\newcounter{examplenum}
\newtheorem{example}{Example}[examplenum]
\newcounter{lemmanum}
\newtheorem{lemma}{Lemma}[lemmanum]
\newcounter{propnum}
\newtheorem{proposition}{Proposition}[propnum]
\newcounter{satznum}
\newtheorem{theorem}{Theorem}[satznum]
\newenvironment{remark}
{\begin{trivlist}\item[]{\bf Remark.}}
	{\end{trivlist}}
\newenvironment{remarks}
{\begin{trivlist}\item[]{\bf Remarks.}}
	{\end{trivlist}}
\newenvironment{proof}
{\begin{trivlist}\item[]{\bf Proof.}}
	{\end{trivlist}}
	\gdef\cz{{\mathbb C}} 
	\gdef\me{{\mathbb E}} 
	\gdef\nz{{\mathbb N}} 
	\gdef\pr{{\mathbb P}} 
	\gdef\rz{{\mathbb R}} 
\newcounter{todocounter}
\def\@MRExtract#1 #2!{#1}
\newcommand{\MR}[1]{
	\xdef\@MRSTRIP{\@MRExtract#1 !}
	\href{http://www.ams.org/mathscinet-getitem?mr=\@MRSTRIP}{MR\@MRSTRIP}}
\begin{document}
    \section*{Scaling limits for a class of regular $\Xi$-coalescents}
	{\sc Martin M\"ohle} and {\sc Benedict Vetter}\footnote{Mathematisches Institut, Eberhard Karls Universit\"at T\"ubingen, Auf der Morgenstelle 10, 72076 T\"ubingen, Germany, E-mail addresses: martin.moehle@uni-tuebingen.de, benedict.vetter@uni-tuebingen.de}
	\begin{center}
		\today
	\end{center}
	\begin{abstract}
\noindent The block counting process with initial state $n$ counts the number of blocks of an exchangeable coalescent ($\Xi$-coalescent) restricted to a sample of size $n$. This work provides scaling limits for the block counting process of regular $\Xi$-coalescents that stay infinite, including $\Xi$-coalescents with dust and a large class of dust-free $\Xi$-coalescents. The main convergence result states that the block counting process, properly logarithmically scaled, converges in the Skorohod space to an Ornstein--Uhlenbeck type process as $n$ tends to infinity. The existence of such a scaling depends on a sort of curvature condition of a particular function well-known from the literature. This curvature condition is intrinsically related to the behavior of the measure $\Xi$ near the origin. The method of proof is to show the uniform convergence of the associated generators. Via Siegmund duality an analogous result for the fixation line is proven. Several examples are studied.

\vspace{2mm}		

		\noindent Keywords: Block counting process; fixation line; Ornstein--Uhlenbeck type process; regular coalescent; simultaneous multiple collisions; time-inhomogeneous process; weak convergence
		
		\vspace{2mm}
		
		\noindent 2020 Mathematics Subject Classification:
		Primary
		60J90 
		Secondary
		60J27 
	\end{abstract}
	
\subsection{Introduction}

Exchangeable coalescents are continuous-time Markov processes taking values in the space $\mathcal{P}$ of partitions of $\nz:=\{1,2,\ldots\}$, where blocks merge over time. Their distribution is determined by a finite measure $\Xi$ on the infinite simplex $\Delta:=\{(u_1,u_2,\ldots):u_1\ge u_2\ge\cdots\ge0,\sum_{i\ge 1} u_i\le1\}$. Coalescents can be constructed from appropriate Poisson point processes (Schweinsberg \cite{schweinsberg00}), which allows to identify the class of exchangeable coalescents with the class of finite measures $\Xi$ on $\Delta$. In the Cannings model \cite{cannings74, cannings75}, a discrete-time haploid population model with nonoverlapping generations and finite constant population size, individuals of the same generation follow an exchangeable reproduction law, independently of the other generations. Start with a sample of individuals in one generation and put members into the same block, when they have a common parent one generation in the past. We obtain a discrete-time partition-valued ancestral process by merging individuals who share a common ancestor when going backwards further in time, and the coagulation of ancestral lineages corresponds to the merging of blocks. Under suitable conditions exchangeable coalescents then arise as the weak limit of these ancestral processes, properly time-scaled, as the total population size tends to infinity, since a certain form of consistency relation holds for Cannings models, see \cite{moehlesagitov01}.

Most coalescents treated in the literature belong to one of the following subclasses. The coalescent $(\Pi_t)_{t\ge0}$, starting from an infinite number of blocks, is said to come down from infinity if the number of blocks is finite at all times $t>0$ almost surely, and it is said to stay infinite if the number of blocks is infinite at all times $t>0$ almost surely. For coalescents with dust the number of original blocks that have not been involved in any merger up to time $t>0$ is infinite with positive probability. Schweinsberg \cite{schweinsberg00} determined conditions to decide on the schemes.

Let $\Lambda$ be a finite measure on the unit interval $[0,1]$. The $\Lambda$-coalescent, which allows only for multiple but not for simultaneous multiple mergers of ancestral lineages, is the particular $\Xi$-coalescent, where the measure $\Xi$ on $\Delta$ is concentrated on $[0,1]\times\{0\}\times\{0\}\cdots$ with
$\Xi(B\times\{0\}\times\{0\}\times\cdots):=\Lambda(B)$ for all Borel sets $B\subseteq[0,1]$.

Suppose that $(\Pi_t)_{t\ge0}$ is standard, i.e., $\Pi_0$ is the partition of $\nz$ into singletons.
For $t\ge0$ and $n\in\nz$ the restriction $\Pi_t^{(n)}:=\{B\cap[n]:B\in\Pi_t,B\cap[n]\ne\emptyset\}$ of $\Pi_t$ to $[n]:=\{1,\ldots,n\}$ has values in the space $\mathcal{P}_n$
of partitions of $[n]$. Suppose that $\Pi^{(n)}:=(\Pi_t^{(n)})_{t\ge0}$ is in a state with $k\in[n]$ blocks. For $j\ge1$, $k_1\ge\cdots\ge k_j$ with $k_1+\cdots+k_j=k$ and $k_1\ge2$ we speak of a $(k_1,\ldots,k_j)$-collision, when $\Pi^{(n)}$ jumps to a state with $j$ blocks and $k_1,\ldots,k_j$ blocks merge into single blocks, respectively. 
We next introduce some standard notation. Define $|u|:=\sum_{i\ge 1} u_i$ and $(u,u):=\sum_{i\ge 1} u_i^2$ for $u\in\Delta$, $0:=(0,0,\ldots)\in\Delta$, $a:=\Xi(\{0\})$, and the measures $\Xi_0$ and $\nu$ via $\Xi=a\varepsilon_{0}+\Xi_0$ and $\nu({\rm d}u):=\Xi_0({\rm d}u)/(u,u)$. 
A $(k_1,\ldots,k_j)$-collision, $j\in\nz$, occurs at the rate (Schweinsberg \cite{schweinsberg00})
\[ \phi_j(k_1,\ldots,k_j)\ =\ a1_{\{j=1,k_1=2\}}\ +\ \int_{\Delta}\sum_{l=0}^{s}\binom{s}{l}(1-|u|)^{s-l}\sum_{i_1\ne\cdots\ne i_{r+l}}u_{i_1}^{k_1}\cdots u_{i_{r+l}}^{k_{r+l}}\,\nu({\rm d}u),
\]
where $s:=|\{i\in[j]:k_i=1\}|$, $r:=j-s$ and $k_1\ge\cdots\ge k_j$ with $k_1\ge2$.

The aim of this work is to analyze the block counting process $N^{(n)}:=(N_t^{(n)})_{t\ge0}:=(|\Pi_t^{(n)}|)_{t\ge0}$ for large initial state, more precisely, to determine scaling functions $v(n,t)$ for which $N_t^{(n)}/v(n,t)$ converges in distribution as $n\to\infty$. For coalescents with dust it is proven in \cite{gaisermoehle16} and \cite{moehle21} with different methods that $(N_t^{(n)}/n)_{t\ge0}$ converges in the space $D_{[0,1]}[0,\infty)$ of c\`adl\`ag paths endowed with the Skorohod topology to the so-called frequency of singletons process as $n\to\infty$. The Bolthausen--Sznitman coalescent in which the driving measure $\Lambda$ is the uniform distribution on $[0,1]$ has been thoroughly studied in the literature and is an example of a dust-free $\Lambda$-coalescent that stays infinite. Goldschmidt and Martin \cite{goldschmidtmartin05} and Baur and Bertoin \cite{baurbertoin15} proved for every $t\ge0$ the almost sure convergence of $N_t^{(n)}/n^{e^{-t}}$ as $n\to\infty$. This almost sure convergence follows from the construction of the Bolthausen--Sznitman coalescent as clusters of path-connected vertices in a random recursive tree by removing edges at random as time evolves. In \cite{moehle15} it is shown via exact moment calculations that $(N_t^{(n)}/n^{e^{-t}})_{t\ge0}$ converges in $D_{[0,\infty)}[0,\infty)$ as $n\to\infty$. In \cite{moehlevetter21}, the authors obtain the convergence of the scaled block counting process in the Skorohod space for a more general class of $\Lambda$-coalescents, where $\Lambda$ is essentially a beta distribution with parameters $1$ and $b>0$.

We extend the results of \cite{moehlevetter21} not only to a larger class of $\Lambda$-coalescents but even to a large class of $\Xi$-coalescents. Our key assumption (\ref{eq_res_kappa}) covers the class of $\Lambda$-coalescents treated in \cite{moehlevetter21}, as shown in Section \ref{sec_ex}.
The coalescents treated in this paper stay infinite, most coalescents with dust are included but many dust-free coalescents are covered as well. The key assumption (\ref{eq_res_kappa}) involves a certain rate function $\gamma$ known from the literature, which roughly speaking describes the expected size of a jump of the block counting process. The main result (Theorem \ref{thm_main}) states that, for a properly chosen scaling $v(n,t)$, the process $(\log N_t^{(n)}-\log v(n,t))_{t\ge0}$ converges in $D_{\rz}[0,\infty)$ as $n\to\infty$ to an Ornstein--Uhlenbeck type process. For information on Ornstein--Uhlenbeck type processes we refer the reader exemplary to \cite{satoyamazato84}.

The work of Limic \cite{limic10} is concerned with the small-time behavior of the block counting process $(N_t)_{t\ge0}:=(|\Pi_t|)_{t\ge 0}$ of $\Xi$-coalescents $(\Pi_t)_{t\ge0}$ that come down from infinity. See also \cite{berestycki10} and \cite{limic15} for $\Lambda$-coalescents. Under the regularity condition (cf. \cite[Eq. (R)]{limic10})
\begin{equation}
   \int_\Delta|u|^2\,\nu({\rm d}u)\ <\ \infty,
   \label{eq_regularity}
\end{equation}
a speed $v(t)$ of coming down of infinity is defined for which $N_t/v(t)$ converges almost surely as $t\to 0+$. The scaling $v(n,t)$ in our main convergence result (Theorem \ref{thm_main}) is defined similarly to the speed $v(t)$.

The fixation line $(L_t)_{t\ge0}$ has been introduced for $\Lambda$-coalescents by H\'enard \cite{henard15} and further studied in \cite{gaisermoehle16} for $\Xi$-coalescents. It can be characterized as the Siegmund dual \cite{siegmund76} of the block counting process satisfying (\cite[Theorem 2.9]{gaisermoehle16})
\begin{equation}
	\pr(L_t^{(m)}\ge n)\ =\ \pr(N_t^{(n)}\le m),\qquad m,n\in\nz,t\ge0,
	\label{eq_intro_duality}
\end{equation}
where the upper indices denote the initial states $L_0^{(m)}=m$ and $N_0^{(n)}=n$, respectively. Theorem \ref{thm_fix_line} states the convergence of the fixation line in the Skorohod space after suitable scaling.

The paper is organized as follows. The results are presented in Section \ref{sec_res}. In Subsection \ref{sec_gamma} the function $\gamma$ and the key assumption (\ref{eq_res_kappa}) are treated. The scaling $v(n,t)$ is defined in Subsection \ref{sec_scaling} and certain properties of the scaling are collected. In Subsection \ref{sec_block_counting} the block counting process is revisited and the main convergence result is stated. Subsection \ref{sec_fix_line} provides the analogous convergence result for the fixation line. Subsection \ref{sec_summary} summarizes the obtained convergence and duality results in non-logarithmic form. Several illustrating examples are provided in Section \ref{sec_ex}, including an example which clarifies the relation to the results in \cite{moehlevetter21} for a class of $\Lambda$-coalescents and including examples of $\Xi$-coalescents with discrete
measure $\Xi$. The proofs are provided in Section \ref{sec_proofs} in the order of appearance of the respective results. The approach to prove the main convergence result is to show the uniform convergence of the associated infinitesimal generators.


\subsection{Results}\label{sec_res}

\subsubsection{The rate function \texorpdfstring{$\gamma$}{gamma}}\label{sec_gamma}

The following function $\gamma$ has been proven to be of great significance to the study of coalescents, see \cite{herrigermoehle} and, although in different form, \cite{limic10} for $\Xi$-coalescents, and \cite{berestycki10, diehlkersting19, diehlkersting19_2, limic15} for $\Lambda$-coalescents. Define $\gamma:[0,\infty)\to\rz$ via
\begin{equation}
   \gamma(x)\ :=\ a\binom{x}{2}\ +\ \int_\Delta\sum_{i\ge 1} \big((1-u_i)^x-1+xu_i\big)\nu({\rm d}u),\qquad x\ge 0.
\label{eq_gamma}
\end{equation}
The main reason why the function $\gamma$ is so important to the study of exchangeable coalescent processes is the fact that, if the coalescent is in a state with $k\in\nz$ blocks, then (see the forthcoming equation (\ref{eq_rate})) $\gamma(k)$ is the expected rate of decrease of the number of blocks.
The properties of $\gamma$ collected in the following lemma are essentially known from the (above cited) literature.
\begin{lemma}
	Let $\gamma$ be defined by (\ref{eq_gamma}). Then $\gamma(0)=\gamma(1)=0$. Moreover, $\gamma(x)>0$ for $x>1$, $\gamma(x)\le x(x-1)(a/2+\Xi_0(\Delta))$ for $x\ge 2$, and $\gamma\in C_{\infty}((0,\infty))$ with derivative
	\[ \gamma'(x)\ =\ a\bigg(x-\frac{1}{2}\bigg)\ +\ \int_\Delta\sum_{i\ge 1}\big((1-u_i)^x\log(1-u_i)+u_i\big)\nu({\rm d}u),\qquad x>0,
	\]
	and higher derivatives
	\[ \gamma^{(k)}(x)\ =\ a\delta_{k2}\ +\ \int_\Delta\sum_{i\ge 1}(1-u_i)^x\big(\log(1-u_i)\big)^k\nu({\rm d}u),\qquad x>0,k\in\nz\setminus\{1\},
	\]
	where $\delta_{kl}$ denotes the Kronecker symbol. The map $x\mapsto\gamma(x)/x$ is strictly increasing on $[1,\infty)$. In particular, the map $\gamma$ is strictly increasing on $[1,\infty)$.
\label{lem_gamma}
\end{lemma}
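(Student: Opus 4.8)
The plan is to establish the seven assertions in the order listed; the only substantive work is a single domination estimate making the integral in \eqref{eq_gamma} absolutely convergent and differentiable under the integral sign, after which everything reduces to substitution, convexity, and one elementary one-variable monotonicity check.

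First I would abbreviate $h_x(v):=(1-v)^x-1+xv$ for $v\in[0,1]$, $x\ge0$ (with the convention $(1-v)^x\log(1-v):=0$ at $v=1$), so that $\gamma(x)=a\binom{x}{2}+\int_\Delta\sum_{i\ge1}h_x(u_i)\,\nu({\rm d}u)$, and use Taylor's theorem in the form $h_x(v)=\binom{x}{2}(1-\xi)^{x-2}v^2$ for some $\xi\in(0,v)$. This gives $|h_x(v)|\le c_xv^2$ for $v\le1/2$, where $c_x:=|\binom{x}{2}|\max(1,2^{2-x})$ stays bounded for $x$ in compact subsets of $(0,\infty)$; summing over the indices $i$ with $u_i\le1/2$ is then dominated by $c_x(u,u)$, which is $\nu$-integrable because $\int_\Delta(u,u)\,\nu({\rm d}u)=\Xi_0(\Delta)<\infty$, while for the at most one index with $u_i>1/2$ one has $|h_x(u_i)|\le1+x$ and $\nu(\{u:\max_i u_i>1/2\})\le4\,\Xi_0(\Delta)<\infty$ (there $(u,u)>1/4$). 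Hence $\gamma$ is finite on $[0,\infty)$. Since the $x$-derivatives of $h_x$, namely $(1-u_i)^x\log(1-u_i)+u_i$ and, for $k\ge2$, $(1-u_i)^x(\log(1-u_i))^k$, admit dominations of exactly the same shape (each is $O(v^2)$ near $v=0$, uniformly for $x$ in compacts, and bounded near $v=1$), the next step is to differentiate under the integral sign arbitrarily often; this produces the stated formulas for $\gamma'$ and $\gamma^{(k)}$ (the summand $-1+xu_i$ and, for $k\ge3$, the term $a\binom{x}{2}$ dropping out), and since each such formula is continuous in $x$ by dominated convergence, $\gamma\in C_\infty((0,\infty))$.

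Substituting $x\in\{0,1\}$ gives $h_0\equiv h_1\equiv0$ and $\binom{x}{2}=0$, so $\gamma(0)=\gamma(1)=0$. For $x\ge1$ the map $v\mapsto h_x(v)$ is convex with $h_x(0)=h_x'(0)=0$, hence $h_x\ge0$ and $\gamma(x)\ge a\binom{x}{2}\ge0$; for $x>1$ strict convexity gives $h_x(v)>0$ on $(0,1)$ and $h_x(1)=x-1>0$, so that $\sum_i h_x(u_i)>0$ for $\nu$-a.e.\ $u$ (recall $\Xi_0(\{0\})=0$), and as $\Xi\ne0$ forces $a>0$ or $\nu\ne0$, this yields $\gamma(x)>0$ for $x>1$. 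For $x\ge2$ the bound $(1-\xi)^{x-2}\le1$ in the Taylor identity gives $0\le h_x(v)\le\binom{x}{2}v^2$, whence $\gamma(x)\le\binom{x}{2}(a+\Xi_0(\Delta))=\tfrac12x(x-1)(a+\Xi_0(\Delta))\le x(x-1)(a/2+\Xi_0(\Delta))$.

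For the monotonicity I would insert the formulas for $\gamma$ and $\gamma'$ into $\frac{{\rm d}}{{\rm d}x}\frac{\gamma(x)}{x}=\frac{x\gamma'(x)-\gamma(x)}{x^2}$; the terms $\pm ax/2$ and $\pm xu_i$ cancel, leaving
\[
\frac{{\rm d}}{{\rm d}x}\frac{\gamma(x)}{x}\ =\ \frac a2\ +\ \frac1{x^2}\int_\Delta\sum_{i\ge1}\Big(1-(1-u_i)^x\big(1-x\log(1-u_i)\big)\Big)\,\nu({\rm d}u).
\]
With $F(w):=w^x(1-x\log w)$ on $(0,1]$ one has $F(1)=1$ and $F'(w)=-x^2w^{x-1}\log w\ge0$, so $F\le1$ and every summand $1-F(1-u_i)$ is $\ge0$, strictly unless $u_i=0$; once more $\Xi\ne0$ makes the right-hand side strictly positive, so $x\mapsto\gamma(x)/x$ is strictly increasing on $(0,\infty)$, in particular on $[1,\infty)$. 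Finally $\gamma'(x)=\frac{\gamma(x)}{x}+x\,\frac{{\rm d}}{{\rm d}x}\frac{\gamma(x)}{x}$ is, for $x\ge1$, the sum of a nonnegative term (vanishing only at $x=1$) and a strictly positive one, so $\gamma'>0$ on $[1,\infty)$ and $\gamma$ is strictly increasing there. The hard part is the domination estimate of the second paragraph — splitting each inner sum at $u_i=1/2$, bounding the small part by $c_x(u,u)$ via Taylor and the large part by a $\nu$-integrable constant, uniformly for $x$ in compact intervals; everything else is elementary substitution, the convexity of $h_x$, and the monotonicity of $F$.
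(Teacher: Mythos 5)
Your proposal is correct and follows essentially the same route as the paper: justify differentiation under the integral sign by a domination that is quadratic in $u_i$, then read off the values at $0$ and $1$, positivity, the upper bound and the monotonicity statements from elementary pointwise inequalities. The only differences are cosmetic: you prove the domination from scratch via Taylor's theorem and a split at $u_i=1/2$ where the paper invokes Lemma 4.1 of \cite{moehle21}, you verify the strict monotonicity of $x\mapsto\gamma(x)/x$ by an explicit derivative computation (your quantity $1-F(1-u_i)$ is exactly the expression whose positivity the paper uses when asserting that $x\mapsto x^{-1}((1-u_i)^x-1)$ is strictly increasing), and you deduce $\gamma'>0$ on $[1,\infty)$ from the monotonicity of $\gamma(x)/x$ rather than the other way around.
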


We now introduce a parameter which will turn out to be of fundamental interest for our purposes. Define
\begin{equation}
    \kappa\ :=\ \lim_{x\to\infty}x\gamma''(x)\ \in\ [0,\infty]
    \label{eq_kappa}
\end{equation}
whenever this limit exists in $[0,\infty]$. In this case we call $\kappa$ the \emph{asymptotic curvature} of $\gamma$ or simply the \emph{curvature parameter} of the underlying $\Xi$-coalescent. Proposition \ref{prop_gamma_equi} shows that (\ref{eq_kappa}) is intrinsically related to the behavior of the measure $\Xi$ near $0\in\Delta$. In Section \ref{sec_ex} the curvature parameter $\kappa$ is computed for several examples.

Let us briefly comment on the coming down from infinity (cdi) property of the
coalescent. Some important coalescents, for example all beta coalescents (see Example \ref{ex_beta}) and all ${\rm NLG}$-coalescents (see Example \ref{ex_nlg}), come down from infinity if and only if $\kappa=\infty$. Note however that, in general, neither $\kappa=\infty$ implies cdi (see Example \ref{ex_five}) nor cdi implies $\kappa=\infty$ (see Example 6.1 b) of \cite{herrigermoehle}).

Lemma \ref{lem_gamma} implies that, up to multiplicative constants, $\gamma(x)$ lies for all sufficiently large $x$ in between $x$ and $x(x-1)$. The key assumption (\ref{eq_res_kappa}) of our convergence theorem (Theorem \ref{thm_main}) is a more precise condition for the growth of $\gamma(x)$, see (\ref{eq_res_L}), and can be compactly stated in terms of the curvature of $\gamma$ as follows.
\begin{equation}
    \mbox{The limit $\kappa$ in (\ref{eq_kappa}) exists and is finite.}
	\label{eq_res_kappa}
\end{equation}
Using Lemma \ref{lem_gamma} it is easily seen that (\ref{eq_res_kappa}) implies that $a:=\Xi(\{0\})=0$. In particular, (\ref{eq_res_kappa}) excludes the Kingman coalescent. We will see in Section \ref{sec_block_counting} that the assumptions of Theorem \ref{thm_main} exclude all coalescents that come down from infinity and only covers coalescents that stay infinite. If Assumption A of \cite{moehlevetter21} holds with $\kappa:=b$, then (\ref{eq_res_kappa}) holds, showing that all
convergence results of \cite{moehlevetter21} are covered by the following convergence theorems.

The following Proposition \ref{prop_gamma_equivalences} provides several conditions, each being equivalent to the key assumption (\ref{eq_res_kappa}). The proof shows that Proposition \ref{prop_gamma_equivalences} holds for any function $\gamma\in C_2((0,\infty))$ such that $\gamma''$ is nonnegative and ultimately nonincreasing.
\begin{proposition}
    The following five conditions are equivalent.
	\begin{enumerate}
		\item[(i)] Assumption (\ref{eq_res_kappa}) holds, i.e., the limit $\kappa:=\lim_{x\to\infty}x\gamma''(x)$ exists and is finite.
		\item[(ii)] $\lim_{x\to\infty}(\gamma'(x)-\gamma(x)/x)=\kappa$.
		\item[(iii)] There exists a function $L:(0,\infty)\to(0,\infty)$ being slowly varying at $\infty$ such that
		\begin{equation}
			\frac{\gamma(x)}{x}\ =\ \kappa\log x\ +\ \log L(x),\qquad x>0.
		\label{eq_res_L}
		\end{equation}
		\item[(iv)] For all $y>0$ the limit $d(y):=\lim_{x\to\infty}(\gamma(yx)/(yx)-\gamma(x)/x)$ exists and $d(y)=\kappa\log y$.
		\item[(v)] $\lim_{x\to\infty}(\gamma'(yx)-\gamma'(x))=\kappa\log y$ for all $y>0$.
	\end{enumerate}
\label{prop_gamma_equivalences}
\end{proposition}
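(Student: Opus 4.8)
The plan is to reformulate everything through the two auxiliary functions $g(x):=\gamma(x)/x$ and $\phi(x):=\gamma'(x)-\gamma(x)/x$. At the outset one records the elementary identities $\phi(x)=xg'(x)$, $x\phi(x)=x\gamma'(x)-\gamma(x)$ and $\frac{{\rm d}}{{\rm d}x}\bigl(x\gamma'(x)-\gamma(x)\bigr)=x\gamma''(x)$, together with the integral representations
\[
   g(yx)-g(x)\ =\ \int_x^{yx}\frac{\phi(t)}{t}\,{\rm d}t
   \qquad\text{and}\qquad
   \gamma'(yx)-\gamma'(x)\ =\ \int_x^{yx}\gamma''(t)\,{\rm d}t,\qquad y>0
\]
(with the obvious sign convention when $y<1$). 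I would then prove the implications (i) $\Rightarrow$ (ii) $\Rightarrow$ (iii) $\Rightarrow$ (iv) $\Rightarrow$ (ii) together with (ii) $\Rightarrow$ (v) $\Rightarrow$ (i); since the common value of $\kappa$ is preserved throughout, this establishes all five equivalences. Morally, (iii) and (iv) are two forms of the same integrated statement, while (ii) and (i) are pointwise strengthenings of (iv) and (v) respectively.

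Four of these six implications are soft and use only $\gamma\in C_2((0,\infty))$. For (i) $\Rightarrow$ (ii): $x\gamma''(x)$ is the derivative of $x\gamma'(x)-\gamma(x)=x\phi(x)$, so $x\gamma''(x)\to\kappa$ forces $\phi(x)=\bigl(x\gamma'(x)-\gamma(x)\bigr)/x\to\kappa$ by the Ces\`aro-type fact that a function whose derivative has a finite limit is asymptotically linear with that slope. For (ii) $\Rightarrow$ (iii): set $L(x):=\exp\bigl(\gamma(x)/x-\kappa\log x\bigr)$, which is positive and continuous, hence measurable; then (\ref{eq_res_L}) holds by construction, and slow variation of $L$ follows from $\log\bigl(L(yx)/L(x)\bigr)=\bigl(g(yx)-g(x)\bigr)-\kappa\log y=\int_x^{yx}\frac{\phi(t)-\kappa}{t}\,{\rm d}t\to0$, using $\phi(t)\to\kappa$ and that $\int_x^{yx}{\rm d}t/t$ stays bounded. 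Then (iii) $\Rightarrow$ (iv) is immediate from (\ref{eq_res_L}), and (ii) $\Rightarrow$ (v) follows by writing $\gamma'(yx)-\gamma'(x)=\bigl(\phi(yx)-\phi(x)\bigr)+\bigl(g(yx)-g(x)\bigr)$, in which the first summand tends to $0$ and the second to $\kappa\log y$ by the same estimate.

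The substance lies in the two converse implications (iv) $\Rightarrow$ (ii) and (v) $\Rightarrow$ (i), which is exactly where the hypotheses ``$\gamma''\ge0$'' and ``$\gamma''$ ultimately nonincreasing'' enter, via a monotone sandwich. For (iv) $\Rightarrow$ (ii): since $\gamma''\ge0$, the map $t\mapsto t\phi(t)=t\gamma'(t)-\gamma(t)$ is nondecreasing, so for $y>1$ sandwiching it between its values at $x$ and at $yx$ inside $g(yx)-g(x)=\int_x^{yx}\frac{t\phi(t)}{t^2}\,{\rm d}t$ (and using $\int_x^{yx}{\rm d}t/t^2=(y-1)/(yx)$) gives
\[
   \frac{y-1}{y}\,\phi(x)\ \le\ g(yx)-g(x)\ \le\ (y-1)\,\phi(yx);
\]
taking $\limsup$ and $\liminf$ as $x\to\infty$, invoking (iv), and then letting $y\downarrow1$ with $\log y\sim y-1$ pins down $\phi(x)\to\kappa$. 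For (v) $\Rightarrow$ (i): on a half-line on which $\gamma''$ is nonincreasing one has, for $y>1$, $(y-1)x\,\gamma''(yx)\le\gamma'(yx)-\gamma'(x)\le(y-1)x\,\gamma''(x)$, whence (v) gives $\liminf_{x\to\infty}x\gamma''(x)\ge\kappa\log y/(y-1)$ and $\limsup_{x\to\infty}x\gamma''(x)=\limsup_{x\to\infty}(yx)\gamma''(yx)\le y\kappa\log y/(y-1)$; letting $y\downarrow1$ concludes.

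I expect the main obstacle to be precisely these two sandwich steps, and, more conceptually, realizing that they are unavoidable: for an arbitrary slowly varying $L$ nothing can be said about $x(\log L)'(x)$, so the ``averaged'' conditions (iii) and (iv) are strictly weaker than the ``pointwise'' conditions (i) and (ii) unless the one-sided information on $\gamma''$ is fed back in — the monotone sandwich being the mechanism that upgrades an integrated statement to a pointwise one. The remaining points are routine: the passage $y\downarrow1$, for which one uses $\log y/(y-1)\to1$, and the remark that the $\limsup$ and $\liminf$ of $\phi$, respectively of $x\mapsto x\gamma''(x)$, are invariant under the dilation $x\mapsto yx$.
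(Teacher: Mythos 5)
Your proof is correct, and most of it runs parallel to the paper's: the Ces\`aro argument for (i) $\Rightarrow$ (ii) (the paper writes $\gamma'(x)-\gamma(x)/x=x^{-1}(\int_1^x u\gamma''(u)\,{\rm d}u+\gamma'(1))$, which is your statement that the primitive of $x\gamma''(x)$ is asymptotically linear), the trivial step (iii) $\Rightarrow$ (iv), and the monotone sandwich for (v) $\Rightarrow$ (i) (the paper phrases it via the mean value theorem, $\gamma'(yx)-\gamma'(x)=x(y-1)\gamma''(x\xi)$ with $\xi$ between $1$ and $y$, but the mechanism — nonnegativity plus ultimate monotonicity of $\gamma''$, then $y\downarrow 1$ — is identical to your integral bounds). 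The genuine difference lies in the Tauberian direction among (ii), (iii), (iv). The paper sets $g(x):=\exp(\gamma(x)/x)$, checks that $x^2g'(x)$ is nondecreasing because its derivative equals $g(x)\bigl((\gamma'(x)-\gamma(x)/x)^2+x\gamma''(x)\bigr)\ge 0$, and then invokes Lamperti's theorem \cite[Theorem 2]{lamperti58} to get (ii) $\Leftrightarrow$ (iii) in one stroke, with (iii) $\Leftrightarrow$ (iv) absorbed into the characterization of regular variation. You instead isolate the only nontrivial implication, (iv) $\Rightarrow$ (ii), and prove it by hand: the monotonicity of $t\mapsto t\gamma'(t)-\gamma(t)$ (equivalent to $\gamma''\ge 0$) gives $\frac{y-1}{y}\phi(x)\le g(yx)-g(x)\le (y-1)\phi(yx)$, and $y\downarrow 1$ pins down $\phi(x)\to\kappa$. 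This is in effect a self-contained proof of the special case of Lamperti's theorem that the paper uses, so your route is more elementary and makes visible exactly where the one-sided information on $\gamma''$ enters; the paper's route is shorter on the page at the cost of an external citation. Your direct verifications of (ii) $\Rightarrow$ (iii) via $\log(L(yx)/L(x))=\int_x^{yx}(\phi(t)-\kappa)t^{-1}\,{\rm d}t$ and of (ii) $\Rightarrow$ (v) via $\gamma'(yx)-\gamma'(x)=(\phi(yx)-\phi(x))+(g(yx)-g(x))$ are sound, the value $\kappa$ is tracked consistently around the cycle, and the chain (i) $\Rightarrow$ (ii) $\Rightarrow$ (iii) $\Rightarrow$ (iv) $\Rightarrow$ (ii) together with (ii) $\Rightarrow$ (v) $\Rightarrow$ (i) does close all five equivalences.
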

\begin{remarks}
	\begin{enumerate}
		\item Assume that the coalescent has dust. Equivalently, $\lim_{x\to\infty}\gamma(x)/x=\int_\Delta|u|\nu({\rm d}u)=:\mu<\infty$. Thus, (\ref{eq_res_L}) and, hence, all conditions of Proposition \ref{prop_gamma_equivalences} hold with $\kappa=0$ and a slowly varying function $L$ satisfying $\lim_{x\to\infty}L(x)=e^\mu<\infty$.
		Note however that there exist dust-free coalescents (even $\Lambda$-coalescents) which satisfy $\kappa:=\lim_{x\to\infty}x\gamma''(x)=0$. We refer the reader to Examples \ref{ex_nlg} and \ref{ex_particular} in Section \ref{sec_ex}.
		\item The characterization theorem for regularly varying functions \cite[Theorem 1.4.1]{binghamgoldieteugels} implies that the limit $d(y)$ in Proposition \ref{prop_gamma_equivalences} (iv) is necessarily of the form $d(y)=\kappa\log y$, $y>0$, for some $\kappa\in\rz$, if it exists, and due to $\lim_{x\to\infty}\gamma(x)/x=\int_{\Delta}|u|\nu({\rm d}u)\in[0,\infty]$, only $\kappa\ge0$ can occur.
		\item In the terminology of \cite[Section 3]{binghamgoldieteugels}, the function $\gamma'$ is a de Haan function with $1$-index $\kappa$.
	\end{enumerate}
\end{remarks}
Proposition \ref{prop_gamma_equivalences} provides conditions being equivalent to
the key assumption (\ref{eq_res_kappa}). However, all these conditions involve the rate function $\gamma$. Proposition \ref{prop_gamma_equi} below provides two additional equivalent conditions of assumption (\ref{eq_res_kappa}), 
which do not involve the rate function $\gamma$ anymore and are instead more directly stated in terms of the measure $\Xi$ of the coalescent and hence more intuitive to understand. Proposition \ref{prop_gamma_equi} essentially shows how (\ref{eq_res_kappa}) is related to the behavior of the measure $\Xi$ near the point $0\in\Delta$. In order to state the result, let us introduce the functions $F,F_1,F_2,\ldots:[0,1)\to[0,\infty)$ and
   $G,G_1,G_2,\ldots:[0,1]\to[0,\infty)$ via
   \[
   \begin{array}{lcl}
   F_i(t) & := & \displaystyle\int_\Delta 1_{[0,t]}(u_i)(\log(1-u_i))^2\,\nu({\rm d}u),\qquad i\in\nz,t\in[0,1),\\
   F(t) &:= & \displaystyle\sum_{i\ge 1} F_i(t)
   \ =\ \int_\Delta \sum_{i\ge 1}1_{[0,t]}(u_i)(\log(1-u_i))^2\,\nu({\rm d}u),
   \qquad t\in[0,1),\\
   G_i(t) & := & \displaystyle\int_\Delta 1_{[0,t]}(u_i)u_i^2\,\nu({\rm d}u),\qquad i\in\nz,t\in [0,1],\\
   G(t) & := & \displaystyle\sum_{i\ge 1} G_i(t)
   \ =\ \int_\Delta \sum_{i\ge 1}1_{[0,t]}(u_i)u_i^2\,\nu({\rm d}u),\qquad t\in[0,1].
   \end{array}
   \]
   Note that $F_i(0)=G_i(0)=0$ for all $i\in\nz$ and, hence, $F(0)=G(0)=0$. For
   every $t\in(0,1)$ there exists a constant $C_t\in(0,\infty)$ (choose, for example, $C_t:=(-\log(1-t))/t$) such that $-\log(1-x)\le C_tx$ for all $x\in[0,t]$. Applying this inequality with $x:=u_i\le t$ yields
   \begin{eqnarray*}
      F_i(t)
      & \le & F(t)
      \ =\ \int_\Delta\sum_{i\ge 1}1_{[0,t]}(u_i)(-\log(1-u_i))^2\,\nu({\rm d}u)\\
      & \le & C_t^2\int_\Delta \sum_{i\ge 1} 1_{[0,t]}(u_i)u_i^2\,\nu({\rm d}u)
      \ \le\ C_t^2\int_\Delta\sum_{i\ge 1}u_i^2\,\nu({\rm d}u)
      \ =\ C_t^2\Xi(\Delta)\ <\ \infty.
   \end{eqnarray*}
   Obviously, $G_i(t)\le G(t)\le\int_\Delta\sum_{i\ge 1}u_i^2\nu({\rm d}u)=\Xi(\Delta)<\infty$.
   From $u_i\le -\log(1-u_i)$ we conclude that $G_i(t)\le F_i(t)$ for all $i\in\nz$ and $t\in[0,1)$ and, hence, $G(t)\le F(t)$ for all $t\in[0,1)$. Moreover, the functions $F,G,F_1,G_1,F_2,G_2,\ldots$ are nondecreasing, hence Riemann integrable.
\begin{proposition} \label{prop_gamma_equi}
   Let $\Xi$ be a finite measure on $\Delta$ and let $\kappa$ be some constant in $[0,\infty)$. Then the following three conditions are equivalent.
   \begin{enumerate}
      \item[(i)] Assumption (\ref{eq_res_kappa}) holds, i.e., the limit $\kappa=\lim_{x\to\infty}x\gamma''(x)$ exists and is finite.
      \item[(ii)] $\lim_{t\to 0+}t^{-1}F(t)=\kappa$.\qquad
            (iii) $\lim_{t\to 0+}t^{-1}G(t)=\kappa$.
   \end{enumerate}
   In particular, for $\Lambda$-coalescents, (\ref{eq_res_kappa}) is equivalent to
   \begin{equation} \label{lambda}
      \lim_{t\to0+}\frac{\Lambda([0,t])}{t}\ =\ \kappa.
   \end{equation}
\end{proposition}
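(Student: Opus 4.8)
The plan is to reduce all three conditions to a single statement about a Laplace transform and then to invoke the Karamata Tauberian theorem. I would first record, using Lemma \ref{lem_gamma}, that
\[
\gamma''(x)\ =\ a\ +\ g(x),\qquad g(x)\ :=\ \int_\Delta\sum_{i\ge1}(1-u_i)^x\big(\log(1-u_i)\big)^2\,\nu({\rm d}u),\qquad x>0,
\]
with $g\ge0$ and $g(x)<\infty$ since $\gamma\in C_\infty((0,\infty))$. As $x\gamma''(x)\ge ax$, condition (i) forces $a=\Xi(\{0\})=0$, and I carry out the argument under this reduction, so that $\kappa=\lim_{x\to\infty}x\gamma''(x)=\lim_{x\to\infty}xg(x)$. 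Next I introduce the Borel measure $\rho$ on $(0,\infty)$ given by
\[
\rho(B)\ :=\ \int_\Delta\sum_{i\ge1}1_B\big(-\log(1-u_i)\big)\big(\log(1-u_i)\big)^2\,\nu({\rm d}u),\qquad B\subseteq(0,\infty)\ \mbox{Borel},
\]
which is locally finite on $(0,\infty)$ by the bound on $F$ displayed before the proposition. The change of variables $h=-\log(1-u_i)$ then shows at once that $g(x)=\int_{(0,\infty)}e^{-xh}\,\rho({\rm d}h)$ is the Laplace transform of $\rho$ (finite for every $x>0$) and that $F(t)=\rho((0,-\log(1-t)])$ for $t\in[0,1)$.

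For the equivalence (i)$\Leftrightarrow$(ii): since $(-\log(1-t))/t\to1$ as $t\to0+$, the condition $\lim_{t\to0+}t^{-1}F(t)=\kappa$ is equivalent to $\rho((0,h])\sim\kappa h$ as $h\to0+$ (with "$\sim$" read as "$=o$" when $\kappa=0$). By the Karamata Tauberian theorem \cite[Theorem 1.7.1]{binghamgoldieteugels}, applied with regular variation index $1$, so that the relevant constant is $\Gamma(2)=1$, this is in turn equivalent to $g(x)=\int_{(0,\infty)}e^{-xh}\,\rho({\rm d}h)\sim\kappa x^{-1}$ as $x\to\infty$, i.e. to $\lim_{x\to\infty}xg(x)=\kappa$, which under $a=0$ is precisely (i). The genuinely Tauberian implication here is (i)$\Rightarrow$(ii); it needs no extra side condition because $h\mapsto\rho((0,h])$ is nondecreasing.

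For (ii)$\Leftrightarrow$(iii) I use the two-sided estimate already prepared in the text: $u_i\le-\log(1-u_i)$ gives $G\le F$, and the convexity of $x\mapsto-\log(1-x)$ gives $-\log(1-u_i)\le C_tu_i$ for $u_i\in[0,t]$ with $C_t:=(-\log(1-t))/t$, hence $F(t)\le C_t^2G(t)$; thus $t^{-1}G(t)\le t^{-1}F(t)\le C_t^2\,t^{-1}G(t)$ for $t\in(0,1)$, and since $C_t\to1$ as $t\to0+$ the conditions (ii) and (iii) hold together with the same $\kappa$. For the $\Lambda$-coalescent specialization, $u=(u_1,0,0,\ldots)$ and $(u,u)=u_1^2$, so $\nu$ on this line corresponds to $x^{-2}(\Lambda-a\varepsilon_0)({\rm d}x)$ on $(0,1]$ and $G(t)=\int_{(0,1]}1_{[0,t]}(x)\,x^2\,x^{-2}\,(\Lambda-a\varepsilon_0)({\rm d}x)=\Lambda((0,t])$; so (iii) reads $\lim_{t\to0+}\Lambda((0,t])/t=\kappa$, which, recalling that (i) forces $a=\Lambda(\{0\})=0$, coincides with $\lim_{t\to0+}\Lambda([0,t])/t=\kappa$, i.e. (\ref{lambda}).

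The step I expect to require the most care is (i)$\Leftrightarrow$(ii): recognizing $\gamma''-a$ as the Laplace transform of the measure $\rho$ that also governs $F$, checking the hypotheses of the Tauberian theorem (local finiteness of $\rho$ and finiteness of its transform, both inherited from Lemma \ref{lem_gamma} and the displayed bound on $F$), fixing the regular variation index at $1$ so that the Gamma factor disappears, and dealing with the boundary value $\kappa=0$. The change of variables, the sandwich $G\le F\le C_t^2G$, and the $\Lambda$-coalescent computation are routine.
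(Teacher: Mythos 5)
Your proof is correct and follows essentially the same route as the paper: both recognize $\gamma''$ as a Laplace transform tied to $F$ via the substitution $t=1-e^{-h}$ and settle (i)$\Leftrightarrow$(ii) by a Karamata-type Tauberian theorem, then obtain (ii)$\Leftrightarrow$(iii) from the sandwich $G\le F\le C_t^2G$ with $C_t\to1$. The only cosmetic difference is that the paper writes $\gamma''(x)/x$ as the transform of the density $t\mapsto F(1-e^{-t})$ and applies Feller's Tauberian theorem with index $2$ followed by a monotone density argument, whereas you apply \cite[Theorem 1.7.1]{binghamgoldieteugels} with index $1$ directly to the measure $\rho$ whose distribution function is $h\mapsto F(1-e^{-h})$, which is slightly more direct and handles $\kappa=0$ uniformly.
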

Note that, if $0<\kappa<\infty$, then Proposition \ref{prop_gamma_equi} shows that, for $\Lambda$-coalescents, (\ref{eq_res_kappa}) is equivalent to the property that the measure defining function $t\mapsto\Lambda([0,t])$ of the measure $\Lambda$ is regularly varying at $0$ with index $1$. 
Relation (\ref{lambda}) already appears in Lemma 9.1 of \cite{moehlevetter21}, but its importance was not (fully) discovered there.

\subsubsection{The scaling function}\label{sec_scaling}

Define $v:[1,\infty)\times[0,\infty)\to[1,\infty)$ (implicitly) via
\begin{equation}
    v(1,t)\ :=\ 1\quad\mbox{and}\quad
	\int_{v(x,t)}^{x}\frac{{\rm d}u}{\gamma(u)}\ =\ t,\qquad x>1,t\ge 0.
\label{eq_integral_normalizing}
\end{equation}
The following two propositions clarify the existence of $v$ and provide basic properties of $v$ with an emphasis on coalescents with dust, coalescents that come down from infinity and coalescents that satisfy the key assumption (\ref{eq_res_kappa}).
\begin{proposition}
    For each $x>1$ and $t\ge 0$ the solution $v(x,t)\in(1,x]$ to the integral equation in (\ref{eq_integral_normalizing}) exists and is unique. Moreover, $v\in C_1((1,\infty)\times[0,\infty))$ with
	\begin{equation}
		\frac{{\rm d}}{{\rm d}t}v(x,t)\ =\ -\gamma(v(x,t)),\quad\frac{{\rm d}}{{\rm d}x}v(x,t)\ =\ \frac{\gamma(v(x,t))}{\gamma(x)},\qquad x>1,t\ge0.
	\label{eq_res_normalizing_derivative}
	\end{equation}
	For every $x\ge 1$ the map $t\mapsto v(x,t)$, $t\ge 0$, is nonincreasing and for every $t\ge 0$ the map $x\mapsto v(x,t)$, $x\ge 1$, is nondecreasing.
\label{prop_res_normalizing}
\end{proposition}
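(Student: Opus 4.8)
The plan is to collapse the implicit definition of $v$ into the inversion of a single strictly increasing smooth function, and then to extract existence, uniqueness, regularity, the two derivative formulas and the monotonicity statements all at once from the inverse function theorem.

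First I would fix some $x_0>1$ and set $\Phi(y):=\int_{x_0}^y{\rm d}u/\gamma(u)$ for $y>1$. By Lemma~\ref{lem_gamma} the integrand $1/\gamma$ is positive and belongs to $C_\infty((1,\infty))$, so $\Phi$ is strictly increasing with $\Phi\in C_\infty((1,\infty))$ and $\Phi'=1/\gamma$. The one genuinely non-routine point, which I expect to be the main obstacle, is to verify that $\Phi(1+)=-\infty$, i.e.\ that $\int_1^{x_0}{\rm d}u/\gamma(u)=\infty$; this divergence at the lower endpoint is precisely what guarantees that a solution $v(x,t)$ exists for \emph{all} $t\ge0$ and never descends to the value $1$. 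To establish it I would use that $\gamma(1)=0$ and that $\gamma$ is $C_1$ near $1$, hence locally Lipschitz there, so $\gamma(u)=\gamma(u)-\gamma(1)\le C(u-1)$ for $u$ close to $1$, whence $1/\gamma(u)\ge(C(u-1))^{-1}$ fails to be integrable at $1$. Since $\Phi(\infty):=\lim_{y\to\infty}\Phi(y)\in(0,\infty]$ exists by monotonicity, $\Phi$ is then a $C_\infty$-diffeomorphism from $(1,\infty)$ onto $(-\infty,\Phi(\infty))$.

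Next I would rewrite the defining relation $\int_{v(x,t)}^x{\rm d}u/\gamma(u)=t$ as $\Phi(v(x,t))=\Phi(x)-t$. For $x>1$ and $t\ge0$ one has $-\infty<\Phi(x)-t\le\Phi(x)<\Phi(\infty)$, so $\Phi(x)-t$ lies in the range of $\Phi$ and has exactly one preimage; this gives both existence and uniqueness, namely $v(x,t):=\Phi^{-1}(\Phi(x)-t)$. Monotonicity of $\Phi$ together with $t\ge0$ yields $\Phi(v(x,t))\le\Phi(x)$, hence $v(x,t)\le x$, while $v(x,t)>1$ automatically, so $v(x,t)\in(1,x]$ (with $v(x,0)=x$). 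Regularity is then immediate, since $\Phi^{-1}\in C_\infty$ and $(x,t)\mapsto\Phi(x)-t$ is $C_\infty$: thus $v\in C_\infty((1,\infty)\times[0,\infty))$, in particular $v\in C_1((1,\infty)\times[0,\infty))$.

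Finally I would differentiate $v=\Phi^{-1}(\Phi(x)-t)$ using $(\Phi^{-1})'=1/(\Phi'\circ\Phi^{-1})=\gamma\circ\Phi^{-1}$. This gives $\tfrac{{\rm d}}{{\rm d}t}v(x,t)=-\gamma(v(x,t))$ and $\tfrac{{\rm d}}{{\rm d}x}v(x,t)=\Phi'(x)/\Phi'(v(x,t))=\gamma(v(x,t))/\gamma(x)$, which are the two asserted formulas. Monotonicity then drops out: for $x>1$ we have $\tfrac{{\rm d}}{{\rm d}t}v(x,t)=-\gamma(v(x,t))\le0$ because $\gamma\ge0$ on $[1,\infty)$, so $t\mapsto v(x,t)$ is nonincreasing, and for $x=1$ it is constantly $1$; likewise $\tfrac{{\rm d}}{{\rm d}x}v(x,t)=\gamma(v(x,t))/\gamma(x)\ge0$ shows $x\mapsto v(x,t)$ is nondecreasing on $(1,\infty)$, and the inequality $v(x,t)>1=v(1,t)$ for $x>1$ extends this to all of $[1,\infty)$.
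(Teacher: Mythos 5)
Your proof is correct and follows essentially the same route as the paper: both hinge on showing that $\int_{\cdot}^{x}{\rm d}u/\gamma(u)$ diverges as the lower limit tends to $1+$ (you via local Lipschitzness of $\gamma$ at $1$ with $\gamma(1)=0$, the paper via the explicit bound $\gamma(u)\le Cu(u-1)$ from Lemma~\ref{lem_gamma}), then invert a strictly monotone antiderivative of $1/\gamma$ and read off the derivative formulas and monotonicity. Your single fixed-basepoint $\Phi$ with $v(x,t)=\Phi^{-1}(\Phi(x)-t)$ is a mild streamlining of the paper's family $F_x(y)=\int_y^x{\rm d}u/\gamma(u)$ that makes the joint $C_1$ (indeed $C_\infty$) regularity slightly more transparent, but it is not a genuinely different argument.
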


\begin{remark} 
	If the coalescent is in a state with $k\in\nz$ blocks, then $\gamma(k)$ is the expected rate of decrease of the block counting process. The choice of the scaling $v(x,t)$ then becomes plausible as, for each $x\ge 1$, it is the solution to the initial value problem
\begin{equation}
	\frac{{\rm d}}{{\rm d}t}v(x,t)\ =\ -\gamma(v(x,t)),\qquad t\ge 0,\qquad v(x,0)\ =\ x.
\end{equation}
\end{remark}
\begin{proposition}
	Let $\gamma$ be defined by (\ref{eq_gamma}) and let $v$ be defined by (\ref{eq_integral_normalizing}).
	\begin{enumerate}
		\item[(i)] If the coalescent has dust, i.e., $a:=\Xi(\{0\})=0$ and $\mu:=\int_\Delta|u|\nu({\rm d}u)<\infty$, then $v(x,t)\sim xe^{-\mu t}$ as $x\to\infty$ for every $t\ge0$.
		\item[(ii)] Suppose that $\int_{c}^{\infty}(\gamma(u))^{-1}{\rm d}u<\infty$ for some (and hence all) $c>1$. Then, for every $t>0$, the solution $v(t)\in(1,\infty)$ to the equation
		\begin{equation}
			\int_{v(t)}^{\infty}\frac{{\rm d}u}{\gamma(u)}\ =\ t
		\label{eq_res_speed_cdi}
		\end{equation}
		exists and $\lim_{x\to\infty}v(x,t)=v(t)$.
		\item[(iii)] Suppose that (\ref{eq_res_kappa}) holds. Then, for every $t\ge 0$, there exists a slowly varying function $L_t:[1,\infty)\to(0,\infty)$ such that $v(x,t)=x^{e^{-\kappa t}}L_t(x)$ for all $x\ge1$.
	\end{enumerate}
	\label{prop_normalizing}
\end{proposition}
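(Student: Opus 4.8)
The plan is to read off all three statements from the defining relation $\int_{v(x,t)}^{x}{\rm d}u/\gamma(u)=t$ in (\ref{eq_integral_normalizing}), using the qualitative information on $\gamma$ from Lemma \ref{lem_gamma} and Proposition \ref{prop_gamma_equivalences}, together with the monotonicity of $v$ from Proposition \ref{prop_res_normalizing}. For (i) I would argue as follows. By Lemma \ref{lem_gamma} the map $u\mapsto\gamma(u)/u$ increases on $[1,\infty)$, and under the dust assumption its limit equals $\mu$ (first remark after Proposition \ref{prop_gamma_equivalences}); hence $\gamma(u)\le\mu u$ for $u\ge1$, which inserted into the integral equation gives $t\ge\mu^{-1}\log(x/v(x,t))$, i.e.\ $v(x,t)\ge xe^{-\mu t}$ for all $x>1$. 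For the reverse inequality one notes that $\gamma(u)\le\mu u$ forces $\int_c^{\infty}{\rm d}u/\gamma(u)=\infty$, so $v(x,t)\to\infty$ as $x\to\infty$; then, for fixed $\varepsilon\in(0,\mu)$ and $u$ large enough that $\gamma(u)\ge(\mu-\varepsilon)u$, the integral equation yields $v(x,t)\le xe^{-(\mu-\varepsilon)t}$ for all large $x$, and letting $\varepsilon\downarrow0$ gives $v(x,t)\sim xe^{-\mu t}$.

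For (ii), set $\Psi(y):=\int_y^{\infty}{\rm d}u/\gamma(u)$ for $y>1$. Since $\gamma>0$ on $(1,\infty)$, the function $\Psi$ is continuous and strictly decreasing with $\Psi(y)\downarrow0$ as $y\to\infty$; a second order Taylor expansion of the convex function $\gamma$ at $1$ (using $\gamma(1)=0$ and continuity of $\gamma''$ near $1$) gives $\gamma(u)=O((u-1)^2)$, or $O(u-1)$ if $\gamma'(1)>0$, as $u\downarrow1$, hence $\Psi(y)\uparrow\infty$ as $y\downarrow1$. Thus $\Psi\colon(1,\infty)\to(0,\infty)$ is a homeomorphism and $v(t):=\Psi^{-1}(t)\in(1,\infty)$ exists for every $t>0$. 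Finally $\Psi(v(x,t))=t+\int_x^{\infty}{\rm d}u/\gamma(u)\downarrow t$ as $x\to\infty$; since $x\mapsto v(x,t)$ is nondecreasing and $\le v(t)$ (because $\Psi(v(x,t))\ge t=\Psi(v(t))$ and $\Psi$ decreases), it converges, and continuity of $\Psi$ identifies its limit as $v(t)$.

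For (iii) I would pass to logarithmic variables. With $G(p):=\gamma(e^p)/e^p$ and $\sigma(S,t):=\log v(e^S,t)$, relation (\ref{eq_integral_normalizing}) reads $\int_{\sigma(S,t)}^{S}{\rm d}p/G(p)=t$; here $G$ is positive and nondecreasing (Lemma \ref{lem_gamma}), $G(p)/p\to\kappa$ (Proposition \ref{prop_gamma_equivalences}(iii)), $G'(p)=\gamma'(e^p)-\gamma(e^p)/e^p\to\kappa$ (Proposition \ref{prop_gamma_equivalences}(ii)), and $S\mapsto\sigma(S,t)$ is nondecreasing (Proposition \ref{prop_res_normalizing}). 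Step 1: $\sigma(S,t)\to\infty$ as $S\to\infty$, for otherwise $\sigma(S,t)$ converges by monotonicity and the left side of the integral equation tends to $\int^{\infty}{\rm d}p/G(p)=\infty$ (divergent since $G(p)\le(\kappa+1)p$ eventually), a contradiction. Step 2: $\sigma(S,t)/S\to e^{-\kappa t}$; for $\kappa>0$ integrate the bounds $((\kappa+\varepsilon)p)^{-1}\le1/G(p)\le((\kappa-\varepsilon)p)^{-1}$, valid for large $p$, from $\sigma(S,t)$ to $S$ and let $\varepsilon\downarrow0$, and for $\kappa=0$ use $S-\sigma(S,t)\le tG(S)=o(S)$, which holds since $G$ is nondecreasing. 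Step 3: slow variation of $L_t(x):=v(x,t)/x^{e^{-\kappa t}}$, equivalently $\sigma(S+a,t)-\sigma(S,t)\to e^{-\kappa t}a$ for all $a\ge0$. Subtracting the integral equations at $e^{S}$ and $e^{S+a}$ gives $\int_{S}^{S+a}{\rm d}p/G(p)=\int_{\sigma(S,t)}^{\sigma(S+a,t)}{\rm d}p/G(p)$, and bounding each integrand between its endpoint values by monotonicity of $G$ yields, with $\Delta:=\sigma(S+a,t)-\sigma(S,t)$,
\[
 \frac{a\,G(\sigma(S,t))}{G(S+a)}\ \le\ \Delta\ \le\ \frac{a\,G(\sigma(S+a,t))}{G(S)} .
\]
Both outer ratios tend to $e^{-\kappa t}$: factoring, say, $G(\sigma(S,t))/G(S+a)=\big(G(\sigma(S,t))/G(S)\big)\big(G(S)/G(S+a)\big)$, one needs $G(\sigma(S,t))/G(S)\to e^{-\kappa t}$ and $G(S+a)/G(S)\to1$. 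For $\kappa>0$ both follow from $G(p)\sim\kappa p$ and Step 2; for $\kappa=0$ one uses $G'\to0$, namely $0\le\log\big(G(S)/G(\sigma(S,t))\big)=\int_{\sigma(S,t)}^{S}\frac{G'(p)}{G(p)}\,{\rm d}p\le t\,\sup_{p\ge\sigma(S,t)}G'(p)\to0$ (the integral $\int_{\sigma(S,t)}^{S}{\rm d}p/G(p)$ being the fixed constant $t$), together with $G(S+a)-G(S)=\int_{S}^{S+a}G'(p)\,{\rm d}p\to0$ while $G(S)$ stays bounded away from $0$. Hence $\Delta\to e^{-\kappa t}a$, so the continuous positive function $L_t$ is slowly varying, and $v(x,t)=x^{e^{-\kappa t}}L_t(x)$ holds for all $x\ge1$ by definition of $L_t$.

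I expect the only substantial obstacle to be Step 3 of part (iii) in the regime $\kappa=0$ with $\mu=\int_\Delta|u|\,\nu({\rm d}u)=\infty$, i.e.\ dust-free coalescents of zero curvature: there $G(p)\to\infty$ strictly more slowly than linearly and need not be regularly varying, so $G$ cannot be compared at the nearby but non-proportional points $\sigma(S,t)$ and $S$ by a naive regular-variation argument. The device that resolves this is the identity $\log\big(G(S)/G(\sigma(S,t))\big)=\int_{\sigma(S,t)}^{S}\frac{G'(p)}{G(p)}\,{\rm d}p$, controlled by $G'\to0$ — exactly Proposition \ref{prop_gamma_equivalences}(ii) specialised to $\kappa=0$ — against the bounded integral $\int_{\sigma(S,t)}^{S}{\rm d}p/G(p)=t$. (When $\mu<\infty$ the case $\kappa=0$ is easy, since then $G$ converges to $\mu>0$; and $\kappa>0$ is easy since $G$ is asymptotically linear.)
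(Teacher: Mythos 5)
Your proposal is correct. Parts (i) and (ii) follow essentially the paper's own route: the same two-sided comparison of $\gamma(u)$ with $\mu u$ on $[v(x,t),x]$ for (i) (you are in fact slightly more careful than the paper in first securing $v(x,t)\to\infty$ from the lower bound $v(x,t)\ge xe^{-\mu t}$), and for (ii) the same analysis of $y\mapsto\int_y^\infty{\rm d}u/\gamma(u)$ as a decreasing homeomorphism of $(1,\infty)$ onto $(0,\infty)$, with the limit identified by continuity and injectivity. Part (iii) is where you genuinely diverge. The paper computes the logarithmic derivative $x\,\tfrac{\rm d}{{\rm d}x}v(x,t)/v(x,t)$ via (\ref{eq_res_normalizing_derivative}), shows it converges to $e^{-\kappa t}$ (splitting into $\kappa>0$ and $\kappa=0$), and then invokes a ``variant at infinity'' of Lamperti's Theorem 2 to conclude regular variation; you instead pass to logarithmic coordinates and verify the Karamata limit $\sigma(S+a,t)-\sigma(S,t)\to e^{-\kappa t}a$ directly, by equating the two increments of $\int{\rm d}p/G(p)$ and sandwiching with the monotonicity of $G=\gamma(\exp(\cdot))/\exp(\cdot)$. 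Your approach buys self-containedness: it avoids the unproved converse-Lamperti citation, and your treatment of the delicate dust-free $\kappa=0$ case — controlling $\log\big(G(S)/G(\sigma(S,t))\big)=\int_{\sigma(S,t)}^S G'/G\,{\rm d}p$ by $t\sup G'$ against the fixed total $\int_{\sigma(S,t)}^S{\rm d}p/G(p)=t$ — is the same underlying estimate the paper uses (there phrased as $xL'(x)/L(x)=\gamma'(x)-\gamma(x)/x\to0$ integrated along the flow in $s$), just integrated in the spatial rather than the time variable. The paper's version is shorter once the cited regular-variation theorem is granted; yours is longer but elementary, and both rest on exactly the same inputs from Lemma \ref{lem_gamma} and Proposition \ref{prop_gamma_equivalences}.
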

\begin{remarks}
	\begin{enumerate}
		\item If the coalescent has dust, then, as $n\to\infty$, $N^{(n)}/n$ converges in $D_{[0,1]}[0,\infty)$ to the so-called frequency of singletons process \cite{gaisermoehle16}, so $(v(n,t))_{n\in\nz}$ as in (i) is a reasonable scaling sequence for the block counting process.
		\item For regular $\Xi$-coalescents that come down from infinity the integral $\int_c^{\infty}(\gamma(u))^{-1}{\rm d}u$ is finite for all $c>1$. The function $v(t)$ defined by (\ref{eq_res_speed_cdi}) is the `speed of coming down from infinity' as defined in \cite{limic10}, although with a slightly different function $\gamma$. See also \cite{berestycki10} and \cite{limic15}.
   \item The finiteness of the integral $\int_c^\infty(\gamma(u))^{-1}{\rm d}u$ can be viewed as a Grey's condition for the $\Xi$-coalescent. Grey's condition originally stems
       (see \cite{grey74, silverstein68, silverstein69}) from the study of continuous-state branching processes with the function $\gamma$ replaced by the branching mechanism of the considered branching process. For $\Lambda$-coalescents, the rate function $\gamma$ itself is the branching mechanism of a continuous-state branching process.
\end{enumerate}
\end{remarks}

As seen in Lemma \ref{lem_gamma}, the asymptotic growth as $x\to\infty$ of $\gamma(x)$ is at least of order $x$. Part (i) of the following proposition shows that, altering $\gamma$ additively by a function of asymptotic order smaller than $x$, asymptotically does essentially not change the scaling function $v(.,t)$. 
In part (ii) the slowly varying function $L_t$ of the scaling function is asymptotically calculated for a special case. 
\begin{proposition} Let $\gamma$ and $v(x,t)$ be defined by (\ref{eq_gamma}) and (\ref{eq_integral_normalizing}), respectively, and suppose that (\ref{eq_res_kappa}) holds.
	\begin{enumerate}
		\item[(i)] Assume that there exists a continuous function $\gamma_1:(1,\infty)\to(0,\infty)$ such that $(\gamma(x)-\gamma_1(x))/x\to0$ as $x\to\infty$. Then, for each $t\ge0$, there exists $x_0(t)>1$ such that the scaling $v_1(x,t)$, defined by the integral equation in (\ref{eq_integral_normalizing}) with $\gamma_1$ in place of $\gamma$, exists for all $x\ge x_0(t)$. Moreover, $v(x,t)\sim v_1(x,t)$ as $x\to\infty$.
		\item[(ii)] According to Proposition \ref{prop_gamma_equivalences}, $\gamma$ satisfies (\ref{eq_res_L}) with $\kappa\in[0,\infty)$ and a slowly varying function $L:(0,\infty)\to(0,\infty)$. If $L(x)\to C$ as $x\to\infty$ for some constant $C>0$, then, for each $t\ge 0$, $v(x,t)\sim x^{e^{-\kappa t}}C^{-\kappa^{-1}(1-e^{-\kappa t})}$ as $x\to\infty$ if $\kappa>0$ and $v(x,t)\sim xC^{-t}$ as $x\to\infty$ if $\kappa=0$.
	\end{enumerate}
	\label{prop_normalizing_constant_L}
\end{proposition}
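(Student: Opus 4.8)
The plan is to prove~(i) by comparing $v(x,\cdot)$ and $v_1(x,\cdot)$ as solutions of the autonomous initial value problems $\frac{{\rm d}}{{\rm d}t}v(x,t)=-\gamma(v(x,t))$, $v(x,0)=x$, and its analogue with $\gamma_1$ in place of $\gamma$ (recall Proposition~\ref{prop_res_normalizing}), and then to deduce~(ii) from~(i) when $\kappa>0$ and from Proposition~\ref{prop_normalizing}(i) when $\kappa=0$. For the existence assertion in~(i) I would first observe that, by Proposition~\ref{prop_gamma_equivalences}(iii), $\gamma(u)/u=\kappa\log u+\log L(u)$ with $L$ slowly varying, so $\log L(u)$ is of smaller order than $\log u$; together with the monotonicity of $u\mapsto\gamma(u)/u$ (Lemma~\ref{lem_gamma}), a comparison with $\int^{\infty}({\rm d}u)/(u\log u)$ gives $\int_c^{\infty}(\gamma(u))^{-1}\,{\rm d}u=\infty$ for every $c>1$. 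Since $(\gamma(u)-\gamma_1(u))/u\to0$ while $\gamma(u)/u$ is positive and nondecreasing, we get $\gamma_1(u)/\gamma(u)\to1$, and hence $\int_c^{\infty}(\gamma_1(u))^{-1}\,{\rm d}u=\infty$ as well. Fixing such a $c$, the map $H_1(x):=\int_c^x(\gamma_1(u))^{-1}\,{\rm d}u$ is a continuous strictly increasing bijection from $[c,\infty)$ onto $[0,\infty)$, and for $x\ge x_0(t):=H_1^{-1}(t)\ge c>1$ the number $v_1(x,t):=H_1^{-1}(H_1(x)-t)\in[c,x]$ is the unique solution in $(1,x]$ of the integral equation in~(\ref{eq_integral_normalizing}) with $\gamma_1$ replacing $\gamma$; moreover $v_1(x,t)\to\infty$ as $x\to\infty$, uniformly on compact $t$-intervals.

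To prove $v(x,t)\sim v_1(x,t)$, fix $T>0$ and restrict to $x\ge x_0(T)$ and $t\in[0,T]$. Put $D(x,t):=\log v(x,t)-\log v_1(x,t)$, $E(x,t):=\big(\gamma_1(v_1(x,t))-\gamma(v_1(x,t))\big)/v_1(x,t)$ and $m(r):=\gamma(e^r)/e^r$. Then
\[
\frac{{\rm d}}{{\rm d}t}D(x,t)\ =\ -\big(m(\log v(x,t))-m(\log v_1(x,t))\big)+E(x,t)\ =\ -m'(\zeta)\,D(x,t)+E(x,t),
\]
where $\zeta=\zeta(x,t)$ lies between $\log v_1(x,t)$ and $\log v(x,t)$ by the mean value theorem applied to $m$. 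As $v(x,t)\ge v(x,T)$ and $v_1(x,t)\ge v_1(x,T)$ with $v(x,T)\to\infty$ (Proposition~\ref{prop_normalizing}(iii)) and $v_1(x,T)\to\infty$ as $x\to\infty$, we obtain $\zeta(x,t)\to\infty$ uniformly in $t\in[0,T]$; since $m'(r)=\gamma'(e^r)-\gamma(e^r)/e^r\to\kappa$ by Proposition~\ref{prop_gamma_equivalences}(ii), this gives $m'(\zeta(x,t))=\kappa+\theta(x,t)$ with $\sup_{t\in[0,T]}|\theta(x,t)|\to0$, and likewise $\sup_{t\in[0,T]}|E(x,t)|\to0$ because $(\gamma_1(u)-\gamma(u))/u\to0$ and $v_1(x,t)\to\infty$. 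Solving the linear equation $\frac{{\rm d}}{{\rm d}t}D=-(\kappa+\theta)D+E$ with $D(x,0)=0$ via the integrating factor and using $\kappa\ge0$ bounds $\sup_{t\in[0,T]}|D(x,t)|$ by a $T$-dependent constant times $\sup_{t\in[0,T]}|E(x,t)|$, which tends to $0$; hence $v(x,t)/v_1(x,t)\to1$ uniformly on $[0,T]$, proving~(i).

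For~(ii), if $\kappa=0$ then $\gamma(x)/x=\log L(x)\to\log C<\infty$, so by the first remark after Proposition~\ref{prop_gamma_equivalences} the coalescent has dust with $\mu=\log C$ (note $a=\Xi(\{0\})=0$ by~(\ref{eq_res_kappa})), and Proposition~\ref{prop_normalizing}(i) yields $v(x,t)\sim xe^{-\mu t}=xC^{-t}$. If $\kappa>0$, I would set $\gamma_1(u):=u(\kappa\log u+\log C)$ for all $u$ beyond the point where this expression becomes positive and extend $\gamma_1$ continuously and strictly positively to the rest of $(1,\infty)$; then $(\gamma(u)-\gamma_1(u))/u=\log L(u)-\log C\to0$, so part~(i) applies and $v(x,t)\sim v_1(x,t)$. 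For $x$ large the entire range of integration in $\int_{v_1(x,t)}^x(\gamma_1(u))^{-1}\,{\rm d}u=t$ lies where $\gamma_1(u)=u(\kappa\log u+\log C)$ (because $v_1(x,t)\to\infty$), and the substitution $u=e^w$ turns this into $\int_{\log v_1(x,t)}^{\log x}(\kappa w+\log C)^{-1}\,{\rm d}w=t$, which integrates to $v_1(x,t)=x^{e^{-\kappa t}}C^{-\kappa^{-1}(1-e^{-\kappa t})}$; together with $v(x,t)\sim v_1(x,t)$ this is the assertion.

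The step I expect to be the main obstacle is the uniform control of $m'(\zeta(x,t))$ in part~(i): one must know that $v(x,t)$ and $v_1(x,t)$, and hence the intermediate point $\zeta$, tend to infinity uniformly on compact $t$-intervals, which is exactly where Proposition~\ref{prop_normalizing}(iii) and the monotonicity of $t\mapsto v(x,t)$ (Proposition~\ref{prop_res_normalizing}) enter; once this is in place, the rest of~(i) is a routine Gronwall-type estimate and~(ii) follows by an explicit quadrature. A smaller point is the bookkeeping showing that~(\ref{eq_res_kappa}) excludes coming down from infinity, which is what makes $v_1$ well defined for large $x$.
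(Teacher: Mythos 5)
Your proof is correct, and while it follows the same overall strategy as the paper (establish existence of $v_1(\cdot,t)$ for $x$ large, then compare $\log v$ and $\log v_1$ through a Gronwall-type estimate, and reduce (ii) to an explicit quadrature plus (i)), the way you control the nonlinearity in part (i) is genuinely different. The paper decomposes $\gamma(u)/u=\kappa\log u+\log L(u)$, writes the analogous identity for $\gamma_1$ and $L_1$, and then invokes the representation theorem for slowly varying functions to bound $|\log L_1(v(x,s))-\log L_1(v_1(x,s))|$ by $c_2+c_1|\log(v(x,s)/v_1(x,s))|$ before applying Gronwall. You instead apply the mean value theorem to $m(r):=\gamma(e^r)/e^r$ and use Proposition \ref{prop_gamma_equivalences} (ii), $m'(r)=\gamma'(e^r)-\gamma(e^r)/e^r\to\kappa$, to get a linear equation for $D=\log(v/v_1)$ with coefficient $\kappa+o(1)$ uniformly on compact time intervals; this bypasses the representation theorem entirely and is arguably cleaner, at the modest cost of having to justify the uniform divergence of $v(x,t)$ and $v_1(x,t)$ (which you do correctly via monotonicity in $t$ and Proposition \ref{prop_normalizing} (iii)) and of noting that the MVT coefficient $m'(\zeta(x,t))$ need not be continuous in $t$, so one should either pass to the integral form of the equation before applying Gronwall or observe that measurability suffices for the integrating factor. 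Your treatment of (ii) also differs slightly in the case $\kappa=0$: the paper solves the integral equation explicitly with $L\equiv C$ and then invokes (i), whereas you route through the dust characterization $\mu=\lim_{x\to\infty}\gamma(x)/x=\log C$ and Proposition \ref{prop_normalizing} (i); since that proposition is proved independently of the present one, there is no circularity, and both give $v(x,t)\sim xC^{-t}$.
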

\begin{remark}
	Assume that the coalescent has dust or, equivalently, that $\mu:=\lim_{x\to\infty}\gamma(x)/x<\infty$. Then $\gamma$ satisfies (\ref{eq_res_L}) with $\kappa=0$. Thus, $L(x)=e^{\gamma(x)/x}$ and, hence, $\lim_{x\to\infty}L(x)=e^\mu\in[1,\infty)$. 
	By Proposition \ref{prop_normalizing_constant_L} (ii), $v(x,t)\sim xe^{-\mu t}$ as $x\to\infty$ for all $t\ge0$, which also proves Proposition \ref{prop_normalizing} (i). For dust-free coalescents, $\lim_{x\to\infty}v(x,t)/x=0$ for all $t>0$.
\end{remark}

\subsubsection{Results concerning the block counting process} \label{sec_block_counting}

Let $n\in\nz$. The block counting process $(N_t^{(n)})_{t\ge0}$ with initial state $N_0^{(n)}=n$ jumps from state $k\in\{2,\ldots,n\}$ to state $j\in\{1,\ldots,k-1\}$ at the rate (see \cite[Eq.~(1.3)]{freundmoehle09} or \cite[Proposition 2.1]{gaisermoehle16})
\[
	q_{k,j}\ =\ a\binom{k}{2}1_{\{j=k-1\}}\ +\ \int_\Delta\sum_{i=1}^{j}f_{kji}(u)\nu({\rm d}u),
\]
where
\[
	f_{kji}(u)\ :=\ \sum_{\substack{k_1,\ldots,k_i\in\nz\\k_1+\cdots+k_i=k-j+i}}\frac{k!}{(j-i)!k_1!\cdots k_i!}(1-|u|)^{j-i}\sum_{\substack{l_1,\ldots,l_i\in\nz\\l_1<\cdots<l_i}}u_{l_1}^{k_1}\cdots u_{l_i}^{k_i}
\]
for $i\in\{1,\ldots,j\}$ and $u\in\Delta$.

From the Poisson point process construction of the coalescent it follows that the jump rates of the block counting process can be described in terms of an urn model as in \cite{moehle10} as follows. Fix $u\in\Delta$ and partition the interval $[0,1)$ into `urns' $J_0,J_1,\ldots$ of lengths $u_0:=1-|u|,u_1,u_2,\ldots$, i.e., $J_0:=[0,u_0),J_1:=[u_0,u_0+u_1), J_2:=[u_0+u_1,u_0+u_1+u_2)$ and so on. The `balls' $Z_1,Z_2,\ldots$ are i.i.d.~random variables, where $Z_1$ has an uniform distribution on $[0,1)$. Let $X_i(k,u):=\sum_{j=1}^k1_{\{Z_j=i\}}$ denote the number of balls in urn $i\in\nz_0:=\{0,1,\ldots\}$ after $k\in\nz_0$ throws. Note that
\begin{equation}
    Y(k,u)\ :=\ X_0(k,u)\ +\ \sum_{i\ge 1}1_{\{X_i(k,u)>0\}},\qquad k\in\nz,u\in\Delta,
    \label{eq_y}
\end{equation}
is the sum of the number of balls in urn $0$ and the number of all other occupied urns. Then
\begin{equation}
	q_{k,j}\ =\ a\binom{k}{2}1_{\{j=k-1\}}\ +\ \int_\Delta\pr(Y(k,u)=j)\nu({\rm d}u),\qquad j,k\in\nz,j<k.
\label{eq_rates_block_counting}
\end{equation}
This representation of the jump rates will turn out to be crucial to the proof of the main convergence result (Theorem \ref{thm_main}). Relation (\ref{eq_rates_block_counting}) also provides further insight into the function $\gamma$. For example, by (\ref{eq_rates_block_counting}), for all $k\in\nz$,
\begin{eqnarray}
      \sum_{j=1}^{k-1}(k-j)q_{k,j}
      & = &a\binom{k}{2}\ +\ \int_\Delta\me(k-Y(k,u))\nu({\rm d}u)\nonumber\\
      & = &a\binom{k}{2}\ +\ \int_\Delta\big(k-k(1-|u|)-\sum_{i\ge 1} (1-(1-u_i)^k)\big)\nu({\rm d}u)
      \ =\ \gamma(k). \label{eq_rate}
\end{eqnarray}
Thus, if the block counting process is in state $k\in\nz$, then $\gamma(k)=\sum_{j=1}^{k-1}(k-j)q_{k,j}$ is the expected rate of decrease of the block counting process.
Lemma \ref{lem_conv_Y} provided in the appendix shows that $Y(k,u)/k\to 1-|u|=:u_0$ almost surely as $k\to\infty$ for every $u\in\Delta$.

Define $\Delta^*:=\{u\in\Delta:|u|=1\}$. Assume that $\nu(\Delta^*)=0$ and that the regularity condition (\ref{eq_regularity}) holds. Define the function $\psi:\rz\to\cz$ via
\begin{equation}
	\psi(x)\ :=\ \int_\Delta\big((1-|u|)^{ix}-1+ix|u|\big)\nu({\rm d}u),\qquad x\in\rz.
\label{eq_psi}
\end{equation}
Note that (\ref{eq_regularity}) ensures that $\psi(x)\in\cz$. Define the transformation $g:\Delta\setminus\Delta^*\to(-\infty,0)$ via $g(u):=\log(1-|u|)$ for all $u\in\Delta\setminus\Delta^*$ and let $\varrho:=\nu_g$ denote the image measure of $\nu$ under $g$. Then,
\begin{eqnarray*}
	\psi(x)& = &\int_{(-\infty,0)}\big(e^{ixt}-1+ix(1-e^t)\big)\varrho({\rm d}t)\\& = & ix\int_{(-\infty,0)}\bigg(1-e^t+\frac{t}{1+t^2}\bigg)\varrho({\rm d}t)\ +\ \int_{(-\infty,0)}\bigg(e^{ixt}-1-ix\frac{t}{1+t^2}\bigg)\varrho({\rm d}t),~~x\in\rz.
\end{eqnarray*}
Thus, $\psi$ is the characteristic exponent of an infinitely divisible distribution. The regularity condition (\ref{eq_regularity}) is required for $\varrho:=\nu_g$ to be a L\'evy measure. In the following, for $t\ge0$, $S_t$ denotes a random variable with characteristic function $\phi_t$, given by
\begin{equation}
	\phi_t(x)\ :=\ \exp\bigg(\int_0^t\psi(e^{-\kappa s}x){\rm d}s\bigg),\qquad x\in\rz,t\ge 0,
	\label{eq_res_char_funct_S_t}
\end{equation}
where $\psi$ is defined by (\ref{eq_psi}).

The limiting process $X$ arising in the main convergence result (Theorem \ref{thm_main} below), whose distribution is determined through its semigroup $(T_t^X)_{t\ge0}$ by (\ref{eq_res_char_funct_S_t}) and (\ref{eq_semigroup_lim}), belongs to the class of Ornstein--Uhlenbeck type processes \cite{satoyamazato84}. The semigroup $(T_t^X)_{t\ge0}$ belongs to the class of generalized Mehler semigroups (see \cite{bogachevroecknerschmuland96}), since $\phi_{t+s}(x)=\phi_t(e^{-\kappa s}x)\phi_s(x)$ for $x\in\rz$ and $s,t\ge0$. Clearly, $(T_t^X)_{t\ge 0}$ is a Feller semigroup 
on $\widehat{C}(\rz)$, the space of continuous functions $f:\rz\to\rz$ vanishing at infinity.

\begin{theorem}
	Suppose that $\Xi$ satisfies (\ref{eq_regularity}) and $\Xi(\Delta^*)=0$. Let $\gamma$ be defined by (\ref{eq_gamma}) and suppose that (\ref{eq_res_kappa}) holds, i.e., the limit $\kappa:=\lim_{x\to\infty}x\gamma''(x)\in[0,\infty)$ exists. Moreover, let the scaling $v(n,t)$ be defined by (\ref{eq_integral_normalizing}). Then the logarithmically scaled block counting process $(\log N_t^{(n)}-\log v(n,t))_{t\ge0}$ converges in $D_{\rz}[0,\infty)$ to $X$ as $n\to\infty$, where $X=(X_t)_{t\ge0}$ is an Ornstein--Uhlenbeck type process with state space $\rz$, initial value $X_0=0$ and Mehler semigroup $(T_t^X)_{t\ge0}$ given by
	\begin{equation}
	T_t^Xf(x)\ :=\ \me(f(X_{s+t})|X_s=x)\ =\ \me(f(e^{-\kappa t}x+S_t)),\qquad x\in\rz,f\in B(\rz),s,t\ge 0,
	\label{eq_semigroup_lim}
	\end{equation}
    with the distribution of $S_t$ defined via its characteristic function (\ref{eq_res_char_funct_S_t}).
	\label{thm_main}
\end{theorem}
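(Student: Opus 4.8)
The plan is to prove weak convergence in $D_{\rz}[0,\infty)$ via uniform convergence of the infinitesimal generators, as announced in the introduction. Since assumption (\ref{eq_res_kappa}) forces $a:=\Xi(\{0\})=0$, the block counting process $N^{(n)}$ is a pure-jump Markov chain, and combining its jump rates with the identity $\frac{{\rm d}}{{\rm d}t}v(n,t)=-\gamma(v(n,t))$ from (\ref{eq_res_normalizing_derivative}) shows that, for $f\in C_c^\infty(\rz)$, the time-inhomogeneous generator of $Z^{(n)}:=(\log N_t^{(n)}-\log v(n,t))_{t\ge0}$ is
\[
  \mathcal{A}_t^{(n)}f(z)\ =\ \frac{\gamma(v(n,t))}{v(n,t)}\,f'(z)\ +\ \sum_{j=1}^{k-1}q_{k,j}\big(f(z+\log(j/k))-f(z)\big),\qquad k:=v(n,t)e^z,
\]
evaluated at those $z$ for which $k\in\{1,\dots,n\}$. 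Reading off the characteristic exponent from (\ref{eq_res_char_funct_S_t}) and (\ref{eq_semigroup_lim}) and substituting $t=g(u)=\log(1-|u|)$, $1-e^{g(u)}=|u|$, $\varrho=\nu_g$, the generator of the limiting Ornstein--Uhlenbeck type process $X$ (which solves ${\rm d}X_t=-\kappa X_t\,{\rm d}t+{\rm d}U_t$ for the L\'evy process $U$ with characteristic exponent $\psi$) is
\[
  \mathcal{A}^Xf(z)\ =\ -\kappa z\,f'(z)\ +\ \int_\Delta\big(f(z+g(u))-f(z)+|u|\,f'(z)\big)\,\nu({\rm d}u),
\]
with $C_c^\infty(\rz)$ a core; here $(T_t^X)_{t\ge0}$ is the Feller semigroup recorded before the statement of the theorem.

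The crucial step is to show that, for every $f\in C_c^\infty(\rz)$ and all $R,T\in(0,\infty)$,
\[
  \sup_{t\in[0,T]}\ \sup_{|z|\le R,\ v(n,t)e^z\in\nz}\ \big|\mathcal{A}_t^{(n)}f(z)-\mathcal{A}^Xf(z)\big|\ \longrightarrow\ 0\qquad(n\to\infty).
\]
Using the urn representation (\ref{eq_y}) and (\ref{eq_rates_block_counting}) with $a=0$ and then the identity $\gamma(k)=\int_\Delta\me(k-Y(k,u))\,\nu({\rm d}u)$ from (\ref{eq_rate}), one rewrites $\mathcal{A}_t^{(n)}f(z)$, after adding and subtracting $\frac{\gamma(k)}{k}f'(z)$, as the sum of a \emph{compensated integral}
\[
  \int_\Delta\me\Big[f(z+\log(1-W_{k,u}))-f(z)+W_{k,u}\,f'(z)\Big]\,\nu({\rm d}u),\qquad W_{k,u}:=\frac{k-Y(k,u)}{k}\in[0,1),
\]
and a \emph{residual drift} $\big(\gamma(v(n,t))/v(n,t)-\gamma(k)/k\big)f'(z)$. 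For the residual drift, write $\gamma(x)/x=\kappa\log x+\log L(x)$ as in (\ref{eq_res_L}); with $k=v(n,t)e^z$ it equals $\big(-\kappa z+\log(L(v(n,t))/L(v(n,t)e^z))\big)f'(z)$, and since $v(n,\cdot)$ is nonincreasing with $v(n,T)\to\infty$ (Propositions \ref{prop_res_normalizing} and \ref{prop_normalizing}), the uniform convergence theorem for slowly varying functions \cite{binghamgoldieteugels} kills the correction term uniformly over $|z|\le R$, $t\in[0,T]$, leaving $-\kappa z\,f'(z)$. For the compensated integral, Lemma \ref{lem_conv_Y} gives $W_{k,u}\to|u|$ almost surely as $k\to\infty$, so (using $\nu(\Delta^*)=0$) the integrand converges $\nu$-a.e.\ to $f(z+g(u))-f(z)+|u|f'(z)$; a second-order Taylor expansion gives, uniformly in $z\in\rz$, $|f(z+\log(1-w))-f(z)+wf'(z)|\le C_f\,(w^2\wedge1)\le C_f w^2$; and the combinatorial identity $k-Y(k,u)=\sum_{i\ge1}(X_i(k,u)-1)^+$ together with the factorial-moment bounds $\me[((X_i-1)^+)^2]\le\me[X_i(X_i-1)]=k(k-1)u_i^2$ and $\me[(X_i-1)^+(X_j-1)^+]\le\me[X_iX_j]=k(k-1)u_iu_j$ for $i\ne j$ yields $\me[W_{k,u}^2]\le|u|^2$, which is $\nu$-integrable by the regularity condition (\ref{eq_regularity}). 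Dominated convergence --- with this $z$-independent dominating function, and noting that the $u$-wise convergence is itself uniform in $|z|\le R$ --- then shows that the compensated integral converges to $\int_\Delta(f(z+g(u))-f(z)+|u|f'(z))\,\nu({\rm d}u)$ uniformly in $|z|\le R$; the further uniformity in $t\in[0,T]$ is automatic since the compensated integral depends on $(t,z)$ only through $k=v(n,t)e^z\ge v(n,T)e^{-R}\to\infty$. Adding the two pieces gives the displayed convergence.

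To finish, I would pass to the time-homogeneous space-time processes $(t,Z_t^{(n)})$ and $(t,X_t)$ on $[0,\infty)\times\rz$, whose generators $\partial_t+\mathcal{A}_t^{(n)}$ and $\partial_t+\mathcal{A}^X$ converge uniformly on compacts along the core $\{\varphi(t)f(z):\varphi,f\in C_c^\infty\}$. Together with the trivially convergent initial condition $Z_0^{(n)}=\log n-\log v(n,0)=0=X_0$, the well-posedness of the martingale problem for $\mathcal{A}^X$ (equivalently, that $(T_t^X)_{t\ge0}$ is Feller with core $C_c^\infty(\rz)$), and a compact containment estimate $\lim_{R\to\infty}\sup_n\pr(\sup_{s\le T}|Z_s^{(n)}|>R)=0$ --- which I would derive from a Gronwall-type a priori bound obtained by applying $\mathcal{A}_t^{(n)}$ to a Lyapunov function such as $z\mapsto\sqrt{1+z^2}$, using once more the compensation above and $\int_\Delta|u|^2\,\nu({\rm d}u)<\infty$ to make the bound uniform in $n$ --- the standard weak-convergence theorem for Markov processes (cf.\ the monograph of Ethier and Kurtz) yields $Z^{(n)}\Rightarrow X$ in $D_{\rz}[0,\infty)$.

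The main obstacle is the uniform generator convergence, specifically finding a dominating function for the jump integral that is both $\nu$-integrable and free of $z$. This is exactly where the identity $k-Y(k,u)=\sum_{i\ge1}(X_i(k,u)-1)^+$ does the work: it reduces the second moment of the normalized jump to an expression controlled by $\int_\Delta|u|^2\,\nu({\rm d}u)$, i.e.\ by the regularity condition (\ref{eq_regularity}), whereas the crude estimate $(X_i-1)^+\le X_i$ only bounds $\me[W_{k,u}^2]$ by a multiple of $|u|/k+|u|^2$, whose first term fails to be $\nu$-integrable for dust-free coalescents. A secondary difficulty is the uniform-in-$(z,t)$ control of the slowly varying correction $\log(L(v(n,t))/L(v(n,t)e^z))$, which is the precise point at which assumption (\ref{eq_res_kappa}), through its equivalent form (\ref{eq_res_L}), enters, together with the bookkeeping needed to turn a family of time-inhomogeneous approximations into a time-homogeneous scaling limit.
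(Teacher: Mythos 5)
Your overall strategy coincides with the paper's: the same decomposition of the time-inhomogeneous generator into the residual drift $h'(z)\big(\gamma(ke^{-z})/(ke^{-z})-\gamma(k)/k\big)$ plus a compensated jump integral, the same control of the drift through (\ref{eq_res_L}) and the uniform convergence theorem for slowly varying functions, and the same treatment of the jump part via the urn representation (\ref{eq_rates_block_counting}), the almost sure convergence of Lemma \ref{lem_conv_Y}, the uniform Taylor bound $\sup_z|I(z,w)|\le C_f w^2$, and a second-moment bound of order $|u|^2$ fed into dominated convergence. Your derivation of $\me(W_{k,u}^2)\le|u|^2$ from $k-Y(k,u)=\sum_{i\ge1}(X_i(k,u)-1)^+$ and multinomial factorial moments is in fact a slightly cleaner route to what the paper proves in Lemma \ref{lem_bound_Y}, and you correctly identify this moment bound as the point where (\ref{eq_regularity}) enters.

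The gap is in the final limit-theorem step. You establish generator convergence only uniformly over $|z|\le R$, and therefore must invoke a martingale-problem convergence theorem that requires the compact containment condition $\lim_{R\to\infty}\sup_n\pr(\sup_{s\le T}|Z_s^{(n)}|>R)=0$; you only sketch this via the Lyapunov function $\varphi(z)=\sqrt{1+z^2}$, and that sketch does not go through as written. Your estimate $|f(z+\log(1-w))-f(z)+wf'(z)|\le C_f(w^2\wedge 1)$ uses the boundedness of $f$ to handle $w$ near $1$; for the unbounded $\varphi$ the corresponding term degenerates to $|\varphi(z+\log(1-w))-\varphi(z)|\le|\log(1-w)|$, so that applying $\mathcal{A}_t^{(n)}$ to $\varphi$ is controlled only if $\int_{\{|u|>1/2\}}\log(1/(1-|u|))\,\nu({\rm d}u)<\infty$, which the regularity condition (\ref{eq_regularity}) does not guarantee (near $|u|=1$ it only yields $\nu(\{|u|>\varepsilon\})<\infty$). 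The paper avoids compact containment entirely by proving the generator convergence in the supremum over the \emph{whole} state space $\widetilde{E}_n$ and then applying \cite[IV, Corollary 8.7]{ethierkurtz}. The price is one extra case distinction that is absent from your argument: for $(s,x)\in\widetilde{E}_n$ with $s\in[0,T]$, either $k=e^xv(n,s)\ge\sqrt{v(n,T)}\to\infty$ or $x<-\frac{1}{2}\log v(n,T)\to-\infty$, and in the second regime both $A_s^{(n)}h(x)$ and $A^Xh(x)$ tend to $0$ because $h$, $h'$ and $x\mapsto xh'(x)$ vanish at infinity (for the drift term this also uses the Potter-type bound $|R(k,x)+\kappa x|\le C+|x|$ from \cite[Theorem 1.5.6]{binghamgoldieteugels}). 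If you replace your compact-set supremum by this global one and add the $x\to-\infty$ case, your argument closes without any containment estimate; as it stands, the compact containment step is a genuine unproven claim whose proposed proof would fail for some admissible measures $\Xi$.
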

\begin{remark}
    For results on the generator $A^X$ of the limiting process $X$ arising in Theorem \ref{thm_main} we refer the reader to (\ref{eq_lim_process_generator_0}).
\end{remark}
\begin{remark}
	If (\ref{eq_res_kappa}) and, hence, (\ref{eq_res_L}) holds, then $\int_c^{\infty}(\gamma(u))^{-1}{\rm d}u=\infty$ for some (and hence all) $c\in(1,\infty)$. For $\varepsilon\in(0,1)$ define $\Delta^\epsilon:=\{u\in\Delta:|u|\le1-\varepsilon\}$. Further, define $\Delta_{f}:=\{u\in\Delta:u_1+\ldots+u_n=1~\text{for some $n\in\nz$}\}$. Under the regularity condition (\ref{eq_regularity}) it holds that $\nu(\Delta\setminus\Delta^\varepsilon)<\infty$ for all $\varepsilon\in(0,1)$ (see \cite[p. 229]{limic10}). Due to $\Xi(\Delta_{f})=0$, the coalescents covered by Theorem \ref{thm_main} hence stay infinite \cite[Proposition 33]{schweinsberg00}. In other words the coalescents covered by Theorem \ref{thm_main} either have dust or they have no dust and are not coming down from infinity. A schematic representation of the space ${\cal M}(\Delta)$ of all finite measures $\Xi$ on $(\Delta,{\cal B}(\Delta))$ is provided in Figure \ref{figure_1}. In this representation, ${\cal M}(\Delta)$ is equipped with the topology of weak convergence, i.e., $\Xi_n\to\Xi$ as $n\to\infty$ if and only if $\lim_{n\to\infty}\int_\Delta f\,{\rm d}\Xi_n=\int_\Delta f\,{\rm d}\Xi$ for all continuous functions $f:\Delta\to\rz$. Note that, since $\Delta$ is compact, all continuous functions $f:\Delta\to\rz$ are bounded and uniformly continuous. The space ${\cal M}(\Delta)$ is metrizable, for example via the metric
\[
d(\Xi_1,\Xi_2)
\ :=\ \sum_{i\ge 1} 2^{-i} \frac{1}{1+\|f_i\|}
\bigg|\int f_i\,{\rm d}\Xi_1-\int f_i\,{\rm d}\Xi_2\bigg|,
\]
where $\{f_1,f_2,\ldots\}$ is a dense set of real valued continuous functions on $\Delta$. The results in Parthasarathy \cite[Chapter 6]{parthasarathy} imply that, with this metric, ${\cal M}(\Delta)$ is a compact Polish (separable
complete metric) space.
\end{remark}
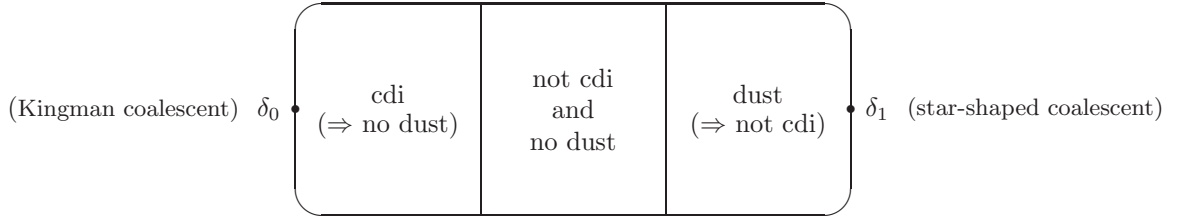
\begin{figure}[ht]
	\begin{center}
		\begin{picture}(210,80)
		\put(10,0){\line(1,0){190}}
		\put(10,80){\line(1,0){190}}
		\put(0,10){\line(0,1){60}}
		\put(70,0){\line(0,1){80}}
		\put(140,0){\line(0,1){80}}
		\put(210,10){\line(0,1){60}}
		\put(10,10){\oval(20,20)[bl]}
		\put(10,70){\oval(20,20)[tl]}
		\put(200,10){\oval(20,20)[br]}
		\put(200,70){\oval(20,20)[tr]}
		\put(0,40){\circle*{3}}
		\put(-10,40){\makebox(0,0)[]{$\delta_0$}}
		\put(-65,40){\makebox(0,0)[]{(\small Kingman coalescent)}}
		\put(210,40){\circle*{3}}
		\put(220,40){\makebox(0,0)[]{$\delta_1$}}
		\put(280,40){\makebox(0,0)[]{\small (star-shaped coalescent)}}
		\put(35,46){\makebox(0,0)[]{cdi}}
		\put(35,34){\makebox(0,0)[]{($\Rightarrow$ no dust)}}
		\put(175,46){\makebox(0,0)[]{dust}}
		\put(175,34){\makebox(0,0){($\Rightarrow$ not cdi)}}
		\put(105,52){\makebox(0,0)[]{not cdi}}
		\put(105,40){\makebox(0,0)[]{and}}
		\put(105,28){\makebox(0,0)[]{no dust}}
		\end{picture}
	\end{center}
	\caption{A schematic representation of the space of all exchangeable coalescents ($\Xi$-coalescents). Each point in the oval region corresponds to a finite measure $\Xi$ on $(\Delta,{\cal B}(\Delta))$. The compact Polish space ${\cal M}(\Delta)$ is divided into three regions of exchangeable coalescents, those coming down from infinity (cdi) to the left, the ones not coming down from infinity and having no dust in the middle and those having dust to the right.}
\label{figure_1}
\end{figure}

\subsubsection{Results concerning the fixation line}\label{sec_fix_line}
The fixation line has been introduced by H\'enard \cite{henard15} for $\Lambda$-coalescents and further studied in \cite{gaisermoehle16} for general $\Xi$-coalescents. The fixation line
$(L_t^{(n)})_{t\ge 0}$ with initial state $L_0^{(n)}=n$ is a Markov process
which moves from state $i\in\{n,n+1,\ldots\}$ to state $j\in\nz$ with $j>i$ at the rate (see \cite[Proposition 2.5]{gaisermoehle16})
\[
\gamma_{ij}\ =\ a\binom{j}{2}\delta_{j,i+1}\ +\ \int_\Delta\pr\big(Y(j,u)=i,Y(j+1,u)=i+1\big)\nu({\rm d}u),
\]
where $a:=\Xi(\{0\})$ and $Y(.,u)$ is defined via (\ref{eq_y}). The fixation line does not explode if and only if the coalescents stays infinite \cite[Remark 2.11]{gaisermoehle16}. Recall that (see (\ref{eq_intro_duality})) the block counting process is Siegmund dual to the fixation line. We will see in this subsection that this duality property transfers the convergence result for the block counting process (Theorem \ref{thm_main}) into an analogous convergence result (Theorem \ref{thm_fix_line}) for the fixation line. The arguments are similar as for the fixation line. We start as follows. Assume that $\int_2^\infty(\gamma(u))^{-1}{\rm d}u=\infty$. Define the function $w:[1,\infty)\times[0,\infty)\to[1,\infty)$ via
\begin{equation}
w(1,t)\ :=\ 1\quad\mbox{and}\quad
\int_x^{w(x,t)}\frac{{\rm d}u}{\gamma(u)}\ =\ t,\qquad x>1,t\ge 0.
\label{eq_int_normalizing_fix_line}
\end{equation}

\begin{proposition}
	Assume that $\int_2^\infty(\gamma(u))^{-1}{\rm d}u=\infty$. Then, for each $x>1$ and $t\ge0$, the solution $w(x,t)\in[x,\infty)$ to the integral equation in (\ref{eq_int_normalizing_fix_line}) exists and is unique. Furthermore, $w\in C_1((1,\infty)\times[0,\infty))$ with
	\begin{equation}
		\frac{{\rm d}}{{\rm d}t}w(x,t)\ =\ \gamma(w(x,t)),\quad\frac{{\rm d}}{{\rm d}x}w(x,t)\ =\ \frac{\gamma(w(x,t))}{\gamma(x)}\qquad x>1,t\ge0.
	\label{eq_derivative_normalizing_fix_line}
	\end{equation}
	The maps $x\mapsto w(x,t)$, $x\ge1$, and $t\mapsto w(x,t)$, $t\ge 0$, are strictly increasing.
\label{prop_normalizing_fix_line}	
\end{proposition}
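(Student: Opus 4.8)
The plan is to mirror the proof of Proposition~\ref{prop_res_normalizing} almost verbatim, exploiting the structural parallel between the two defining integral equations. First I would fix $x>1$ and consider the function $\Phi_x:[x,\infty)\to[0,\infty)$ defined by $\Phi_x(y):=\int_x^y({\gamma(u)})^{-1}\,{\rm d}u$. Since $\gamma$ is continuous and strictly positive on $(1,\infty)$ (by Lemma~\ref{lem_gamma}, $\gamma(u)>0$ for $u>1$), the integrand is continuous and positive on $[x,\infty)$, so $\Phi_x$ is strictly increasing and $C_1$ with $\Phi_x(x)=0$. The hypothesis $\int_2^\infty(\gamma(u))^{-1}{\rm d}u=\infty$ forces $\Phi_x(y)\to\infty$ as $y\to\infty$ (the integral from $x$ differs from the one from $2$ only by a finite constant, using again positivity and continuity of $1/\gamma$ on the compact pieces). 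Hence $\Phi_x$ is a strictly increasing continuous bijection from $[x,\infty)$ onto $[0,\infty)$, and for every $t\ge0$ there is a unique $w(x,t)\in[x,\infty)$ with $\Phi_x(w(x,t))=t$; this is exactly the integral equation in (\ref{eq_int_normalizing_fix_line}), and setting $w(1,t):=1$ completes the definition. Uniqueness is immediate from strict monotonicity of $\Phi_x$.

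Next I would establish the smoothness and the derivative formulas. Writing the defining relation as $H(x,w,t):=\int_x^{w}({\gamma(u)})^{-1}{\rm d}u-t=0$, the partial derivative $\partial H/\partial w=1/\gamma(w)\neq0$ on $(1,\infty)$, so the implicit function theorem yields $w\in C_1$ on $(1,\infty)\times[0,\infty)$. Differentiating $\int_x^{w(x,t)}({\gamma(u)})^{-1}{\rm d}u=t$ in $t$ gives $\gamma(w(x,t))^{-1}\,\partial_t w(x,t)=1$, i.e. $\partial_t w(x,t)=\gamma(w(x,t))$; differentiating in $x$ gives $\gamma(w(x,t))^{-1}\,\partial_x w(x,t)-\gamma(x)^{-1}=0$, i.e. $\partial_x w(x,t)=\gamma(w(x,t))/\gamma(x)$. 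These are the two identities in (\ref{eq_derivative_normalizing_fix_line}).

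Finally, strict monotonicity of both maps follows from these derivative formulas together with $\gamma>0$ on $(1,\infty)$: for fixed $x>1$ the derivative $\partial_t w=\gamma(w)\ge\gamma(x)>0$ since $w(x,t)\ge x>1$ and $\gamma$ is strictly increasing on $[1,\infty)$ (Lemma~\ref{lem_gamma}), so $t\mapsto w(x,t)$ is strictly increasing; likewise $\partial_x w=\gamma(w)/\gamma(x)>0$, so $x\mapsto w(x,t)$ is strictly increasing on $(1,\infty)$, and continuity at $x=1$ plus $w(1,t)=1<w(x,t)$ for $x>1$ handles the endpoint. I do not anticipate a genuine obstacle here; the only point requiring a little care is the boundary behaviour at $x=1$ (where $\gamma(1)=0$ so the integrand blows up), which is why $w(1,t)$ is defined separately by fiat and the regularity statements are asserted only on the open set $(1,\infty)\times[0,\infty)$—exactly as in Proposition~\ref{prop_res_normalizing}.
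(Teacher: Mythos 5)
Your proposal is correct and follows essentially the same route as the paper: both define $F_x(y)=\int_x^y(\gamma(u))^{-1}\,{\rm d}u$, use its strict monotonicity and the divergence hypothesis to get existence and uniqueness of $w(x,t)=F_x^{-1}(t)$, and obtain the two derivative formulas by inverting/differentiating the integral equation (your appeal to the implicit function theorem is just a compact packaging of the paper's inverse-function argument). No gaps.
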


It is readily seen from (\ref{eq_integral_normalizing}) and (\ref{eq_int_normalizing_fix_line}) that $v(w(x,t),t)=x=w(v(x,t),t)$ for all $x\ge1$ and $t\ge0$. Thus, for fixed $t\ge 0$, $w(.,t)$ is the inverse of $v(.,t)$. This aspect is utilized in the proof of Proposition \ref{prop_normalizing_fix_line_kappa} below, whose statements are variants of Propositions \ref{prop_normalizing} and \ref{prop_normalizing_constant_L} for the scaling of the fixation line. Note that, under the key assumption (\ref{eq_res_kappa}), in particular when the coalescent has dust, it holds that $\int_{c}^{\infty}(\gamma(u))^{-1}{\rm d}u=\infty$ for every $c>1$, so $w(x,t)$ is well-defined.

\begin{proposition}
	Let $\gamma$ be defined by (\ref{eq_gamma}) and $w$ be defined by (\ref{eq_int_normalizing_fix_line}).
	\begin{enumerate}
		\item[(i)] If $a:=\Xi(\{0\})=0$ and $\mu:=\int_{\Delta}|u|\nu({\rm d}u)<\infty$, then $w(x,t)\sim xe^{\mu t}$ as $x\to\infty$ for each $t\ge0$.
		\item[(ii)] Suppose that $\gamma$ satisfies (\ref{eq_res_kappa}) with $\kappa\ge0$. Then, for every $t\ge 0$, there exists a slowly varying function $L_t^{\#}:[1,\infty)\to(0,\infty)$ such that $w(x,t)=x^{e^{\kappa t}}L_t^{\#}(x)$ for all $x\ge 1$.
		\item[(iii)] 
		Suppose that $\gamma$ satisfies (\ref{eq_res_kappa}) with $\kappa\ge0$. Assume that there exists a continuous function $\gamma_1:(1,\infty)\to(0,\infty)$ such that $(\gamma(x)-\gamma_1(x))/x\to0$ as $x\to\infty$. Then the scaling $w_1(x,t)$, defined by (\ref{eq_int_normalizing_fix_line}) with $\gamma_1$ in place of $\gamma$, exists for all $t\ge0$ and $x\ge 1$. Moreover, assume that the map $x\mapsto\gamma_1(x)/x,$ $x>1,$ is nondecreasing if $\kappa=0$. Then $w(x,t)\sim w_1(x,t)$ as $x\to\infty$.
	\end{enumerate}
\label{prop_normalizing_fix_line_kappa}
\end{proposition}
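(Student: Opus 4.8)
The plan is to exploit the inverse relationship $w(\cdot,t)=v(\cdot,t)^{-1}$ established right after Proposition \ref{prop_normalizing_fix_line}, so that each of the three parts follows by transferring the corresponding statement of Propositions \ref{prop_normalizing} and \ref{prop_normalizing_constant_L} through the inverse. For part (i), since the coalescent has dust we know from Proposition \ref{prop_normalizing} (i) (or the Remark following Proposition \ref{prop_normalizing_constant_L}) that $v(x,t)\sim xe^{-\mu t}$ as $x\to\infty$. Writing $y=w(x,t)$, so that $x=v(y,t)$, we have $x\sim ye^{-\mu t}$ as $y\to\infty$ (note $w(x,t)\to\infty$ as $x\to\infty$ by monotonicity and by $w\ge x$), hence $y\sim xe^{\mu t}$; a short $\varepsilon$-argument using that $x\mapsto v(x,t)$ is nondecreasing and continuous makes the inversion of the asymptotic equivalence rigorous.

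For part (ii), I would proceed directly rather than via inversion of slow variation, because inverting $v(x,t)=x^{e^{-\kappa t}}L_t(x)$ with $L_t$ merely slowly varying requires Bingham--Goldie--Teugels machinery and is cleanest done from scratch. From (\ref{eq_derivative_normalizing_fix_line}) one has $\frac{d}{dx}\log w(x,t)=\frac{\gamma(w(x,t))}{w(x,t)}\cdot\frac{w(x,t)}{\gamma(x)}\cdot\frac{1}{w(x,t)}$; more usefully, differentiate $\log w$ with respect to $t$: $\frac{\partial}{\partial t}\log w(x,t)=\gamma(w(x,t))/w(x,t)$. By Proposition \ref{prop_gamma_equivalences}(iii), $\gamma(y)/y=\kappa\log y+\log L(y)$, so with $h(t):=\log w(x,t)$ we get the ODE $h'(t)=\kappa h(t)+\log L(e^{h(t)})$, $h(0)=\log x$. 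Solving the linear-in-$h$ part gives $h(t)=e^{\kappa t}\log x+\int_0^t e^{\kappa(t-s)}\log L(w(x,s))\,ds$ (when $\kappa>0$; the $\kappa=0$ case is $h(t)=\log x+\int_0^t\log L(w(x,s))\,ds$), so that $w(x,t)=x^{e^{\kappa t}}L_t^{\#}(x)$ with $\log L_t^{\#}(x):=\int_0^t e^{\kappa(t-s)}\log L(w(x,s))\,ds$. It remains to check $L_t^{\#}$ is slowly varying at $\infty$: since $w(\lambda x,s)/w(x,s)\to 1$ is what we are trying to prove, I would instead argue that $\log L(w(x,s))=o(\log x)$ uniformly in $s\in[0,t]$ — this follows because $w(x,s)\ge x$, $L$ is slowly varying so $\log L(y)=o(\log y)$, and $\log w(x,s)=e^{\kappa s}\log x+\log L_s^{\#}(x)$ with the error term controlled by a Gronwall/bootstrap argument — and then $\log L_t^{\#}(\lambda x)-\log L_t^{\#}(x)=\int_0^t e^{\kappa(t-s)}(\log L(w(\lambda x,s))-\log L(w(x,s)))\,ds\to 0$. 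The cleanest route is a bootstrap: first establish $\log w(x,t)=e^{\kappa t}\log x\,(1+o(1))$, feed this back to control $\log L(w(x,s))$, conclude slow variation of $L_t^{\#}$.

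For part (iii), I would mirror the proof of Proposition \ref{prop_normalizing_constant_L}(i). Existence of $w_1(x,t)$ for all $x\ge 1$, $t\ge 0$ follows from Proposition \ref{prop_normalizing_fix_line} applied to $\gamma_1$ once one checks $\int_2^\infty(\gamma_1(u))^{-1}\,du=\infty$; this holds because $(\gamma(x)-\gamma_1(x))/x\to 0$ forces $\gamma_1(x)=\gamma(x)(1+o(1))$ (using $\gamma(x)/x\to\infty$ when $\kappa>0$, and the extra monotonicity hypothesis on $\gamma_1(x)/x$ when $\kappa=0$ to rule out pathologies), so $1/\gamma_1$ and $1/\gamma$ are asymptotically comparable and the integrals diverge together. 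For the equivalence $w(x,t)\sim w_1(x,t)$, fix $t$ and set $\varepsilon>0$; the hypothesis gives, for $u$ large, $(1-\delta)\gamma(u)\le\gamma_1(u)\le(1+\delta)\gamma(u)$, so comparing the defining integral equations $\int_x^{w(x,t)}du/\gamma(u)=t=\int_x^{w_1(x,t)}du/\gamma_1(u)$ and using that both right-endpoints tend to $\infty$, a change-of-variables/squeeze argument bounds $w_1(x,t)$ between $w((1\mp\delta')x,t(1\pm\delta))$-type quantities; combined with part (ii)'s scaling $w(x,t)=x^{e^{\kappa t}}L_t^\#(x)$ and slow variation of $L_t^\#$, the ratio $w_1(x,t)/w(x,t)\to 1$.

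The main obstacle is the slow-variation verification in part (ii): one must resist circularity, since the natural statement "$w(\lambda x,t)/w(x,t)\to 1$" is exactly the conclusion. The resolution — the bootstrap establishing $\log w(x,t)\sim e^{\kappa t}\log x$ first from the crude bound $\log w(x,t)\ge\log x$ together with $\gamma(y)/y=O(\log y)$, then upgrading — needs to be carried out carefully, and the $\kappa=0$ case (where $e^{\kappa t}=1$ and the leading term does not dominate) requires separate and more delicate handling, which is presumably why the extra monotonicity hypothesis on $\gamma_1(x)/x$ appears in part (iii) for $\kappa=0$. The dust case $\kappa=0$, $\mu<\infty$ of part (ii) should be cross-checked against part (i): there $L(x)\to e^\mu$, so $L_t^\#(x)\to e^{\mu t}$ and indeed $w(x,t)\sim xe^{\mu t}$, consistent.
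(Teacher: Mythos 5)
Your part (i) is fine and is essentially the paper's argument (the paper also inverts $v(x,t)\sim xe^{-\mu t}$, phrased via a constant de Bruijn conjugate). But parts (ii) and (iii) each contain a genuine gap. In (ii) you replace the paper's one-line appeal to the inverse-function theorem for regularly varying functions (\cite[Theorem 1.5.13 and Proposition 1.5.15]{binghamgoldieteugels}: the inverse of $v(\cdot,t)=x^{e^{-\kappa t}}L_t(x)$ is regularly varying of index $e^{\kappa t}$, with slowly varying part the de Bruijn conjugate) by a direct ODE representation $\log L_t^{\#}(x)=\int_0^t e^{\kappa(t-s)}\log L(w(x,s))\,{\rm d}s$ plus a bootstrap. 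The representation is correct, but the bootstrap you describe only yields $\log w(x,t)=e^{\kappa t}\log x\,(1+o(1))$, hence $\log L_t^{\#}(x)=o(\log x)$ — which is strictly weaker than slow variation. To get $\log L_t^{\#}(\lambda x)-\log L_t^{\#}(x)\to 0$ you need $\log L(w(\lambda x,s))-\log L(w(x,s))\to 0$, and for that (via the uniform convergence theorem) you need the \emph{ratio} $w(\lambda x,s)/w(x,s)$ confined to a compact subset of $(0,\infty)$, uniformly in $s\in[0,t]$; control of the logarithms alone does not give this. For $\kappa>0$ a ratio bound can be extracted from $\int_{w(x,s)}^{w(\lambda x,s)}{\rm d}u/\gamma(u)=\int_x^{\lambda x}{\rm d}u/\gamma(u)$ together with $\gamma(u)/u\sim\kappa\log u$, but for $\kappa=0$ the same computation only bounds $\log(w(\lambda x,s)/w(x,s))$ by $\log\lambda\cdot\log L(w(\lambda x,s))/\log L(x)$, and there is no general reason this last ratio is bounded. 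You flag this difficulty yourself but do not resolve it, so as written the $\kappa=0$ case of (ii) is open in your proof.

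The gap in (iii) is sharper. From $(1-\delta)\gamma\le\gamma_1\le(1+\delta)\gamma$ on $[x,\infty)$ the comparison of the defining integral equations gives exactly $w(x,(1-\delta)t)\le w_1(x,t)\le w(x,(1+\delta)t)$, and this sandwich cannot produce $w_1(x,t)\sim w(x,t)$: for $\kappa>0$ one has $w(x,(1+\delta)t)/w(x,t)=x^{e^{\kappa t}(e^{\kappa\delta t}-1)}\cdot L^{\#}_{(1+\delta)t}(x)/L^{\#}_t(x)\to\infty$ as $x\to\infty$ for every fixed $\delta>0$, and the same divergence occurs in typical dust-free $\kappa=0$ examples (e.g.\ Case 3 of Example \ref{ex_nlg}). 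Perturbing the first argument by $(1\mp\delta')x$ does not repair this, since slow variation of $L_t^{\#}$ controls variation in $x$, not in $t$. To get asymptotic \emph{equivalence} one needs an additive control of $\log w-\log w_1$, e.g.\ the Gronwall argument of the paper's proof of Proposition \ref{prop_normalizing_constant_L} (i) applied to the logarithmic derivative $\frac{{\rm d}}{{\rm d}t}\log w=\kappa\log w+\log L(w)$, or the paper's actual route: establish $v\sim v_1$ first and transfer the equivalence through the inverse relation $w(v(x,t),t)=x$ (which is where the monotonicity hypothesis on $\gamma_1(x)/x$ for $\kappa=0$ is really used — it guarantees that the proof of Proposition \ref{prop_normalizing} applies to $\gamma_1$, not merely to rule out divergence of $\int^\infty(\gamma_1(u))^{-1}{\rm d}u$ as you suggest).
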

\begin{remark}
  	The regular variation of $w(.,t)$ under the key assumption (\ref{eq_res_kappa}) is a consequence of the regular variation of $v(.,t)$ (see Proposition \ref{prop_normalizing}) and the fact that $w(.,t)$ and $v(.,t)$ are inverse. The slowly varying part $L_t^{\#}$ can be retrieved from $L_t$ with the use of the de Bruijn conjugate, see \cite[Theorem 1.5.13 and Proposition 1.5.15]{binghamgoldieteugels} and the proof of Proposition \ref{prop_normalizing_fix_line_kappa} in Section \ref{sec_proof_fix_line} for further details.
\end{remark}

The following theorem is the analog of Theorem \ref{thm_main} for the fixation line.

\begin{theorem}
	Suppose that $\Xi$ satisfies (\ref{eq_regularity}) and $\Xi(\Delta^*)=0$. Let $\gamma$ be defined by (\ref{eq_gamma}) and suppose that (\ref{eq_res_kappa}) holds with $0\le\kappa<\infty$. 
	Let the scaling $w(n,t)$ be defined by (\ref{eq_int_normalizing_fix_line}). Then $(\log L_t^{(n)}-\log w(n,t))_{t\ge0}$ converges in $D_{\rz}[0,\infty)$ to $Y$ as $n\to\infty$, where $Y=(Y_t)_{t\ge0}$ is an Ornstein--Uhlenbeck type process with state space $\rz$, initial value $Y_0=0$ and Mehler semigroup $(T_t^Y)_{t\ge0}$ given by
	\begin{equation}
		T_t^Yf(y)\ :=\ \me(f(Y_{s+t})|Y_s=y)\ =\ \me(f(e^{\kappa t}y-e^{\kappa t}S_t)),\qquad y\in\rz,f\in B(\rz),s,t\ge0,
	\label{eq_semigroup_lim_fix_line}
	\end{equation}
    with the distribution of $S_t$ defined via its characteristic function (\ref{eq_res_char_funct_S_t}).
\label{thm_fix_line}
\end{theorem}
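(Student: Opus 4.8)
The plan is to derive Theorem \ref{thm_fix_line} from Theorem \ref{thm_main} by means of the Siegmund duality (\ref{eq_intro_duality}), exploiting that $w(\,\cdot\,,t)$ is the inverse of $v(\,\cdot\,,t)$ and that, under (\ref{eq_res_kappa}), both scalings are regularly varying: $v(x,t)=x^{e^{-\kappa t}}L_t(x)$ by Proposition \ref{prop_normalizing}(iii) and $w(x,t)=x^{e^{\kappa t}}L_t^{\#}(x)$ by Proposition \ref{prop_normalizing_fix_line_kappa}(ii), with $L_t,L_t^{\#}$ slowly varying. Write $\bar N_t^{(n)}:=\log N_t^{(n)}-\log v(n,t)$ and $\bar L_t^{(n)}:=\log L_t^{(n)}-\log w(n,t)$; by Theorem \ref{thm_main}, $\bar N^{(n)}\Rightarrow X$ in $D_\rz[0,\infty)$ with $X_0=0$, while $\bar L_0^{(n)}=0=Y_0$ since $w(n,0)=n$. \textbf{One-dimensional marginals.} Fix $t\ge0$ and $y\in\rz$. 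Since $L^{(n)}$ is $\nz$-valued, putting $m_n:=\lfloor w(n,t)e^y\rfloor+1\sim w(n,t)e^y$, the duality (\ref{eq_intro_duality}) gives
\[
\pr\big(\bar L_t^{(n)}\le y\big)\ =\ \pr\big(L_t^{(n)}\le m_n-1\big)\ =\ 1-\pr\big(L_t^{(n)}\ge m_n\big)\ =\ \pr\big(N_t^{(m_n)}\ge n+1\big).
\]
Because $v(\,\cdot\,,t)$ is regularly varying of index $e^{-\kappa t}$ and $v(w(n,t),t)=n$, it follows that $v(m_n,t)\sim v(w(n,t)e^y,t)\sim e^{ye^{-\kappa t}}n$, so $\log(n+1)-\log v(m_n,t)\to-ye^{-\kappa t}$. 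Since $m_n\to\infty$, Theorem \ref{thm_main} yields $\bar N_t^{(m_n)}\Rightarrow X_t\stackrel{d}{=}S_t$ (the latter by (\ref{eq_semigroup_lim})); hence, handling continuity points by a sandwiching argument,
\[
\pr\big(\bar L_t^{(n)}\le y\big)\ =\ \pr\big(\bar N_t^{(m_n)}\ge\log(n+1)-\log v(m_n,t)\big)\ \longrightarrow\ \pr\big(S_t\ge-ye^{-\kappa t}\big)\ =\ \pr\big(-e^{\kappa t}S_t\le y\big),
\]
which by (\ref{eq_semigroup_lim_fix_line}) is $\pr(Y_t\le y)$; thus $\bar L_t^{(n)}\Rightarrow Y_t$.

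\textbf{Finite-dimensional distributions.} The same computation applied to an arbitrary diverging initial state shows: whenever $k_n\to\infty$ with $\log k_n-\log w(n,s)\to y$, then, using the flow identity $w(n,s+t)=w(w(n,s),t)$ (immediate from (\ref{eq_int_normalizing_fix_line})) and the regular variation of $w(\,\cdot\,,t)$ of index $e^{\kappa t}$, one gets $\log L_t^{(k_n)}-\log w(n,s+t)\Rightarrow e^{\kappa t}y-e^{\kappa t}S_t$. Since $L_{s+t}^{(n)}\mid L_s^{(n)}=k\ \stackrel{d}{=}\ L_t^{(k)}$ by time-homogeneity, this is exactly the statement that the conditional law of $\bar L_{s+t}^{(n)}$ given $\bar L_s^{(n)}$ converges to the Mehler kernel $T_t^Y$ of (\ref{eq_semigroup_lim_fix_line}); a monotone coupling of the fixation lines $(L^{(k)})_k$, together with the uniformity in the initial state furnished by the generator estimate behind Theorem \ref{thm_main}, lets one pass this limit through the conditioning. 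Iterating through the Markov property (induction on the number of time points) gives convergence of all finite-dimensional distributions of $\bar L^{(n)}$ to those of $Y$; in particular $(T_t^Y)_{t\ge0}$ is a genuine generalized Mehler semigroup, as is also seen directly from $\phi_{s+t}(x)=\phi_s(x)\phi_t(e^{-\kappa s}x)$.

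\textbf{Tightness and conclusion.} It remains to establish tightness of $(\bar L^{(n)})$ in $D_\rz[0,\infty)$. Here one uses that $t\mapsto L_t^{(n)}$ is nondecreasing and pure-jump while $t\mapsto\log w(n,t)$ is continuous and increasing, and verifies an Aldous-type stopping-time criterion for the jumps of $\bar L^{(n)}$ from the transition rates $\gamma_{ij}$ of the fixation line together with the urn variables $Y(j,u),Y(j+1,u)$ and Lemma \ref{lem_conv_Y}; alternatively one transfers tightness from $(\bar N^{(n)})$. Combined with the convergence of finite-dimensional distributions on a dense set of times (including $0$, and noting $Y$ has no fixed discontinuities), this yields $\bar L^{(n)}\Rightarrow Y$ in $D_\rz[0,\infty)$; that $Y$ is Feller on $\widehat{C}(\rz)$ follows as for $X$.

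\textbf{Main obstacle.} The one-dimensional limit is easy; the delicate points are (i) the bookkeeping in the finite-dimensional step—pushing the limit uniformly through the conditioning as the initial state diverges—where the regular-variation inputs ($v,w$ mutually inverse, both regularly varying, Potter bounds) must be applied carefully since $m_n,k_n$ are only asymptotically equal to $w(n,\cdot)e^{y}$, and (ii) the tightness estimate. One may instead bypass duality entirely and prove Theorem \ref{thm_fix_line} by the method of Theorem \ref{thm_main}: show uniform convergence of the (time-inhomogeneous) generators of $\bar L^{(n)}$, built from the rates $\gamma_{ij}$ and the urn model, to the generator of $Y$, and apply the Trotter--Kurtz theorem, using duality only to identify the limit semigroup and the limiting Siegmund relation.
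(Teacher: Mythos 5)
Your opening step---the one-dimensional marginals via the Siegmund duality (\ref{eq_intro_duality}) combined with the regular variation of $v(\cdot,t)$ and $w(\cdot,t)$ and the identification $X_t\stackrel{d}{=}S_t$, $Y_t\stackrel{d}{=}-e^{\kappa t}S_t$---is exactly the paper's first step and is correct. The gaps come afterwards. For the finite-dimensional distributions you must integrate the limit $\log L_t^{(k_n)}-\log w(n,s+t)\Rightarrow e^{\kappa t}y-e^{\kappa t}S_t$ against the ($n$-dependent, random) law of $\bar L_s^{(n)}$, and for that you need the convergence of $x\mapsto\me\big(h(\log(w(k,t)/w(n,s+t))+Y_t^{(k)})\big)$ to $\me(h(e^{\kappa t}x+Y_t))$ to hold \emph{uniformly} over the whole state space $\{x:(s,x)\in\widetilde{E}_n\}$, not merely along sequences $k_n$ with $\log k_n-\log w(n,s)\to y$. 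The two justifications you offer do not supply this: the ``generator estimate behind Theorem \ref{thm_main}'' is a statement about the time-space generator of the \emph{block counting} process and says nothing about the conditional laws of the fixation line given its state at time $s$; and an unspecified ``monotone coupling of the fixation lines'' does not by itself produce uniformity in a diverging initial level. The paper proves precisely this uniform statement (its (\ref{eq_proof_fix_line_local_06})) by splitting into a compact window in $x$ (uniform convergence theorem for slowly varying functions, uniform continuity of $h$, and the already-proved one-dimensional convergence) and the tails $x\to\pm\infty$ (Proposition \ref{prop_app_slow_variation}, tightness of $\{Y_t^{(k)}\}_k$ from Prokhorov, compact support of $h$).

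The second gap is tightness, which in your fdd-plus-tightness architecture is an essential unproved ingredient. ``Transferring tightness from $\bar N^{(n)}$'' is not a routine consequence of Siegmund duality: (\ref{eq_intro_duality}) relates one-dimensional distributions with the roles of initial state and level exchanged and gives no control of modulus-of-continuity functionals of the paths; and the Aldous-criterion computation from the rates $\gamma_{ij}$ is not carried out (the fixation line has unbounded upward jumps, so this is not a formality). The paper avoids tightness altogether: having established \emph{uniform} convergence of the time-space semigroups on a dense subalgebra of $B(\widetilde{E})$ (via Stone--Weierstrass), it invokes \cite[IV, Theorem 2.11]{ethierkurtz}, which upgrades uniform semigroup convergence with a Feller limit directly to convergence in $D_{\widetilde{E}}[0,\infty)$. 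If you repair the uniformity in your second step you should do the same and delete the tightness step. Your proposed alternative---a direct generator argument for $L^{(n)}$ in the style of Theorem \ref{thm_main}---is plausible in principle but would require new estimates on the rates $\gamma_{ij}$ and on the unbounded upward jumps that neither you nor the paper provide; the paper's choice of the duality--semigroup route is precisely what lets it reuse Theorem \ref{thm_main} instead.
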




\subsubsection{Siegmund duality and summary of results} \label{sec_summary}
Let $X=(X_t)_{t\ge0}$ and $Y=(Y_t)_{t\ge0}$ be the limiting processes arising in Theorems \ref{thm_main} and \ref{thm_fix_line}, respectively. For $t\ge 0$ define $\widetilde{X}_t:=e^{X_t}$ and $\widetilde{Y}_t:=e^{Y_t}$. Consider the `exponential' Markov processes $\widetilde{X}:=(\widetilde{X}_t)_{t\ge 0}$ and $\widetilde{Y}:=(\widetilde{Y}_t)_{t\ge 0}$ both having state space $E:=(0,\infty)$. From (\ref{eq_semigroup_lim}) and (\ref{eq_semigroup_lim_fix_line}) it follows that the semigroups $(T_t^{\widetilde{X}})_{t\ge 0}$ and $(T_t^{\widetilde{Y}})_{t\ge 0}$ of $\widetilde{X}$ and $\widetilde{Y}$ are given by
\begin{equation} \label{semixtilde}
T_t^{\widetilde{X}}f(x)\ =\ \me(f(x^{e^{-\kappa t}}e^{S_t})),\qquad t\ge 0, f\in B(E), x\in E,
\end{equation}
and
\begin{equation} \label{semiytilde}
T_t^{\widetilde{Y}}g(y)\ =\ \me(g(y^{e^{\kappa t}}e^{-e^{\kappa t}S_t})),\qquad t\ge 0,g\in B(E),y\in E,
\end{equation}
where $S_t$ has characteristic function (\ref{eq_res_char_funct_S_t}).

Fix $t\ge 0$, define $\alpha:=e^{-\kappa t}$ and let $H:E\times E\to\{0,1\}$ denote the Siegmund duality kernel, i.e., $H(x,y):=1$ for $x\le y$ and $H(x,y):=0$ otherwise. For $x,y\in E$, by (\ref{semixtilde}), $T_t^{\widetilde{X}}H(.,y)(x)=\me(H(x^\alpha e^{S_t},y))=\pr(x^\alpha e^{S_t}\le y)$. Similarly, by (\ref{semiytilde}), $T_t^{\widetilde{Y}}H(x,.)(y)=\pr(H(x,y^{1/\alpha}e^{-S_t/\alpha}))
=\pr(x\le y^{1/\alpha}e^{-S_t/\alpha})=\pr(x^\alpha e^{S_t}\le y)$. Thus, $T_t^{\widetilde{X}}H(.,y)(x)=T_t^{\widetilde{Y}}H(x,.)(y)$ for all $t\ge 0$ and $x,y\in E$, showing that $\widetilde{X}$ is Siegmund dual to $\widetilde{Y}$.

Since the map $D_\rz[0,\infty)\ni x=(x_t)_{t\ge 0}\mapsto (e^{x_t})_{t\ge0}\in D_E[0,\infty)$ is continuous, an application of the continuous mapping theorem shows that Theorems \ref{thm_main} and \ref{thm_fix_line} can be summarized as follows.
\begin{theorem} \label{thm_main3}
   Under the conditions of Theorem \ref{thm_main} the following two assertions hold.
   \begin{enumerate}
      \item[i)] As $n\to\infty$, the scaled block counting process $(N_t^{(n)}/v(n,t))_{t\ge 0}$ converges in $D_E[0,\infty)$ to the Markov process $\widetilde{X}$ with $\widetilde{X}_0=1$ and semigroup (\ref{semixtilde}).
      \item[ii)] As $n\to\infty$, the scaled fixation line $(L_t^{(n)}/w(n,t))_{t\ge 0}$ converges in $D_E[0,\infty)$ to the Markov process $\widetilde{Y}$ with $\widetilde{Y}_0=1$ and semigroup (\ref{semiytilde}).
   \end{enumerate}
\end{theorem}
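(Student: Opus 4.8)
\textbf{Proof proposal for Theorem \ref{thm_main3}.}

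The plan is to deduce Theorem \ref{thm_main3} directly from Theorems \ref{thm_main} and \ref{thm_fix_line} via the continuous mapping theorem, together with the identification of semigroups carried out in the paragraphs immediately preceding the statement. First I would record that the exponential map $\exp_\infty\colon D_\rz[0,\infty)\to D_E[0,\infty)$, sending a path $x=(x_t)_{t\ge0}$ to $(e^{x_t})_{t\ge0}$, is continuous in the Skorohod topology: if $x^{(n)}\to x$ in $D_\rz[0,\infty)$ there are time changes $\lambda_n$ with $\lambda_n\to\mathrm{id}$ uniformly on compacts and $x^{(n)}\circ\lambda_n\to x$ locally uniformly; since $\exp$ is continuous on $\rz$ and uniformly continuous on compact subsets, $e^{x^{(n)}\circ\lambda_n}\to e^x$ locally uniformly, which gives Skorohod convergence of the images with the same time changes. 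Then the continuous mapping theorem applied to Theorem \ref{thm_main} yields that $(e^{\log N_t^{(n)}-\log v(n,t)})_{t\ge0}=(N_t^{(n)}/v(n,t))_{t\ge0}$ converges in $D_E[0,\infty)$ to $\widetilde X=(e^{X_t})_{t\ge0}$, and applied to Theorem \ref{thm_fix_line} that $(L_t^{(n)}/w(n,t))_{t\ge0}$ converges in $D_E[0,\infty)$ to $\widetilde Y=(e^{Y_t})_{t\ge0}$.

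It remains to check that $\widetilde X$ and $\widetilde Y$ are genuinely Markov processes with the asserted semigroups (\ref{semixtilde}) and (\ref{semiytilde}) and the asserted initial values. Since $x\mapsto e^x$ is a homeomorphism of $\rz$ onto $E$, the image of a Markov process under this map is again Markov, and its semigroup is obtained by conjugation: $T_t^{\widetilde X}f(x)=T_t^X(f\circ\exp)(\log x)=\me(f(e^{e^{-\kappa t}\log x+S_t}))=\me(f(x^{e^{-\kappa t}}e^{S_t}))$, which is (\ref{semixtilde}); likewise $T_t^{\widetilde Y}g(y)=\me(g(e^{e^{\kappa t}\log y-e^{\kappa t}S_t}))=\me(g(y^{e^{\kappa t}}e^{-e^{\kappa t}S_t}))$, which is (\ref{semiytilde}). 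The initial values $X_0=0$ and $Y_0=0$ from Theorems \ref{thm_main} and \ref{thm_fix_line} transform into $\widetilde X_0=1$ and $\widetilde Y_0=1$.

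This argument is essentially bookkeeping, so there is no substantial obstacle; the only point requiring a line of care is the continuity of $\exp_\infty$ on the Skorohod space, since composition with the (globally continuous but not uniformly continuous) map $\exp$ must be combined with the fact that all path evaluations and approximating time changes can be localized to compact time intervals and, for a fixed convergent sequence, to compact subsets of $\rz$ on which $\exp$ is uniformly continuous. (One may alternatively invoke the standard fact that post-composition with a continuous function induces a continuous map on $D$; see e.g. Whitt, \emph{Stochastic-Process Limits}, or Ethier and Kurtz.) Once that is in place, the Siegmund duality of $\widetilde X$ and $\widetilde Y$ has already been verified in the text preceding the theorem, completing the summary.
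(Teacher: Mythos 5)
Your proposal is correct and follows exactly the paper's route: the paper also derives Theorem \ref{thm_main3} by applying the continuous mapping theorem to the map $(x_t)_{t\ge0}\mapsto(e^{x_t})_{t\ge0}$ from $D_\rz[0,\infty)$ to $D_E[0,\infty)$, with the semigroups (\ref{semixtilde}) and (\ref{semiytilde}) identified by conjugation in the paragraphs preceding the statement. Your additional care about the continuity of the exponential map on the Skorohod space only makes explicit what the paper asserts without comment.
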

Thus, under the assumptions of Theorem \ref{thm_main}, the commutative diagram in Figure 2 holds.
\begin{figure}[ht]
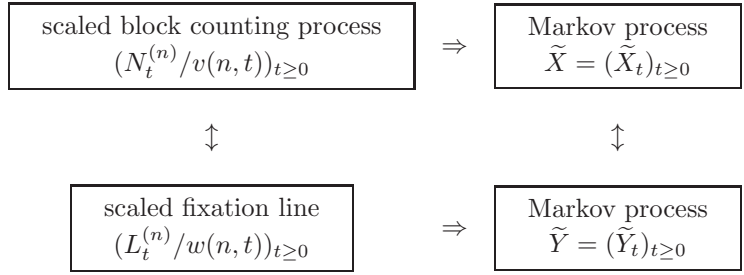

\[
\begin{array}{ccc}
\fbox{
      \begin{tabular}{c}
         scaled block counting process\\
         $(N_t^{(n)}/v(n,t))_{t\ge0}$
      \end{tabular}
}
      & \Rightarrow &
\fbox{
   \begin{tabular}{c}
      Markov process\\
      $\widetilde{X}=(\widetilde{X}_t)_{t\ge0}$
   \end{tabular}
}\\
   & & \\
   \updownarrow & & \updownarrow\\
   & & \\
\fbox{
   \begin{tabular}{c}
      scaled fixation line\\
      $(L_t^{(n)}/w(n,t))_{t\ge0}$
   \end{tabular}
}
      & \Rightarrow &
\fbox{
   \begin{tabular}{c}
      Markov process\\
      $\widetilde{Y}=(\widetilde{Y}_t)_{t\ge0}$
   \end{tabular}
}
\end{array}
\]
\caption{Commutative diagram summarizing the convergence and duality results.
In the diagram, `$\Rightarrow$' stands for convergence in $D_E[0,\infty)$ and `$\updownarrow$' for Siegmund duality $\pr(N_t^{(n)}\le m)=\pr(L_t^{(m)}\ge n)$, $n,m\in\nz$, and $\pr(\widetilde{X}_t^{(x)}\le y)=\pr(\widetilde{Y}_t^{(y)}\ge x)$, $x,y\in E$, $t\ge0$, respectively, where the upper indices indicate the initial states of the corresponding processes.}
\end{figure}

We close the result section by providing formulas for the infinitesimal generators of the processes $\widetilde{X}$ and $\widetilde{Y}$. Applying the generator formulas $A^{\widetilde{X}}f(x)=A^X(f\circ\exp)(\log x)$ and $A^{\widetilde{Y}}g(y)=A^Y(g\circ\exp)(\log y)$ yield that the generators $A^{\widetilde{X}}$ and $A^{\widetilde{Y}}$ of $\widetilde{X}$ and $\widetilde{Y}$ satisfy
\begin{equation} \label{genxtilde}
A^{\widetilde{X}}f(x)\ =\ -\kappa x(\log x)f'(x)\ +\ \int_\Delta
\big(f(x(1-|u|))-f(x)+|u|xf'(x)\big)\nu({\rm d}u)
\end{equation}
for $x>0$ and $f\in\widetilde{D}$ and
\begin{equation} \label{genytilde}
A^{\widetilde{Y}}g(y)\ =\ \kappa y(\log y)g'(y)\ +\ \int_\Delta
\big(g(y/(1-|u|))-g(y)-|u|yg'(y)\big)\nu({\rm d}u)
\end{equation}
for $y>0$ and $g\in\widetilde{D}$, where $\widetilde{D}$
denotes the space of all functions $f:E\to\rz$ such that the maps $f$,
$x\mapsto xf'(x)$, $x\mapsto x^2f''(x)$ and $x\mapsto x(\log x)f'(x)$ belong to $\widehat{C}(E)$. Note that $\widetilde{D}$ is a core for both generators, $A^{\widetilde{X}}$ and $A^{\widetilde{Y}}$.


\subsection{Examples}\label{sec_ex}

In this section several illustrating examples are provided. For most of the examples Theorem \ref{thm_main} and Theorem \ref{thm_fix_line} are applicable. Example \ref{ex_beta} treats the $\Lambda$-coalescent, where $\Lambda=\beta(a,b)$ is a beta distribution with parameters $a,b>0$. For the
$\beta(1,b)$-coalescent scaling limits have already been obtained in \cite{moehlevetter21}, and we clarify beforehand the relation between \cite{moehlevetter21} and this work. Example \ref{ex_nlg} studies the $\Lambda$-coalescent introduced in \cite{moehle21}, where the measure $\Lambda$ is a negative logarithmic gamma distribution (${\rm NLG}$-coalescent). Example \ref{ex_particular} provides a simple dust-free $\Lambda$-coalescent which nevertheless satisfies $\kappa=0$. Example \ref{ex_xi} presents a true $\Xi$-coalescent for which Theorem \ref{thm_main} and Theorem \ref{thm_fix_line} are applicable.

We start with putting the results of \cite{moehlevetter21} in the context of our work. Let $b>0$. In \cite{moehlevetter21} it is shown that $(\log N_t^{(n)}-e^{-bt}\log n)_{t\ge0}$ converges in $D_{\rz}[0,\infty)$ as $n\to\infty$ to an Ornstein--Uhlenbeck type process provided that the coalescent's driving measure $\Lambda$ satisfies
\begin{eqnarray}
	\Lambda(\{0\})\ =\ \Lambda(\{1\})\ =\ 0\quad\text{and}\quad c\ :=\ \int_{[0,1]}u^{-1}(\Lambda-b\lambda)({\rm d}u)\ <\ \infty.
\label{eq_ex_lambda}
\end{eqnarray}
Here $\lambda$ denotes Lebesgue measure. Condition
(\ref{eq_ex_lambda}) essentially forces the coalescent to behave similarly to the Bolthausen--Sznitman coalescent ({\rm BS}-coalescent), which is the $\Lambda$-coalescent where $\Lambda$ is the uniform distribution on $[0,1]$. For $x\ge 0$ define $\gamma_{{\rm BS}}(x):=\int_0^1\big((1-u)^x-1+ux\big)u^{-2}{\rm d}u$. Let $\Psi:=(\log\Gamma)'=\Gamma'/\Gamma$ denote the logarithmic derivative of the gamma function (digamma function).
It is easily checked that $\gamma_{{\rm BS}}(x)=x(\Psi(x+1)-\Psi(1)-1)=
x\log x-(\Psi(1)+1)x+O(1)$ as $x\to\infty$.
If (\ref{eq_ex_lambda}) holds, then
\[
\gamma(x)
\ =\ b\gamma_{{\rm BS}}(x)\ +\ \int_{[0,1]}\big((1-u)^x-1+xu\big)(\Lambda-b\lambda)({\rm d}u)\ =\ bx\log x+x(-b(1+\Psi(1))+c+o(1))
\]
for $x>0$ such that (\ref{eq_res_L}) and, hence, (\ref{eq_res_kappa}) are satisfied with $\kappa:=b$ and the slowly varying function $L$ in
(\ref{eq_res_L}) satisfies $L(x)\to\exp(-b(1+\Psi(1))+c)$ as $x\to\infty$.

\begin{example} \rm (beta coalescent)
   Let $\Lambda=\beta(a,b)$ be the beta distribution with parameters $a,b>0$.
   For the corresponding $\Lambda$-coalescent, the function
   $\gamma$, defined via (\ref{eq_gamma}), can be calculated explicitly.
   For $a\notin\{1,2\}$, a technical but straightforward calculation
   shows that
   \begin{equation} \label{eq_gammabeta1}
      \gamma(x)
      \ =\ \frac{(x+a+b-1)(x+a+b-2)}{(a-1)(a-2)}
      \frac{{\rm B}(a,x+b)}{{\rm B}(a,b)}
      \ +\ \frac{a+b-1}{a-1}x\ -\ \frac{(a+b-1)(a+b-2)}{(a-1)(a-2)}
   \end{equation}
   for all $x\ge 0$, where ${\rm B}(.,.)$ denotes the beta function. The boundary cases $a=1$ and $a=2$ need to be treated separately. For $a=1$ one obtains
   \begin{equation} \label{eq_gammabeta2}
      \gamma(x)\ =\ b(x+b-1)\big(\Psi(x+b)-\Psi(b)\big)-bx,\qquad x\ge 0,
   \end{equation}
   where $\Psi$ denotes the digamma function. For $a=2$ it follows that
   \begin{equation} \label{eq_gammabeta3}
      \gamma(x)\ =\ (b+1)x - b(b+1)\big(\Psi(x+b)-\Psi(b)\big),\qquad x\ge 0.
   \end{equation}
   Since $\Psi(x+b)=\log x+O(x^{-1})$ as $x\to\infty$ it
   follows from (\ref{eq_gammabeta1}), (\ref{eq_gammabeta2}) and (\ref{eq_gammabeta3})
   that
   \[
   \frac{\gamma(x)}{x}
   \left\{
   \begin{array}{cl}
      \ =\ \displaystyle\frac{\Gamma(a+b)}{\Gamma(b)(1-a)(2-a)}x^{1-a} + O(1) & \mbox{for $a<1$},\\
      \ =\ \displaystyle b\log x - b(\Psi(b)+1) + O\bigg(\frac{\log x}{x}\bigg) & \mbox{for $a=1$},\\
      \ \to\ \displaystyle\frac{a+b-1}{a-1} & \mbox{for $a>1$}.
   \end{array}
   \right.
   \]
   For all $y\in(0,\infty)$ the limit $d(y)$, defined in Proposition
   \ref{prop_gamma_equivalences} (iv), is thus given by
   \[
   d(y)\ =\ \left\{
   \begin{array}{cl}
      -\infty 1_{(0,1)}(y)+\infty 1_{(1,\infty)}(y)
         & \mbox{for $a<1$ (cdi),}\\
      b\log y & \mbox{for $a=1$ (not cdi and no dust),}\\
        0 & \mbox{for $a>1$ (dust),}
   \end{array}
   \right.
   \]
   and the curvature parameter $\kappa$ is given by
   \[
   \kappa\ :=\ \lim_{x\to\infty}x\gamma''(x)
   \ =\ \left\{
      \begin{array}{cl}
         \infty & \mbox{for $a<1$,}\\
         b & \mbox{for $a=1$,}\\
         0 & \mbox{for $a>1$.}
      \end{array}
   \right.
   \]
   For beta coalescents, the curvature parameter $\kappa$ thus characterizes both, the dust property and the cdi property. Theorems \ref{thm_main} and \ref{thm_fix_line} are hence applicable for the $\beta(a,b)$-coalescent with $a\ge 1$. Let us distinguish two cases.

   Case 1: If $a>1$ (dust case), then $\kappa=0$, $v(x,t)\sim e^{-\mu t}x$ and $w(x,t)\sim e^{\mu t}x$ as $x\to\infty$ with $\mu:=\int u^{-1}\Lambda({\rm d}u)=(a+b-1)/(a-1)$. In this case, Theorems \ref{thm_main} and \ref{thm_fix_line} are in essence logarithmic versions of \cite[Theorem 2.1]{gaisermoehle16}.

   Case 2: Assume now that $a=1$, i.e., that $\Lambda=\beta(1,b)$ is the beta distribution with parameters $1$ and $b>0$ having density $x\mapsto b(1-x)^{b-1}$, $x\in(0,1)$, with respect to Lebesgue measure. Then,
   $\kappa=b>0$.

   From the discussion above (see also \cite[Example 2 or Proposition 11]{moehlevetter21}) it follows that
	\begin{eqnarray*}
		\gamma(x)\ =\ bx\log x+x(\log C_b+o(1)),\qquad x>0,
	\end{eqnarray*}
	where $C_b:=\exp(-b(\Psi(b)+1))$. Independently one can verify that
	\begin{eqnarray*}
		x\gamma''(x)\ =\ bx\int_{0}^{1}\frac{(1-u)^{x+b-1}(\log(1-u))^2}{u^2}\,{\rm d}u\ \to\ b,\qquad x\to\infty.
	\end{eqnarray*}
	Let the scaling sequence $v(n,t)$ be defined by (\ref{eq_integral_normalizing}) for $n\ge2$. By Proposition \ref{prop_normalizing_constant_L}, $v(x,t)\sim x^{e^{-bt}}C_b^{b^{-1}(e^{-bt}-1)}=x^{e^{-bt}}e^{(\Psi(b)+1)(1-e^{-bt})}$ as $x\to\infty$. Similarly, $w(x,t)\sim x^{e^{bt}}e^{-(\Psi(b)+1)(1-e^{-bt})}$ as $x\to\infty$. By Theorems \ref{thm_main} and \ref{thm_fix_line}, both processes $(\log N_t^{(n)}-\log v(n,t))_{t\ge0}$ and $(\log L_t^{(n)}-\log w(n,t))_{t\ge 0}$ converge in $D_\rz[0,\infty)$ as $n\to\infty$. The process $(\log N_t^{(n)}-e^{-bt}\log n)_{t\ge0}$ converges as well as $n\to\infty$ due to the specifics of the scaling sequence, and the limiting processes generator $A^{X}$, which can be determined using Lemma \ref{lem_generator_core_0}, is given by
	\[
		A^{X}f(x)\ =\ bf'(x)(1+\Psi(b)-x)\ +\ \int_{[0,1]}\big(f(x+\log(1-u))-f(x)+uf'(x)\big)u^{-2}\Lambda({\rm d}u)
	\]
	for $x\in\rz$ and $f$ belonging to a core $D$, in agreement with the results of \cite{moehlevetter21}.
\label{ex_beta}
\end{example}

Further examples are now provided for which Theorems \ref{thm_main} and \ref{thm_fix_line} are applicable.

\begin{example} \rm \label{ex_nlg} (NLG-coalescent)
    Fix $\alpha,\varrho>0$. Assume that $\Lambda$ is the negative logarithmic gamma distribution having density $\alpha^\varrho u^{\alpha-1} (-\log u)^{\varrho-1}/\Gamma(\varrho)$, $u\in (0,1)$, with respect to Lebesgue measure on $(0,1)$. The corresponding $\Lambda$-coalescent was introduced in \cite[Example 3.2]{moehle21}. The asymptotics of $\gamma(x)$ as $x\to\infty$ is obtained as follows. For all $n\in\nz$, $\gamma(n)=\sum_{j=0}^{n-1}a_j$, where
    \[
    a_j\ :=\ \int\frac{1-(1-u)^j}{u}\Lambda({\rm d}u)\ \sim\ 
   \left\{
      \begin{array}{cl}
         \displaystyle\frac{\alpha^\varrho}{\Gamma(\varrho)}\frac{\Gamma(\alpha)}{1-\alpha}
         j^{1-\alpha}(\log j)^{\varrho-1}
            & \mbox{if $0<\alpha<1$,}\\
         \displaystyle\frac{(\log j)^\varrho}{\Gamma(\varrho+1)}
            & \mbox{if $\alpha=1$,}\\
         \displaystyle\bigg(\frac{\alpha}{\alpha-1}\bigg)^\varrho
            & \mbox{if $1<\alpha<\infty$,}
      \end{array}
   \right.
   \]
   as $j\to\infty$ by \cite[Lemma 7.3]{moehle21}, applied with $a:=\alpha$, $b:=j$ and $c:=\varrho$. Thus, as $n\to\infty$, the arithmetic mean $\gamma(n)/n=n^{-1}\sum_{j=0}^{n-1}a_j$ of the sequence $(a_j)_{j\in\nz_0}$
   satisfies
   \[
   \frac{\gamma(n)}{n}\ \sim\ 
   \left\{
      \begin{array}{cl}
         \displaystyle\frac{\alpha^\varrho}{\Gamma(\varrho)}
         \frac{\Gamma(\alpha)}{(1-\alpha)(2-\alpha)}
         n^{2-\alpha}(\log n)^{\varrho-1}
            & \mbox{if $0<\alpha<1$,}\\
         \displaystyle\frac{(\log n)^\varrho}{\Gamma(\varrho+1)}
            & \mbox{if $\alpha=1$,}\\
         \displaystyle\bigg(\frac{\alpha}{\alpha-1}\bigg)^\varrho
            & \mbox{if $1<\alpha<\infty$.}
      \end{array}
   \right.
   \]
   This coalescent has dust if and only if $\gamma(n)/n$ is bounded, so if and only if $\alpha>1$. This coalescent comes down from infinity if and only if $\sum_{n=2}^\infty 1/\gamma(n)<\infty$, so if and only if $\alpha<1$ or $\alpha=1$ and $\varrho>1$. It is easily seen that 
   \[
   \kappa\ :=\ \lim_{\varepsilon\to 0+}\frac{\Lambda([0,\varepsilon])}{\varepsilon}
   \ =\ \left\{
      \begin{array}{cl}
         \infty & \mbox{if $0<\alpha<1$ or if $\alpha=1$ and $1<\varrho<\infty$,}\\
         1 & \mbox{if $\alpha=\varrho=1$ (Bolthausen--Sznitman coalescent),}\\
         0 & \mbox{if $1<\alpha<\infty$ or if $\alpha=1$ and $0<\varrho<1$.}
      \end{array}
   \right.
   \]
   In particular, $\kappa=\infty$ if and only if the coalescent comes down
   from infinity. Theorems \ref{thm_main} and \ref{thm_fix_line} are applicable if and only if $\kappa<\infty$, so if and only if $\alpha>1$ or $\alpha=1$ and $0<\varrho\le1$. In the following three cases are distinguished.

   Case 1: For $1<\alpha<\infty$ the coalescent has dust. Hence, $\kappa=0$, $v(x,t)\sim e^{-\mu t}x$ and $w(x,t)\sim e^{\mu t}x$ as $x\to\infty$ with $\mu:=\int u^{-1}\Lambda({\rm d}u)=\lim_{x\to\infty}\gamma(x)/x=(\alpha/(\alpha-1))^\varrho$.
   Theorems \ref{thm_main} and \ref{thm_fix_line} are applicable and in essence logarithmic versions of \cite[Theorem 2.1]{gaisermoehle16}.

   Case 2: For $\alpha=\varrho=1$ we obtain the Bolthausen--Sznitman coalescent already studied in Example \ref{ex_beta}.

   Case 3: Assume that $\alpha=1$ and $0<\varrho<1$. Then $\kappa=0$ but nevertheless the coalescent is dust-free. Let $x>1$. By the definition (\ref{eq_gamma}) of the rate function $\gamma$,
\begin{eqnarray*}
   \gamma(x)
   & = &\frac{1}{\Gamma(\varrho)}\int_0^1 \frac{(1-u)^x-1+xu}{u}\frac{(-\log u)^{\varrho-1}}{u}\,{\rm d}u\\
& = &\frac{1}{\Gamma(\varrho+1)}\int_0^1 \frac{1-(1-u)^x-xu(1-u)^{x-1}}{u^2}(-\log u)^\varrho\,{\rm d}u,
\end{eqnarray*}
where the last equality holds by partial integration. The substitution $t=xu$ yields
\[
\frac{\gamma(x)}{x}\ =\ \frac{(\log x)^\varrho}{\Gamma(\varrho+1)}\int_0^x
\frac{1-(1-\frac{t}{x})^x-t(1-\frac{t}{x})^{x-1}}{t^2}
\bigg(1-\frac{\log t}{\log x}\bigg)^\varrho{\rm d}t.
\]
A careful analysis shows that, as $x\to\infty$, the latter integral is
asymptotically equal to $\int_0^\infty (1-e^{-t}-te^{-t})/t^2{\rm d}t+O(1/\log x)=[(e^{-t}-1)/t]_0^\infty+O(1/\log x)=1+O(1/\log x)$, which implies that
   \[
   \frac{\gamma(x)}{x}\ -\ \frac{(\log x)^\varrho}{\Gamma(\varrho+1)}
   \ =\ O((\log x)^{\rho-1})\ \to\ 0,\qquad x\to\infty.
   \]
  Proposition \ref{prop_normalizing_constant_L} (i), applied with  $\gamma_1(x):=x(\log x)^\varrho/\Gamma(\varrho+1)$, shows that the scaling $v(x,t)$ in Theorem \ref{thm_main} satisfies $v(x,t)\sim v_1(x,t)$ as $x\to\infty$, where
   $v_1(x,t)$ is the solution to the equation
   \[
   t\ =\ \int_{v_1(x,t)}^x\frac{{\rm d}u}{\gamma_1(u)}
   \ =\ \Gamma(\varrho+1)\int_{v_1(x,t)}^x \frac{{\rm d}u}{u(\log u)^\varrho}
   \ =\ \frac{\Gamma(\varrho+1)}{1-\varrho}\big((\log x)^{1-\varrho} - (\log v_1(x,t))^{1-\varrho}\big),
   \]
   whenever it exists and $v_1(x,t):=1$ otherwise. Define $C_{\varrho}:=(1-\varrho)/\Gamma(1+\varrho)$. Solving for $v_1(x,t)$ yields
   \[
   v_1(x,t)\ =\ \exp\bigg(\Big((\log x)^{1-\varrho}-C_{\varrho}t\Big)^{\frac{1}{1-\varrho}}\bigg),\qquad x>\exp\Big((C_{\varrho}t)^{(1-\varrho)^{-1}}\Big).
   \]
   By Theorem \ref{thm_main}, the process $(\log N_t^{(n)}-\log v_1(n,t))_{t\ge 0}$ converges in $D_\rz[0,\infty)$ as $n\to\infty$ to an Ornstein--Uhlenbeck type process $X$, whose generator $A^X$ satisfies (see (\ref{eq_lim_process_generator_0}))
   \[
   A^Xf(x)\ =\ \frac{1}{\Gamma(\varrho)}\int_0^1\big(f(x+\log(1-u))-f(x)+uf'(x)\big)\frac{(-\log u)^{\varrho-1}}{u^2}\,{\rm d}u,\qquad f\in D,x\in\rz,
   \]
   where $D$, the space of all twice differentiable functions $f:\rz\to\rz$ such that $f$, $f'$, $f''$ and the map $x\mapsto xf'(x)$, $x\in\rz$, belong to $\widehat{C}(\rz)$, is a core for $A^X$. Clearly, Theorem \ref{thm_fix_line} is applicable as well. Similar arguments as for the block counting process show that $w(x,t)\sim w_1(x,t)$ as $x\to\infty$, where
   \[
   w_1(x,t)\ :=\ \exp\bigg(\bigg(\frac{(1-\varrho)t}{\Gamma(\varrho+1)}+(\log x)^{1-\varrho}\bigg)^{\frac{1}{1-\varrho}}\bigg).
   \]
   Thus, the logarithmically scaled fixation line $(\log L_t^{(n)}-\log w_1(n,t))_{t\ge 0}$ converges in $D_\rz[0,\infty)$ as $n\to\infty$ to an Ornstein--Uhlenbeck type process $Y$. The generator $A^Y$ of the limiting process $Y$ satisfies
   \[
   A^Yg(y)\ =\ \frac{1}{\Gamma(\varrho)}\int_0^1\big(g(y-\log(1-u))-g(y)-ug'(y)\big)\frac{(-\log u)^{\varrho-1}}{u^2}\,{\rm d}u,\qquad g\in D,y\in\rz.
   \]

   Note that Assumption A of \cite{moehlevetter21} is not satisfied in the situation of Case 3, so both
   convergence results, for the block counting process and the fixation line, cannot be derived from the results provided in \cite{moehlevetter21}.
\end{example}

We provide another example of a dust-free $\Lambda$-coalescent which nevertheless satisfies $\kappa=0$.

\begin{example} \label{ex_particular} \rm
   Assume that the measure $\Lambda$ has density $u\mapsto 1/(1-\log u)$, $u\in (0,1)$, with respect to Lebesgue measure on $(0,1)$.
   Then, $\Lambda([0,\varepsilon])=\int_0^\varepsilon 1/(1-\log u){\rm d}u\sim e\varepsilon/(-\log\varepsilon)$ as $\varepsilon\to 0+$, and, hence,
   $\kappa=\lim_{\varepsilon\to 0+}\varepsilon^{-1}\Lambda([0,\varepsilon])=0$.
   Nevertheless, the corresponding $\Lambda$-coalescent is dust-free, since $\int u^{-1}\Lambda({\rm d}u)=[-\log(1-\log u)]_0^1=\infty$. The function $L$ in Proposition
   \ref{prop_gamma_equivalences} (iii) satisfies $L(x)=e^{\gamma(x)/x}$ and, hence, $L(x)\sim\log x$ as $x\to\infty$. Theorem \ref{thm_main} is
   applicable. By Proposition
   \ref{prop_normalizing_constant_L} (i), applied with $\gamma_1(x):=x\log\log x$, the scaling $v(x,t)$ can be chosen as the solution to the integral equation
   \[
   t=\int_{v(x,t)}^x \frac{1}{u\log\log u}{\rm d}u=
   [-{\rm Ei}(1,-\log\log u)]_{v(x,t)}^x
   ={\rm Ei}(1,-\log\log v(x,t))-{\rm Ei}(1,-\log\log x),
   \]
   where ${\rm Ei}(x):=\int_1^\infty t^{-1}e^{-xt}{\rm d}t$ denotes the
   exponential integral. By Lemma \ref{lem_generator_core_0}, the generator $A^X$ of the limiting process $X$ in Theorem \ref{thm_main} satisfies
   \[
   A^Xf(x)\ =\ \int_0^1 \frac{f(x+\log(1-u))-f(x)+uf'(x)}{u^2(1-\log u)}\,{\rm d}u,\qquad x\in\rz,f\in D,
   \]
   where $D$ denotes the space of twice differentiable functions $f:\rz\to\rz$ such that $f$, $f'$, $f''$ and the map $x\mapsto xf'(x)$, $x\in\rz$, belong to $\widehat{C}(\rz)$. We leave the formulation of the analogous results for the fixation line to the interested reader.
\end{example}

In the following an example with simultaneous multiple collisions is provided. The basic idea is to choose the measure $\Xi$ such that the corresponding $\Xi$-coalescent is dust-free, regular and stays infinite. We slightly modify the example studied in \cite{herrigermoehle} as follows.

\begin{example} \rm
Let $p_1,p_2,\ldots\in (0,1)$ with $\sum_{m=1}^\infty p_m<\infty$ and let $k_1,k_2,\ldots\in\nz$ such that $k_mp_m<1$ for all $m\in\nz$ and $\sum_{m=1}^\infty k_mp_m<\infty$. Suppose that $\Xi$ assigns for each $m\in\nz$ mass $p_m$ to the point $x^{(m)}\in\Delta$ whose first $k_m$ coordinates are equal to $p_m$ and all other coordinates are equal to $0$. Note that $\Xi(\Delta)=\sum_{m=1}^\infty p_m<\infty$. Moreover, $|x^{(m)}|=k_mp_m<1$ for all $m\in\nz$ and, hence,
$\Xi(\Delta_f)=0$ and $\Xi(\Delta^*)=0$. The corresponding $\Xi$-coalescent is dust-free, since
\[
\int_\Delta |u|\,\nu({\rm d}u)
\ =\ \sum_{m=1}^\infty |x^{(m)}| \frac{p_m}{(x^{(m)},x^{(m)})}
\ =\ \sum_{m=1}^\infty k_mp_m \frac{p_m}{k_mp_m^2}
\ =\ \sum_{m=1}^\infty 1\ =\ \infty,
\]
and regular, since $\int_\Delta |u|^2\,\nu({\rm d}u)=\sum_{m=1}^\infty
(k_mp_m)^2 p_m/(k_mp_m^2)=\sum_{m=1}^\infty k_mp_m<\infty$. Note that (see \cite[Proposition 1]{herrigermoehle}) all regular $\Xi$-coalescents are non-critical, i.e., $\nu(\Delta\setminus\Delta^\varepsilon)<\infty$ for some $\varepsilon\in (0,1)$. For all $x\ge 0$,
\begin{eqnarray*}
	\gamma(x)
	& = & \int_\Delta \sum_{i\ge 1} \big((1-u_i)^x-1+xu_i\big)
	\,\nu({\rm d}u)\\
	& = & \sum_{m=1}^\infty
	k_m \big((1-p_m)^x-1+xp_m\big)\frac{p_m}{k_mp_m^2}\\
	& = & \sum_{m=1}^\infty
	\frac{(1-p_m)^x-1+xp_m}{p_m}.
\end{eqnarray*}
Note that $\gamma(x)$ does not depend on the sequence $(k_m)_{m\in\nz}$ and is hence solely determined by the sequence $(p_m)_{m\in\nz}$.

For example, if $p_m=p^m$, $m\in\nz$, for some $p\in (0,1/2]$, then \cite[Example 6.1 b)]{herrigermoehle},
\[
\gamma(x)\ \sim\ \kappa_p x\log x,\qquad x\to\infty,
\]
with constant $\kappa_p:=-1/\log p$. Thus, $\sum_{n=2}^\infty 1/\gamma(n)=\infty$.
By Schweinsberg's criterion \cite[Proposition 33]{schweinsberg00} for non-critical coalescents, the $\Xi$-coalescent stays infinite. 

We have hence constructed a class of dust-free and regular $\Xi$-coalescents that stay infinite. For all $x>0$,
\begin{eqnarray*}
	\gamma''(x)
	& = & \sum_{m=1}^\infty \frac{(1-p^m)^x(\log(1-p^m))^2}{p^m}
	\ \sim \ \int_0^\infty \frac{(1-p^t)^x(\log(1-p^t))^2}{p^t}\,{\rm d}t\\
	& = & \kappa_p\int_0^1 \frac{(1-u)^x(\log(1-u))^2}{u^2}\,{\rm d}u
	\ \sim\ \kappa_p\int_0^1 (1-u)^x\,{\rm d}u
	\ = \ \frac{\kappa_p}{x+1},
\end{eqnarray*}
which shows that $x\gamma''(x)\to\kappa_p$ as $x\to\infty$. Thus,
Theorems \ref{thm_main} and \ref{thm_fix_line} are applicable.
\label{ex_xi}

For other choices of the sequence $(p_m)_{m\in\nz}$ one obtains further examples with different behavior. Intuitively, $\gamma(x)/x$ grows very slowly if $p_m$ tends to $0$ extremely fast. One such choice is $p_m:=p^{e^m}$ in which case we have $\gamma(x)\sim x\log\log x$ as $x\to\infty$, see also \cite[Example 6.1 c)]{herrigermoehle}. In this case $\kappa:=\lim_{x\to\infty}x\gamma''(x)=0$. Nevertheless the coalescent is dust-free. Theorems \ref{thm_main} and \ref{thm_fix_line} are applicable with scalings $v(x,t)=xL_t(x)$ and $w(x,t)=xL_t^{\#}(x)$, where $L_t$ and $L_t^{\#}$ are the slowly varying functions from Propositions \ref{prop_normalizing} and \ref{prop_normalizing_fix_line_kappa}, respectively.
\end{example}
We end this section by providing a simple example of a $\Lambda$-coalescent that does not come down from infinity but nevertheless has curvature parameter $\kappa=\infty$.
\begin{example} \rm
	Let $\alpha>0$. Consider a $\Lambda$-coalescent such that
	$\gamma(x)\sim x(\log x)(\log\log x)^\alpha$. Such a
	$\Lambda$-coalescent can be easily constructed. By Cauchy's
	condensation test, the series $\sum_{n=2}^\infty 1/\gamma(n)$
	converges if and only if $\alpha>1$. By Schweinsberg's criterion
	this coalescent therefore comes down from infinity if and only if
	$\alpha>1$. However, $\kappa=\infty$, no matter how $\alpha>0$ is
	chosen. For $\alpha\le 1$, this coalescent does not come down from
	infinity but nevertheless satisfies $\kappa=\infty$.
\label{ex_five}
\end{example}

\subsection{Proofs}\label{sec_proofs}

\subsubsection{The function \texorpdfstring{$\gamma$}{gamma}}

\begin{proof} (of Lemma \ref{lem_gamma}) Assume first that $a=0$.
Clearly, $\gamma(0)=\gamma(1)=0$. From Bernoulli's inequality (and $\nu(\{(1,0,\ldots)\})=0$) it follows that $\gamma(x)>0$ for $x>1$. By the mean value theorem, 
	there exist $\xi_i^{(1)}\in(0,u_i)$ and $\xi_i^{(2)}\in(0,\xi_i^{(1)})$ for $x\ge 2,i\in\nz$ and $u\in\Delta$ such that $\sum_{i\ge 1}(x^{-1}((1-u_i)^x-1)+u_i)=\sum_{i\ge 1}u_i(1-(1-\xi_i^{(1)})^{x-1})=(x-1)\sum_{i\ge 1}u_i\xi_i^{(1)}(1-\xi_i^{(2)})^{x-2}\le(x-1)(u,u)$. Hence, $\gamma(x)/x=\int_{\Delta}\sum_{i\ge 1}(x^{-1}((1-u_i)^x-1)+u_i)\nu({\rm d}u)\le (x-1)\Xi_{0}(\Delta)$.

	Let $u\in\Delta$. By \cite[Lemma 4.1]{moehle21}, the map $\Phi(x):=\sum_{i\ge 1}(1-(1-u_i)^x)$, $x\ge 0$, is infinitely often differentiable on $(0,\infty)$ with derivatives
	\[
		\Phi^{(k)}(x)\ =\ -\sum_{i\ge 1}(1-u_i)^x(\log(1-u_i))^k,\qquad x>0,k\in\nz,u\in\Delta.
	\]
	Thus, 
	\begin{eqnarray*}
		\frac{{\rm d}}{{\rm d}x}\sum_{i\ge 1}\big((1-u_i)^x-1+xu_i\big)& = &\frac{{\rm d}}{{\rm d}x}\big(x|u|-\Phi(x)\big)=|u|\ +\ \sum_{i\ge 1}(1-u_i)^x\log(1-u_i)\\& = &\sum_{i\ge 1}\big((1-u_i)^x\log(1-u_i)+u_i\big)
	\end{eqnarray*}
	and
	\[
		\frac{{\rm d}^k}{{\rm d}x^k}\sum_{i\ge 1}\big((1-u_i)^x-1+xu_i\big)\ =\ -\Phi^{(k)}(x)\ =\ \sum_{i\ge 1}(1-u_i)^x(\log(1-u_i))^k,\qquad k\in\nz\setminus\{1\}.
	\]
	Note that, for every $k\in\nz$, the $k$-th derivative is bounded by $C_k(u,u)$ for some $C_k>0$.  
	Hence, it is allowed to differentiate (with respect to $x$) below the integral such that $\gamma\in C_{\infty}((0,\infty))$ with derivatives as stated in the lemma.
	
	Since $\gamma'(x)>0$ for $x>1$, the function $\gamma$ is strictly increasing on $[1,\infty)$. Since $x\mapsto x^{-1}((1-u_i)^x-1)$, $x\ge 1$, is strictly increasing for every $u\in\Delta$ and $i\in\nz$, the map $x\mapsto \gamma(x)/x=\int_{\Delta}\sum_{i\ge 1}(x^{-1}((1-u_i)^x-1)+u_i)\nu({\rm d}u)$, $x\ge 1$, is strictly increasing as well.
	
	For $a>0$ the value $a\binom{x}{2}$ is added, which shows that
the results remain valid for $a>0$ as stated in the lemma.
\hfill$\Box$\end{proof}

\begin{proof} (of Proposition \ref{prop_gamma_equivalences})
    We prove this proposition by verifying the implications
    `(i) $\Rightarrow$ (ii) $\Leftrightarrow$ (iii) $\Leftrightarrow$ (iv) $\Rightarrow$ (v)' and `(v) $\Rightarrow$ (i)'.
	Define the functions $g,L:(0,\infty)\to(0,\infty)$ via $g(x):=\exp(\gamma(x)/x)=x^{\kappa}L(x)$, $x>0$. From $g'(x)=g(x)(\gamma'(x)/x-\gamma(x)/x^2)$ and $\frac{{\rm d}}{{\rm d}x}(x\gamma'(x)-\gamma(x))=x\gamma''(x)$, $x>0$, it follows that
	\begin{equation}
		\frac{xg'(x)}{g(x)}\ =\ \gamma'(x)-\frac{\gamma(x)}{x}\ =\ \frac{1}{x}\bigg(\int_{1}^{x}u\gamma''(u)\,{\rm d}u\ +\ \gamma'(1)\bigg),\qquad x>0.
	\label{eq_local_01}
	\end{equation}
	Due to the second equality of (\ref{eq_local_01}), (i) implies (ii). The map $x\mapsto x^2g'(x)=g(x)(x\gamma'(x)-\gamma(x))$, $x>0$, 
	is nondecreasing, since
	\[
		\frac{\rm{d}}{{\rm d}x}g(x)(x\gamma'(x)-\gamma(x))\ =\ g(x)\bigg(\bigg(\gamma'(x)-\frac{\gamma(x)}{x}\bigg)^2\, +\ x\gamma''(x)\bigg)\ \ge\ 0,\qquad x>0.
	\]
	Applying \cite[Theorem 2]{lamperti58} to the function $x\mapsto g(x^{-1})$, $x>0$, hence shows that (ii) is equivalent to the regular variation of $g$ with index $\kappa$, i.e., equivalent to the slow variation of $L$. Thus, (ii) and (iii) are equivalent. Conditions (iii) and (iv) are equivalent by the definition of slow variation. Suppose that (iv) holds. It is already proven that (iv) implies (ii) such that
	\begin{eqnarray*}
		\lim_{x\to\infty}\big(\gamma'(yx)-\gamma'(x)\big)\ \overset{\rm (ii)}{=}\ \lim_{x\to\infty}\bigg(\frac{\gamma(yx)}{yx}-\frac{\gamma(x)}{x}\bigg)\ \overset{\rm (iv)}{=}\ \kappa \log y,\qquad y>0,
	\end{eqnarray*}
	and (v) holds. Finally, it is shown that (v) implies (i). By the mean value theorem, there exists $\xi=\xi(x,y)$ between $y$ and $1$ such that $\gamma'(yx)-\gamma'(x)=\gamma''(x\xi)x(y-1)$ for all $x,y>0$. Given (v), $\lim_{x\to\infty}x\gamma''(x\xi)=(\kappa\log y)/(y-1)$ for $y>0,y\ne 1$. Since $\gamma''$ is nonnegative and ultimately nonincreasing, $\limsup_{x\to\infty}x\gamma''(x)\le \lim_{x\to\infty}x\gamma''(x\xi(x,y))=(\kappa\log y)/(y-1)$ for all $y\in(0,1)$ and $\liminf_{x\to\infty}x\gamma''(x)\ge \lim_{x\to\infty}x\gamma''(x\xi(x,y))=(\kappa\log y)/(y-1)$ for all $y>1$. Letting $y\to1$ establishes (i), since $\lim_{y\to1}(\log y)/(y-1)=1$.
\hfill$\Box$
\end{proof}
\begin{proof} (of Proposition \ref{prop_gamma_equi})
   We prove the equivalence of (i) and (ii) and afterwards the equivalence of (ii) and (iii).
    
   \noindent (i) $\Leftrightarrow$ (ii):
   Let $x>0$. By monotone convergence and Fubini's theorem,
   \begin{eqnarray*}
      \gamma''(x)
      & = & \sum_{i\ge 1} \int_\Delta (1-u_i)^x(\log(1-u_i))^2\nu({\rm d}u)\\
      & = & \sum_{i\ge 1} \int_\Delta \int_{[u_i,1)} x(1-y)^{x-1}\lambda({\rm d}y)
      (\log(1-u_i))^2\nu({\rm d}u)\\
      & = & \sum_{i\ge 1} x\int_{[0,1)} (1-y)^{x-1}
      \int_\Delta 1_{[0,y]}(u_i)(\log(1-u_i))^2\nu({\rm d}u)\lambda({\rm d}y)\\
      & = & \sum_{i\ge 1} x\int_0^1 (1-y)^{x-1} F_i(y){\rm d}y
      \ = \ x\int_0^1 (1-y)^{x-1}F(y){\rm d}y\\
      & = & x\int_0^\infty e^{-xt}F(1-e^{-t})\,{\rm d}t.
   \end{eqnarray*}
   Thus, the map $x\mapsto\omega(x):=\gamma''(x)/x$, $x\ge0$,
   is the Laplace transform of the measure on $(0,\infty)$ with density $u(t):=F(1-e^{-t})$, $t>0$. Assume now first that
   $0<\kappa<\infty$. By \cite[p.~445, Theorem 3]{feller}, applied with $\rho:=2$ and $L\equiv\kappa$, the asymptotics $\gamma''(x)\sim\kappa/x$, i.e., $\omega(x)\sim\kappa/x^2$
   as $x\to\infty$ is equivalent to $U(t):=\int_0^t u(s){\rm d}s\sim \kappa t^2/2$ as $t\to 0+$, which in turn is equivalent to $u(t)\sim\kappa t$ as $t\to 0+$, since the density $u$ is monotone. The proof for $\kappa=0$ 
   works with similar methods.

   \noindent (ii) $\Leftrightarrow$ (iii):
   Let $\varepsilon>0$. Choose
   $\delta=\delta(\varepsilon)\in(0,1)$ sufficiently small such that
   $(\log(1-x))^2\le(1+\varepsilon)x^2$ for all $x\in[0,\delta]$. For all
   $i\in\nz$ and $t\in[0,\delta]$ it follows that
   \[
   F_i(t)
   \ =\ \int_\Delta 1_{[0,t]}(u_i)(\log(1-u_i))^2\nu({\rm d}u)
   \ \le\ (1+\varepsilon)\int_\Delta 1_{[0,t]}(u_i)u_i^2\nu({\rm d}u)
   \ =\ (1+\varepsilon)G_i(t).
   \]
   Summation over all $i\in\nz$ yields $F(t)\le(1+\varepsilon)G(t)$ for all
   $t\in [0,\delta]$. Thus, $\liminf_{t\to 0+}t^{-1}F(t)\le(1+\varepsilon)
   \liminf_{t\to 0+}t^{-1}G(t)$ and $\limsup_{t\to 0+}t^{-1}F(t)\le(1+\varepsilon)\limsup_{t\to 0+}t^{-1}G(t)$. Since $\varepsilon>0$ can be chosen arbitrarily small it follows that
   $\liminf_{t\to 0+}t^{-1}F(t)\le\liminf_{t\to 0+} t^{-1}G(t)$ and
   $\limsup_{t\to 0+}t^{-1}F(t)\le\limsup_{t\to 0+} t^{-1}G(t)$. The converse
   two inequalities are obviously satisfied, since $G(t)\le F(t)$ for all
   $t\in[0,1)$. The equivalence of (ii) and (iii) now follows immediately.

   For $\Lambda$-coalescents it is easily seen that $G(t)=\Lambda([0,t])$, $t\in[0,1]$, showing that (iii) reduces to (\ref{lambda}).
   \hfill$\Box$
\end{proof}

\subsubsection{The normalizing function \texorpdfstring{$v$}{v}}\label{section_v}

Recall that $v(x,t)$ is the solution to $\int_{v(x,t)}^{x}(\gamma(u))^{-1}{\rm d}u=t$ for $x>1$ and $t\ge0$.

\begin{proof} (of Proposition \ref{prop_res_normalizing})
	In order to see that $v$ is well defined fix $x>1$ and define $F_x:(1,x]\to[0,\infty)$ via $F_x(y):=\int_y^x(\gamma(u))^{-1}{\rm d}u$, $y\in(1,x]$. Then $F_x(y)>0$ for $y\in(1,x]$, since $\gamma(u)>0$ for $u>1$, and $F_x\in C_1((1,x])$ with $F_x'(y)=-(\gamma(y))^{-1}$, $y\in(1,x]$, since $\gamma$ is continuous. In particular, $F_x$ is strictly decreasing. Clearly, $F_x(x)=0$. There exists $C\in\rz$ such that $\gamma(u)\le u(u-1)C$ for $u>1$. Then,
	\[
		F_x(y)\ \ge\ \frac{1}{C}\int_{y}^{x}\frac{{\rm d}u}{u(u-1)}\ =\ \frac{1}{C}\log\frac{1-x^{-1}}{1-y^{-1}},\qquad y\in(1,x],
	\]
	such that $\lim_{y\to1+}F_x(y)=\infty$. By the intermediate value theorem, the solution $v(x,t)\in(1,x]$ to the equation $F_x(v(x,t))=t$ exists and is unique for every $t\ge0$. Since $F_x'(y)<0$ for $y\in(1,x]$, the function $F_x$ is injective and the inverse function $F_x^{-1}:[0,\infty)\to(1,x]$ exists and is differentiable with $(F_x^{-1})'(t)=-\gamma(F_x^{-1}(t))$, $t\ge0$. Hence, $t\mapsto v(x,t)=F_x^{-1}(t)$, $t\ge0$, is differentiable and
	\[
		\frac{{\rm d}}{{\rm d}t}v(x,t)\ =\ -\gamma(v(x,t)),\qquad t\ge0.
	\]
	Differentiating both sides of the integral equation in (\ref{eq_integral_normalizing}) with respect to $x$ leads to $(\gamma(x))^{-1}-\frac{{\rm d}}{{\rm d}x}v(x,t)(\gamma(v(x,t)))^{-1}=0$. Equivalently,
	\[
		\frac{{\rm d}}{{\rm d}x}v(x,t)\ =\ \frac{\gamma(v(x,t))}{\gamma(x)},\qquad x>1,t\ge0.
	\]
	The two monotonicity statements follow from the formulas for the derivatives (and can also be deduced directly from Eq. (\ref{eq_integral_normalizing})). 
\hfill$\Box$\end{proof}

\begin{proof} (of Proposition \ref{prop_normalizing})
	(i) Suppose that $\Xi(\{0\})=0$ and $\mu:=\int_{\Delta}|u|\nu({\rm d}u)<\infty$. Fix $t>0$ and let $\varepsilon>0$ be arbitrary. Due to $\lim_{x\to\infty}\gamma(x)/x=\mu$, there exists $x_0>1$ such that $(\mu u)/\gamma(u)\in(1-\varepsilon,1+\varepsilon)$ for all $u\in(v(x,t),x)$ as long as $x\ge x_0$. Thus,
	\[
		\frac{1-\varepsilon}{\mu}\int_{v(x,t)}^{x}\frac{{\rm d}u}{u}\ \le\ \int_{v(x,t)}^{x}\frac{\rm{d}u}{\gamma(u)}\ \le\ \frac{1+\varepsilon}{\mu}\int_{v(x,t)}^{x}\frac{{\rm d}u}{u},
	\]
	such that, by Eq. (\ref{eq_integral_normalizing}), $\exp(-\mu t/(1-\varepsilon))\le v(x,t)/x\le \exp(-\mu t/(1+\varepsilon))$ for $x\ge x_0$. It follows that $v(x,t)\sim xe^{-\mu t}$ as $x\to\infty$, since $\varepsilon>0$ can be chosen arbitrarily small. Clearly, $v(x,0)=x$, so the statement also holds for $t=0$.
	
	(ii) Define $F:(1,\infty)\to(0,\infty)$ via $F(y)=\int_{y}^{\infty}(\gamma(u))^{-1}{\rm d}u$, $y>1$, and suppose that $F(y)<\infty$ for some (and hence all) $y>1$. Similarly to the proof of (i), it follows that $\lim_{y\to1+}F(y)=\infty$, $\lim_{y\to\infty}F(y)=0$, $F\in C_1((1,\infty))$ and $F'(y)=-(\gamma(y))^{-1}$, $y>1$. Thus, the solution $v(t)$ to the equation $F(v(t))=t$ exists and is unique for every $t\ge0$. The limit $c(t):=\lim_{x\to\infty}v(x,t)$ exists for every $t>0$, since $x\mapsto v(x,t)$, $x\ge1$, is nondecreasing, and $c(t)<\infty$ due to $\lim_{y\to\infty}F(y)=0$. From
	\[
		\int_{v(x,t)}^{v(t)}\frac{{\rm d}u}{\gamma(u)}\ =\ \int_{v(x,t)}^{x}\frac{{\rm d}u}{\gamma(u)}\ -\ \int_{v(t)}^{\infty}\frac{{\rm d}u}{\gamma(u)}\ +\ \int_{x}^{\infty}\frac{\rm{d}u}{\gamma(u)}\ =\ \int_{x}^{\infty}\frac{{\rm d}u}{\gamma(u)},\quad x\ge1,t\ge0,
	\]
	we obtain that $F(c(t))-F(v(t))=\lim_{x\to\infty}(F(v(x,t))-F(v(t)))=\lim_{x\to\infty}F(x)=0$. Since $F$ is injective, $c(t)=v(t)$ for each $t>0$. The proof of (ii) is complete. 
	
	(iii) Due to $v(x,0)=x$, the claim is true for $t=0$ with $L_0(x)=1$ for $x\ge1$. Fix $t>0$. By assumption and Proposition \ref{prop_gamma_equivalences}, there exists a slowly varying function $L:(0,\infty)\to(0,\infty)$ such that $\gamma(x)=\kappa x\log x+x\log L(x)$ for $x>0$. The fact that $L(x)=o(x^{\varepsilon})$ and $L(x)=\omega(x^{-\varepsilon})$ for every $\varepsilon>0$ is repeatedly used in this proof.
	
	First suppose that $\kappa>0$ and let $0<\varepsilon<\kappa$ be arbitrary. Recall that $\lim_{x\to\infty}v(x,t)=\infty$. There exists $x_0>1$ such that $(\kappa-\varepsilon)u\log u \le\gamma(u)\le (\kappa+\varepsilon)u\log u$ for every $u\in(v(x,t),x)$ and $x\ge x_0$. Thus,
	\[
		\frac{1}{\kappa+\varepsilon}\int_{v(x,t)}^{x}\frac{{\rm d}u}{u\log u}\ \le\ \int_{v(x,t)}^{x}\frac{{\rm d}u}{\gamma(u)}\ \le\ \frac{1}{\kappa-\varepsilon}\int_{v(x,t)}^{x}\frac{{\rm d}u}{u\log u},\qquad x\ge x_0.	
	\]
	Computing the integrals on both sides and using Eq. (\ref{eq_integral_normalizing}) yields
	\[
		\frac{1}{\kappa+\varepsilon}\log\bigg(\frac{\log x}{\log v(x,t)}\bigg)\ \le\ t\ \le\ \frac{1}{\kappa-\varepsilon}\log\bigg(\frac{\log x}{\log v(x,t)}\bigg),
	\]
	or equivalently, $x^{e^{-(\kappa+\varepsilon) t}}\le v(x,t)\le x^{e^{-(\kappa-\varepsilon)t}}$ for all $x\ge x_0$. From (\ref{eq_res_normalizing_derivative}) it follows that
	\[
		\frac{\frac{{\rm d}}{{\rm d}x}v(x,t)x}{v(x,t)}\ =\ \frac{\gamma(v(x,t))}{v(x,t)}\frac{x}{\gamma(x)}\ =\ \frac{\kappa \log v(x,t)+\log L(v(x,t))}{\kappa \log x+\log L(x)},\qquad x>1,
	\]
	such that
	\begin{equation}
		e^{-(\kappa+\varepsilon) t}\ \le\ \liminf_{x\to\infty}\frac{\frac{{\rm d}}{{\rm d}x}v(x,t)x}{v(x,t)}\ \le\ \limsup_{x\to\infty}\frac{\frac{{\rm d}}{{\rm d}x}v(x,t)x}{v(x,t)}\ \le\ e^{-(\kappa-\varepsilon)t}.
	\label{eq_proof_norm_local_01}
	\end{equation}
	
	We are going to show similar inequalities for $\kappa=0$. Suppose that $\gamma(x)=x\log L(x)$, $x>0$, for some slowly varying function $L$. By Proposition \ref{prop_gamma_equivalences}, $(xL'(x))/L(x)=\gamma'(x)-\gamma(x)/x\to\kappa=0$ as $x\to\infty$. From (\ref{eq_res_normalizing_derivative}) it follows that
	\begin{eqnarray*}
		\frac{{\rm d}}{{\rm d}t}\log L(v(x,t))\ =\ \frac{-L'(v(x,t))v(x,t)\log L(v(x,t))}{L(v(x,t))},\qquad x>1.
	\end{eqnarray*}
	Note that $L(x)=\exp(\gamma(x)/x)$ is nondecreasing on $[1,\infty)$ and, by definition, $v(x,t)\le x$ for all $x\ge1$. For $\varepsilon>0$ there exists $x_1>1$ such that $|\frac{{\rm d}}{{\rm d}s}\log L(v(x,s))|\le \varepsilon\log L(x)$ for all $s\in[0,t]$ and $x\ge x_1$. We hence obtain
	\[
		\big\vert\log L(v(x,t))-\log L(x)\big\vert\ \le\ \int_{0}^{t}\bigg|\frac{{\rm d}}{{\rm d}s}\log L(v(x,s))\bigg|\,{\rm d}s\ \le\ \varepsilon t\log L(x)
	\]
	such that
	\begin{equation}
		\frac{\frac{{\rm d}}{{\rm d}x}v(x,t)x}{v(x,t)}\ =\ \frac{\log L(v(x,t))}{\log L(x)}\ \in\ [1-\varepsilon t,1+\varepsilon]
	\label{eq_proof_norm_local_02}
	\end{equation}
	for all $x\ge x_1$. Letting $\varepsilon\to0$ in (\ref{eq_proof_norm_local_01}) and (\ref{eq_proof_norm_local_02}) yields $\lim_{x\to\infty}\frac{\frac{{\rm d}}{{\rm d}x}v(x,t)x}{v(x,t)}=e^{-\kappa t}$ for $\kappa\ge0$. A `variant at infinity' of \cite[Theorem 2]{lamperti58} completes the proof.\hfill$\Box$
\end{proof}

\begin{proof} (of Proposition \ref{prop_normalizing_constant_L})
	(i) Define the function $L_1:(1,\infty)\to(0,\infty)$ via $\gamma_1(x)=\kappa x\log x+x\log L_1(x)$, $x>1$. By assumption, 
	\[
		r(x)\ :=\ \frac{\gamma(x)-\gamma_1(x)}{x}\ =\ \log\bigg(\frac{L(x)}{L_1(x)}\bigg)\ \to\ 0,\qquad x\to\infty.
	\]
	In particular, $L(x)\sim L_1(x)$ as $x\to\infty$. Hence, $L_1$ is slowly varying and, as a consequence, $\gamma_1$ satisfies (\ref{eq_res_L}). Unfortunately, the scaling $v_1(x,t)$, defined by the integral equation in (\ref{eq_integral_normalizing}) with $\gamma_1$ in place of $\gamma$, does not exist globally. The reason is that the condition $\lim_{y\to1+}F_x(y)=\infty$ from the proof of Proposition \ref{prop_res_normalizing} cannot be guaranteed. However, $\gamma_1$ is continuous and positive on $(1,\infty)$ and $\int_{c}^{\infty}(\gamma_1(u))^{-1}{\rm d}u=\infty$ for each $c>1$. Carefully reading the proof of Proposition \ref{prop_res_normalizing} shows that the statements of Proposition \ref{prop_res_normalizing} remain true with the restriction that, for each $t\ge 0$, $x\ge x_0(t)$ for some $x_0(t)>1$. Moreover, we can choose $x_0(t)$ in such a way that $x_0(s)\le x_0(t)$ for $s\le t$. In particular, the scaling $v_1(x,t)$ exists for $x\ge x_0(t)$ and $t\ge0$. Now fix $t\ge 0$. From (\ref{eq_res_normalizing_derivative}) it follows that
	\[
		\log v(x,t)-\log x\ =\ -\int_{0}^{t}\frac{\gamma(v(x,s))}{v(x,s)}\,{\rm d}s
		\ =\ -\int_{0}^{t}\big(\kappa\log v(x,s)+\log L(v(x,s)\big)\,{\rm d}s
	\]
	for $x>1$. The same equalities hold when $x\ge x_0(t)$, and $v(x,t)$ and $L$ are replaced by $v_1(x,t)$ and $L_1$, respectively. Then, for $x>x_0(t)$,
	\[
		\bigg|\log\frac{v(x,t)}{v_1(x,t)}\bigg|
		\ \le\ \kappa\int_{0}^{t}\bigg|\log\frac{v(x,s)}{v_1(x,s)}\bigg|\,{\rm d}s\ +\ t\sup_{y\ge v(x,t)}|r(y)|\ +\ \int_{0}^{t}\bigg|\log\frac{L_1(v(x,s))}{L_1(v_1(x,s))}\bigg|\,{\rm d}s.
	\]
	Let $c_1,c_2>0$ be arbitrary. The representation theorem for slowly varying functions \cite[Theorem 1.3.1]{binghamgoldieteugels} states the existence of functions $\varepsilon,\delta:(0,\infty)\to\rz$ with $\lim_{x\to\infty}\varepsilon(x)=0$ and $\lim_{x\to\infty}\delta(x)=d \in\rz$ such that $\log L_1(x)=\delta(x)+\int_{1}^{x}(\varepsilon(u)/u){\rm d}u$, $x>0$. Hence,
	\[
		\bigg|\log\frac{L_1(v(x,s))}{L_1(v_1(x,s))}\bigg|\ \le\ \big\vert\delta(v(x,s))-\delta(v_1(x,s))\big\vert+\int_{v_1(x,s)}^{v(x,s)}\frac{|\varepsilon(u)|}{u}\,{\rm d}u\ \le\ c_2+c_1\bigg|\log\frac{v(x,s)}{v_1(x,s)}\bigg|
	\]
	for sufficiently large $x$ and $s\in[0,t]$, where in the last inequality it is used that $\inf_{s\in[0,t]}v(x,s)=v(x,t)\to\infty$ and $\inf_{s\in[0,t]}v_1(x,s)\to\infty$ as $x\to\infty$. Thus,
	\[
		\bigg|\log\frac{v(x,t)}{v_1(x,t)}\bigg|\ \le\ t\bigg(\sup_{y\ge v(x,t)}|r(y)|+c_2\bigg)\ +\ (\kappa+c_1)\int_{0}^{t}\bigg|\log\frac{v(x,s)}{ v_1(x,s)}\bigg|\,{\rm d}s.
	\]
	By Gronwall's inequality,
	\[
		\bigg|\log\frac{v(x,t)}{v_1(x,t)}\bigg|\ \le\ t\bigg(\sup_{y\ge v_1(x,t)}|r(y)|+c_2\bigg)\exp(t(\kappa +c_1)).
	\]
	We conclude that $\lim_{x\to\infty}|\log v(x,t)-\log v_1(x,t)|=0$, which completes the proof of (i), because $c_2>0$ is arbitrarily small.

	(ii) Assume first that $\gamma:(0,\infty)\to(0,\infty)$ is a function of the form (\ref{eq_res_L}) with $L\equiv C$ for some constant $C>0$. Then the integral in (\ref{eq_integral_normalizing}) can be calculated explicitly and it is easily seen that $v(x,t)$, defined via $v(x,t):=x^{e^{-\kappa t}}C^{\kappa^{-1}(e^{-\kappa t}-1)}$ if $\kappa>0$ and $v(x,t):=xC^{-t}$ if $\kappa=0$, solves (\ref{eq_integral_normalizing}) for every $x>1$ and $t\ge0$. If $L$ only satisfies $L(x)\to C$ as $x\to\infty$, then the same formulas for $v(x,t)$ hold, but with equality replaced by asymptotic equality as $x\to\infty$, as shown in (i). \hfill$\Box$
\end{proof}

\subsubsection{Proof of Theorem \ref{thm_main}}
The scaled block counting process and the scaled fixation line are in general time-inhomogeneous Markov processes. We therefore add a further `time variable' and consider the associated time-space processes, which are time-homogeneous. We want to show the uniform convergence of the generators. First a well-known result (\cite[Theorem 3.1]{satoyamazato84}) concerning generators of Ornstein--Uhlenbeck type processes on $\rz^d$ is applied. The short proof is an adaption of the proof of \cite[Lemma 6]{moehlevetter21} to the $\Xi$-coalescent setting.

\begin{lemma}
	Suppose that $\Xi$ satisfies (\ref{eq_regularity}) and $\Xi(\Delta^*)=0$. Fix $\kappa\in[0,\infty)$ and let the family of operators $(T_t^X)_{t\ge0}$ be defined by (\ref{eq_semigroup_lim}). Then $(T_t^X)_{t\ge0}$ is a Feller semigroup on $\widehat{C}(\rz)$. Let $D$ denote the space of all twice differentiable functions $f:\rz\to\rz$ such that $f$, $f'$, $f''$ and the map $x\mapsto xf'(x)$, $x\in\rz$, belong to $\widehat{C}(\rz)$. Then $D$ is a core for the generator $A^X$ corresponding to $(T_t^X)_{t\ge0}$ and
    \begin{equation}
	A^Xf(x)\ =\ -\kappa xf'(x)\ +\ \int_{\Delta}\big(f(x+\log(1-|u|))-f(x)+|u|f'(x)\big)\nu({\rm d}u),\quad x\in\rz,f\in D.
	\label{eq_lim_process_generator_0}
	\end{equation}
	\label{lem_generator_core_0}
\end{lemma}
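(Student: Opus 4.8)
The plan is to recognize $(T_t^X)_{t\ge 0}$ as the transition semigroup of an Ornstein--Uhlenbeck type process and to invoke \cite[Theorem 3.1]{satoyamazato84}, following the scheme of \cite[Lemma 6]{moehlevetter21}. Let $Z=(Z_t)_{t\ge 0}$ be the L\'evy process with $\me(e^{{\rm i}xZ_t})=e^{t\psi(x)}$ for $x\in\rz$, $t\ge 0$. This process exists: as observed after (\ref{eq_psi}), with $g(u):=\log(1-|u|)$ the image measure $\varrho:=\nu_g$ of $\nu$ under $g$ is a L\'evy measure on $(-\infty,0)$, because $\Xi(\Delta^*)=0$ forces $\nu(\Delta^*)=0$ (so that $g$ is $\nu$-a.e.\ finite), $\int_{[-1,0)}t^2\,\varrho({\rm d}t)=\int_{\{|u|\le 1-e^{-1}\}}(\log(1-|u|))^2\,\nu({\rm d}u)\le C\int_\Delta|u|^2\,\nu({\rm d}u)<\infty$ by the regularity condition (\ref{eq_regularity}), and $\varrho((-\infty,-1))=\nu(\Delta\setminus\Delta^{e^{-1}})<\infty$. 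One then checks that $S_t\overset{d}{=}\int_0^te^{-\kappa(t-s)}\,{\rm d}Z_s$ has characteristic function (\ref{eq_res_char_funct_S_t}), so that $X_t:=e^{-\kappa t}X_0+S_t$ is the OU type process driven by $Z$ with parameter $\kappa$ and $(T_t^X)_{t\ge0}$ from (\ref{eq_semigroup_lim}) is its semigroup.

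By \cite[Theorem 3.1]{satoyamazato84}, $(T_t^X)_{t\ge 0}$ is then a Feller semigroup on $\widehat C(\rz)$, the space of twice continuously differentiable functions with compact support is a core for its generator $A^X$, and $A^Xf(x)=-\kappa xf'(x)+L^Zf(x)$, where $L^Z$ is the generator of $Z$. It remains to identify $L^Z$ and to upgrade the core to $D$. Writing $\psi$ in the $|u|$-compensated form $\psi(x)=\int_{(-\infty,0)}(e^{{\rm i}xt}-1+{\rm i}x(1-e^t))\,\varrho({\rm d}t)$ exhibits $Z$ as a driftless L\'evy process with L\'evy measure $\varrho$ relative to the truncation function $t\mapsto e^t-1$, so $L^Zf(x)=\int_{(-\infty,0)}(f(x+t)-f(x)+(1-e^t)f'(x))\,\varrho({\rm d}t)$; transporting this integral back through $g$ and using $1-e^{g(u)}=|u|$ turns it into $\int_\Delta(f(x+\log(1-|u|))-f(x)+|u|f'(x))\,\nu({\rm d}u)$, which is precisely the integral in (\ref{eq_lim_process_generator_0}).

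Finally, for the core statement I would verify $D\subseteq\mathcal D(A^X)$ and then observe that, since the compactly supported $C^2$ functions form a core, lie in $D$, and $A^X$ restricted to them is contained in $A^X|_D$, which in turn is contained in $A^X$, the closure of $A^X|_D$ equals $A^X$; hence $D$ is a core. To see $D\subseteq\mathcal D(A^X)$ it suffices, by \cite[Theorem 3.1]{satoyamazato84}, to check that the right-hand side of (\ref{eq_lim_process_generator_0}) is a well-defined element of $\widehat C(\rz)$ for $f\in D$: the map $x\mapsto\kappa xf'(x)$ belongs to $\widehat C(\rz)$ by the definition of $D$, and the $\nu$-integral converges absolutely after splitting $\Delta$ at level $1-\delta$ --- on $\{|u|\le 1-\delta\}$ a second order Taylor expansion bounds the integrand by $C_\delta(\|f''\|_\infty+\|f'\|_\infty)|u|^2$, which is $\nu$-integrable by (\ref{eq_regularity}), while on $\{|u|>1-\delta\}$ the integrand is bounded by $2\|f\|_\infty+\|f'\|_\infty$ and $\nu$ has finite mass there --- after which $f,f',f''\in\widehat C(\rz)$ yields continuity and vanishing at infinity of the result. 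I expect the only genuine work to be this bookkeeping: matching the truncation conventions so that the generic OU generator of \cite{satoyamazato84} collapses to the clean compensator $|u|f'(x)$ rather than $\log(1-|u|)f'(x)$, while keeping track that both standing hypotheses enter essentially --- (\ref{eq_regularity}) controlling the small jumps of $\varrho$ and $\Xi(\Delta^*)=0$ guaranteeing that $g$, hence $\varrho$, is finite $\nu$-a.e.
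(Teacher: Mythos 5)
Your proposal is correct and follows essentially the same route as the paper: both reduce the statement to \cite[Theorem 3.1]{satoyamazato84} by substituting $g(u)=\log(1-|u|)$ to recognize (\ref{eq_lim_process_generator_0}) as an Ornstein--Uhlenbeck type generator with L\'evy measure $\varrho=\nu_g$. You merely spell out details the paper delegates to that citation (verifying $\varrho$ is a L\'evy measure under (\ref{eq_regularity}), and upgrading the core from $C_c^2$ to $D$, which the paper obtains directly from Steps 3 and 5 of the proof of \cite[Theorem 3.1]{satoyamazato84}, where $D$ is the space denoted $F_1$).
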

\begin{proof} (of Lemma \ref{lem_generator_core_0})
	Substituting $g:\Delta\setminus\Delta^*\to\rz$, $g(u):=\log(1-|u|)$, $u\in\Delta\setminus\Delta^*,$ shows that (\ref{eq_lim_process_generator_0}) is an integro-differential operator of the form (1.1) of Sato and Yamazato \cite{satoyamazato84} with dimension $d=1$. In \cite{satoyamazato84}, operators of this form are initially considered as acting on the space $C_c^2$ of twice differentiable functions with compact support (see the explanations after Eq.~(1.2) in \cite{satoyamazato84}), but Step 3 of the proof of \cite[Theorem 3.1]{satoyamazato84} shows that (\ref{eq_lim_process_generator_0}) even holds for functions $f\in D$ ($\supset C_c^2$). Note that the space $D$ is denoted by $F_1$ in \cite{satoyamazato84}. The fact that $D$ is a core for $A^X$ is only a different phrasing of the claim in Step 5 of the proof of \cite[Theorem 3.1]{satoyamazato84}.
\hfill$\Box$\end{proof}

When writing semigroups or generators in the remainder of the proof section, we mostly omit the upper index that identifies the corresponding process. We only use the symbol tilde to indicate the time-space process.

The time-space process $\widetilde{X}:=(t,X_t)_{t\ge0}$ is a time-homogeneous Markov process with state space $\widetilde{E}:=[0,\infty)\times\rz$ and semigroup $\widetilde{T}:=(\widetilde{T}_t)_{t\ge0}$, given by
\[
	\widetilde{T}_tf(s,x)\ :=\ \me(f(s+t,e^{-\kappa t}x+S_t)),\qquad (s,x)\in\widetilde{E},f\in B(\widetilde{E}),t\ge0.
\]
For $f\in\widehat{C}(\widetilde{E})$ and $s\ge0$ let the map $x\mapsto f(s,x)$, $x\in\rz$, be denoted by $\pi f(s,x)$. Let $\widetilde{D}$ denote the space of functions $f\in\widehat{C}(\widetilde{E})$ of the form $f(s,x)=\sum_{i=1}^{l}g_i(s)h_i(x)$, $(s,x)\in\widetilde{E}$, with $l\in\nz, h_i\in D$ and $g_i\in C_1([0,\infty))$ such that $g_i,g_i'\in\widehat{C}([0,\infty))$ for $i\in\{1,\ldots,l\}$. By \cite[Proposition 10]{moehlevetter21}, $\widetilde{T}$ is a Feller semigroup, $\widetilde{D}$ is a core for the generator $\widetilde{A}$ corresponding to $\widetilde{T}$ and
\[
	\widetilde{A}f(s,x)\ =\ \frac{\partial}{\partial s}f(s,x)\ +\ A^X \pi f(s,x),\qquad (s,x)\in\widetilde{E},f\in\widetilde{D}.
\]
For $n\in\nz$ the logarithmically scaled block counting process $X^{(n)}:=(X_t^{(n)})_{t\ge0}:=(\log N_t^{(n)}-\log v(n,t))_{t\ge0}$ is a time-inhomogeneous Markov process. The random variable $X_s^{(n)}$ takes values in $E_{n,s}:=\{x\in\rz : e^xv(n,s)\in[n]\}$. The `generator' $(A_s^{(n)})_{s\ge0}$ of $X^{(n)}$ is given by
\begin{eqnarray*}
	A_s^{(n)}f(x)& = &f'(x)\frac{-\tfrac{{\rm d}}{{\rm d}s}v(n,s)}{v(n,s)}\ +\ \sum_{j=1}^{xv(n,s)-1}\big(f(\log j -\log v(n,s))-f(x)\big)q_{xv(n,s),j}\\
	& = &f'(x)\frac{\gamma(v(n,s))}{v(n,s)}\ +\ \sum_{j=1}^{xv(n,s)-1}\big(f(\log j -\log
	v(n,s))-f(x)\big)q_{xv(n,s),j}
\end{eqnarray*}
for $x\in E_{n,s}$ and $s\ge0$. Here $f\in C_1(\rz)$ such that $f, f'\in\widehat{C}(\rz)$. The time-space process $\widetilde{X}^{(n)}:=(t,X_t^{(n)})_{t\ge0}$ is a time-homogeneous Markov process with state space $\widetilde{E}_n:=\{(s,x)\in[0,\infty)\times\rz : e^xv(n,s)\in[n]\}$ and semigroup $\widetilde{T}^{(n)}:=(\widetilde{T}_t^{(n)})_{t\ge0}$, given by
\[
	\widetilde{T}_t^{(n)}f(s,x)\ :=\ \me\big(f(s+t,\log N_t^{(e^xv(n,s))}-\log v(n,s+t))\big),\qquad (s,x)\in\widetilde{E}_n,f\in B(\widetilde{E}),t\ge0.	
\]
For $f\in\widetilde{D}$ (restricted to $\widetilde{E}_n\subset\widetilde{E}$) the corresponding generator $\widetilde{A}^{(n)}$ is given by
\[
	\widetilde{A}^{(n)}f(s,x)\ =\ \frac{\partial}{\partial s}f(s,x)\ +\ A_s^{(n)}\pi f(s,x),\qquad (s,x)\in\widetilde{E}_n.
\]

\begin{proof} (of Theorem \ref{thm_main})
	Write $k:=k(s,x,n):=e^xv(n,s)$ for $(s,x)\in\widetilde{E}_n$ and $n\in\nz$. Let $h\in D$. Define $R(k,x):=\frac{\gamma(ke^{-x})}{ke^{-x}}-\gamma(k)/k$ and
	\[
		S(k,x)\ :=\ \sum_{j=1}^{k-1}\big(h(x+\log\tfrac{j}{k})-h(x)+(1-\tfrac{j}{k})h'(x)\big)q_{k,j},\qquad k\in\nz,x\in\rz,
	\]
	such that
	\[
		A_s^{(n)}h(x)\ =\ h'(x)R(k,x)\ +\ S(k,x),\qquad (s,x)\in\widetilde{E}_n,n\in\nz.
	\]
 	Define the continuous function $I:\rz\times[0,1]\to\rz$ via $I(x,y):=h(x+\log(1-y))-h(x)+yh'(x)$, $y\in[0,1)$, and $I(x,1):=-h(x)+h'(x)$ for $x\in\rz$. From Eq.~(\ref{eq_rates_block_counting}) and the definition of $I$ it follows that
 	\[
 		S(k,x)
 		\ =\ \int_{\Delta}\me(I(x,1-Y(k,u)/k))\,\nu({\rm d}u),\qquad k\in\nz,x\in\rz.
 	\]
 	Also,
 	\[
 		A^Xh(x)\ =\ -\kappa xh'(x)\ +\ \int_{\Delta}I(x,|u|)\,\nu({\rm d}u),\qquad x\in\rz.
 	\]
 	Part 1 of the proof treats the convergence of $R(k,x)$ and Part 2 the convergence of $S(k,x)$.
 	
	\textbf{Part 1.} By assumption and Proposition \ref{prop_gamma_equivalences}, there exist $\kappa\ge0$ and a slowly varying function $L:(0,\infty)\to(0,\infty)$ such that $\gamma(x)=\kappa x\log x+x\log L(x)$, $x>0$. Then $R(k,x)+\kappa x=\log(L(ke^{-x})/L(k))$ for $k\in\nz$ and $x\in\rz$. 
	Applying \cite[Theorem 1.5.6 (ii)]{binghamgoldieteugels}, a boundary for the growth of slowly varying functions, 
	yields the existence of $C>0$ such that $|R(k,x)+\kappa x)|\le C+|x|$ for $k\in\nz$ and $-\infty<x\le \log k$. For $c>0$ there exist $-\infty<K_1<K_2<\infty$ such that
	\begin{equation}
		|h'(x)(R(k,x)+\kappa x)|\ \le\ C|h'(x)|+|xh'(x)|\ \le\ c, \qquad x\in\rz\setminus[K_1,K_2],x\le\log k,k\in\nz, 	
	\label{eq_proof_main_local_1}	
	\end{equation}
	since $h'$ and the map $x\mapsto xh'(x)$, $x\in\rz$, vanish as $|x|\to\infty$.
	The present restriction $x\le\log k(s,x,n)$ is met for $(s,x)\in\widetilde{E}_n$ and $n\in\nz$. Let $T>0$ be arbitrary. By the uniform convergence theorem for slowly varying functions (\cite[Theorem 1.5.2]{binghamgoldieteugels}) and $\lim_{n\to\infty}\inf_{s\in[0,T]}v(n,s)=\infty$,
	\begin{eqnarray}
		&&\lim_{n\to\infty}\sup_{(s,x)\in\widetilde{E}_n,s\in[0,T],x\in[K_1,K_2]}|R(k,x)+\kappa x|\nonumber\\&&~~~\ =\ \lim_{n\to\infty}\sup_{(s,x)\in\widetilde{E}_n,s\in[0,T],x\in[K_1,K_2]}|\log(L(v(n,s))/L(e^xv(n,s)))|\ =\ 0.
	\label{eq_proof_main_local_2}
	\end{eqnarray}
	From (\ref{eq_proof_main_local_1}), (\ref{eq_proof_main_local_2}) and arbitrariness of $c$ it follows that
	\begin{equation}
		\lim_{n\to\infty}\sup_{(s,x)\in\widetilde{E}_n,s\in[0,T ]}|h'(x)(R(k,x)+\kappa x)|\ =\ 0.
	\label{eq_Part_1}
	\end{equation}
	
	\textbf{Part 2.} Note that, as $n\to\infty$, $k=e^xv(n,s)\to\infty$ or $x\to-\infty$. For example, either $k\ge\sqrt{v(n,T)}$ or $x<-\tfrac{1}{2}\log v(n,T)$ for each $(s,x)\in\widetilde{E}_n$ with $s\in[0,T]$ and $n\in\nz$. In order to prove that
	\begin{equation}
		\lim_{n\to\infty}\sup_{(s,x)\in\widetilde{E}_n,s\in[0,T]}\big|\me\big(I(x,1-Y(k,u)/k)-I(x,|u|)\big)\big|\ =\ 0,\qquad u\in\Delta\setminus(\Delta^*\cup\{0\}),
	\label{eq_local}
	\end{equation}
	it therefore suffices to show that $\lim_{x\to-\infty}I(x,|u|)=0$, $\lim_{x\to-\infty}\me(I(x,Y(k,u)/k))=0$ for any $k\in\nz$ and $\lim_{k\to\infty}\sup_{x\in\rz}|\me(I(x,1-Y(k,u)/k)-I(x,|u|))|=0$  for each $u\in\Delta\setminus(\Delta^*\cup\{0\})$.

	Clearly, $\sup_{x\in\rz,y\in[0,1]}|I(x,y)|\le 2||h||+||h'||<\infty$. In particular, the family of functions $\mathcal{I}:=\{I(x,.):x\in\rz\}$ is uniformly bounded. The family $\mathcal{I}$ is equicontinuous on any interval $[0,c]$ with $c<1$, since $h$ is uniformly continuous and $h'$ is bounded. In view of \cite[Lemma 9]{moehlevetter21}, the almost sure convergence of $1-Y(k,u)/k$ to $|u|$ as $k\to\infty$ implies that $\lim_{k\to\infty}\sup_{x\in\rz}|\me(I(x,1-Y(k,u)/k))-I(x,|u|)|=0$ for any $u\in\Delta\setminus(\Delta^*\cup\{0\})$. 
	The cited lemma does not allow the limiting `random' variable $|u|$ to obtain the values $0$ and $1$ with positive probability, hence we impose the restriction of $u$ to $\Delta\setminus(\Delta^*\cup \{0\})$. For any $y\in[0,1]$, $\lim_{x\to-\infty}I(x,y)=0$, since $\lim_{|x|\to\infty}h'(x)=\lim_{|x|\to\infty}h(x)=0$. Thus, $\lim_{x\to-\infty}I(x,|u|)=0$ and, by dominated convergence, $\lim_{x\to-\infty}\me(I(x,1-Y(k,u)/k))$ for $k\in\nz$ and $u\in\Delta$, which completes the proof of (\ref{eq_local}).
		
	 Taylor's theorem applied to $y\mapsto h(x+\log(1-y))$, $y<1$, evaluated at $y=0$ with mean value remainder states the existence of $\xi\in
	(0,y)$ such that
	\[
		I(x,y)\ =\ (1-\xi)^{-2}(h''(x+\log(1-\xi))-h'(x+\log(1-\xi)))(y-\xi)y,\qquad x\in\rz,y\in(0,1).
	\]
	In particular, $\sup_{x\in\rz}|I(x,y)|\le (1-c)^{-2}(||h''||-||h'||)y^2<\infty$ for $0\le y\le c$ and any $c<1$. Thus, there exists $C\in\rz$ such that $\sup_{x\in\rz}|I(x,y)|\le Cy^2$ for every $y\in[0,1]$. From Lemma \ref{lem_bound_Y} it follows that $\sup_{n\in\nz}{\sup_{(s,x)\in\widetilde{E}_n,s\in[0,T ]}}|\me(I(x,1-Y(k,u)/k))-I(x,|u|)|\le\sup_{k\in\nz,x\in\rz}|\me(I(x,1-Y(k,u)/k))|+\sup_{x\in\rz}|I(x,|u|)|\le (D_2+1)C|u|^2$ for any $u\in\Delta$. 
	Due to (\ref{eq_local}) and (\ref{eq_regularity}), the dominated convergence theorem is applicable such that
	\begin{eqnarray}
		&&\lim_{n\to\infty}\sup_{(s,x)\in\widetilde{E}_n,s\in[0,T]}\bigg|\int_{\Delta}\me(I(x,1-Y(k,u)/k))\,\nu({\rm d}u)-\int_{\Delta}I(x,|u|)\,\nu({\rm d}u)\bigg|
		\nonumber\\&&~~~\ \le\ \lim_{n\to\infty}\int_{\Delta}\sup_{(s,x)\in\widetilde{E}_n,s\in[0,T ]}\big|\me(I(x,1-Y(k,u)/k))-I(x,|u|)\big|\,\nu({\rm d}u)\ =\ 0.
		\label{eq_Part_2}
	\end{eqnarray}
	Here we made use of $\nu(\Delta^*\cup\{0\})=0$.
	
	Eqs. (\ref{eq_Part_1}) and (\ref{eq_Part_2}) imply
	\[
		\lim_{n\to\infty}\sup_{(s,x)\in\widetilde{E}_n,s\in[0,T]}|A_s^{(n)}h(x)-A^Xh(x)|\ =\ 0.
	\]
	Hence,
	\[
		\lim_{n\to\infty}\sup_{(s,x)\in\widetilde{E}_n,s\in[0,T ]}|\widetilde{A}^{(n)}f(s,x)-\widetilde{A}f(s,x)|\ =\ 0,\qquad f\in\widetilde{D}.
	\]
	From \cite[IV, Corollary 8.7]{ethierkurtz} it follows that $\widetilde{X}^{(n)}\to\widetilde{X}$ in $D_{\widetilde{E}}[0,\infty)$, hence $X^{(n)}\to X$ in $D_{\rz}[0,\infty)$ as $n\to\infty$.
\hfill$\Box$\end{proof}

\begin{remark}
	Assumption (\ref{eq_res_kappa}) is only used in Part 1 of the proof of Theorem \ref{thm_main}, whereas Part 2 remains correct for all measure $\Xi$ satisfying $\Xi(\Delta^*\cup\{0\})=0$ and $\int_{2}^{\infty}(\gamma(u))^{-1}{\rm d}u=\infty$.
\end{remark}

\subsubsection{Proofs concerning the fixation line}\label{sec_proof_fix_line}

Propositions \ref{prop_normalizing_fix_line} and \ref{prop_normalizing_fix_line_kappa} treat the normalizing function $w(x,t)$ for the fixation line, implicitly defined via $\int_{x}^{w(x,t)}(\gamma(u))^{-1}{\rm d}u=t$. Proposition \ref{prop_normalizing_fix_line} verifies the existence of $w$.

\begin{proof} (of Proposition \ref{prop_normalizing_fix_line})
	Suppose that $\int_2^\infty(\gamma(u))^{-1}{\rm d}u=\infty$. Fix $x>1$. The function $F_x:[x,\infty)\to\rz$, defined by $F_x(y):=\int_{x}^{y}(\gamma(u))^{-1}{\rm d}u$, $y\in[x,\infty)$, is continuous, strictly increasing and satisfies $F_x(x)=0$ and $\lim_{y\to\infty}F_x(y)=\infty$. Thus, the solution $w(x,t)$ to the equation $t=F_x(w(x,t))=\int_{x}^{w(x,t)}(\gamma(u))^{-1}{\rm d}u$, exists, lies in the interval  $[x,\infty)$ and is unique for every $t\ge0$. The function $F_x$ is differentiable and $F_x'(y)=(\gamma(y))^{-1}>0$, $y\in[x,\infty),$ and, as a consequence, the inverse $F_x^{-1}:[0,\infty)\to[x,\infty)$ exists, is differentiable and $(F_x^{-1})'(t)=\gamma(F_x^{-1}(t))$, $t\ge0$. Clearly, $w(x,t)=F_x^{-1}(t)$ such that
	\[
		\frac{{\rm d}}{{\rm d}t}w(x,t)\ =\ \gamma(w(x,t)),\qquad t\ge0, x>1.
	\]
	The formula for $\frac{{\rm d}}{{\rm d}x}w(x,t)$ follows from differentiation of both sides of the integral equation in (\ref{eq_int_normalizing_fix_line}) with respect to $x$.
\hfill$\Box$\end{proof}

The proof of Proposition \ref{prop_normalizing_fix_line_kappa} could be copied from the respective one for the block counting process, the proof given instead uses the fact that $v(.,t)$ and $w(.,t)$ are inverse.

\begin{proof} (of Proposition \ref{prop_normalizing_fix_line_kappa})
	We first prove (ii), and then (i) and (iii). Note that the situation of (i) is a special case of (ii) with $\kappa=0$. Fix $t\ge0$. According to Proposition \ref{prop_normalizing} there exists a slowly varying function $L_t:[1,\infty)\to(0,\infty)$ such that $v(x,t)=x^{e^{-\kappa t}}L_t(x)$, $x\ge1$. As the function $w(.,t)$ is the inverse of $v(.,t)$, it is regularly varying with index $e^{\kappa t}$. More precisely, it follows from \cite[Proposition 1.5.15]{binghamgoldieteugels} applied with $f(x):=v(x,t)$, $a:=e^{-\kappa t}$, $b:=1$ and $l(x):=L_t(x),$ $x\ge 1,$ that $w(x,t)\sim x^{e^{\kappa t}}L_t^{\#,0}(x^{e^{\kappa t}})$ as $x\to\infty$, where $L_t^{\#,0}$ is the de Bruijn conjugate of the slowly varying function $x\mapsto (L_t(x))^{e^{\kappa t}}$, $x\ge1$, i.e., a slowly varying function satisfying $\lim_{x\to\infty}L_t^{\#,0}(x(L_t(x))^{e^{\kappa t}})(L_t(x))^{e^{\kappa t}}=1$. See e.g. \cite[Theorem 1.5.13]{binghamgoldieteugels} for a definition of the de Bruijn conjugate of slowly varying functions. The function $L_t^{\#}$, defined via $w(x,t)=x^{e^{\kappa t}}L_t^{\#}(x),$ $x\ge 1,$ is asymptotically equal to the slowly varying function $L_t^{\#,0}(x^{e^{\kappa t}})$, thus slowly varying itself, which completes the proof of (ii).

	(i) Assume that $\Xi(\{0\})=0$ and $\mu=\int_\Delta|u|\nu({\rm d}u)<\infty$, and recall that $\kappa=0$. Proposition \ref{prop_normalizing} states that $\lim_{x\to\infty}L_t(x)=e^{-\mu t}$. We can thus choose $L_t^{\#,0}(x):=e^{\mu t}$, $x\ge1$. From $L_t^{\#}(x)\sim L_t^{\#,0}(x)=e^{\mu t}$ it follows that $w(x,t)\sim xe^{\mu t}$ as $x\to\infty$.
	
	(iii) As seen in the proof of Proposition \ref{prop_normalizing_constant_L}, there exists a slowly varying function $L_1:(1,\infty)\to(0,\infty)$ such that $\gamma_1(x)=\kappa x\log x+ x\log L_1(x),$ $x>1$. The function $\gamma_1$ is continuous and positive on $(1,\infty)$ and $\int_{2}^{\infty}(\gamma_1(u))^{-1}{\rm d}u=\infty$. The proof of Proposition \ref{prop_normalizing_fix_line} shows that the scaling $w_1(x,t)$, defined by (\ref{eq_int_normalizing_fix_line}) with $\gamma_1$ in place of $\gamma$, exists for $x\ge 1$ and $t\ge 0$. Fix $t\ge 0$. According to Proposition \ref{prop_normalizing_constant_L}, the scaling $v_1(x,t)$, defined by the integral equation in (\ref{eq_integral_normalizing}) with $\gamma_1$ in place of $\gamma$, exists for $x\ge x_0(t)$, where $x_0(t)>1$. The proof of Proposition \ref{prop_normalizing} shows the existence of a slowly varying function $L_{t,1}$ 
	such that $v_1(x,t)=x^{e^{-\kappa t}}L_{t,1}(x)$ for $x\ge x_0(t)$. Here we used that the map $x\mapsto\gamma_1(x)/x,$ $x> 1,$ and, hence, the function $L_1$ are nondecreasing if $\kappa=0$.  The scalings $v_1(.,t)$ and $w_1(.,t)$ are obviously inverse (on suitable domains). From Part (ii) it follows that there exists a slowly varying function $L_{t,1}^{\#}$ 
	such that $w_1(x,t)=x^{e^{\kappa t}}L_{t,1}^{\#}(x)$. From $v(x,t)\sim v_1(x,t)$ it follows that $L_t(x)\sim L_{t,1}(x)$ and $x=w(v(x,t))\sim w(v_1(x,t))=x(L_t(x))^{e^{\kappa t}}L_{t}^{\#}(v_1(x,t))$ as $x\to\infty$ and $x=w_1(v_1(x,t),t)=x(L_{t,1}(x))^{e^{\kappa t}}L_{t,1}^{\#}(v_1(x,t))$ for $x\ge x_0(t)$. Hence, $L_{t,1}^{\#}(v_2(x,t))\sim L_{t,2}^{\#}(v_2(x,t))$, consequently $L_{t,1}^{\#}(x)\sim L_{t,2}^{\#}(x)$ and we finally have $w_1(x,t)\sim w_2(x,t)$ as $x\to\infty$. \hfill$\Box$
\end{proof}

We proceed to prove the convergence of the scaled fixation line. The involved state spaces and semigroups are denoted by the same symbols as for the block counting process.

\begin{proof} (of Theorem \ref{thm_fix_line})
	Define $Y_t^{(n)}:=\log L_t^{(n)}-\log w(n,t)$ for $n\in\nz$ and $t\ge0$. We start by proving the convergence of the one-dimensional distributions. Fix $t\ge0$, $x\in\rz$ and write $k:=\lceil e^xw(n,t)\rceil\in\nz$. Note that $Y_t\overset{\text{d}}{=}-e^{\kappa t}S_t$. By duality (Eq. (\ref{eq_intro_duality})),
	\[
		\pr(Y_t^{(n)}\ge x)\ =\ \pr(L_t^{(n)}\ge k)\ =\ \pr(N_t^{(k)}\le n)\ =\ \pr\big(\log N_t^{(k)}-\log v(k,t)\le \log(n/v(k,t))\big).
	\]
	By Proposition \ref{prop_normalizing}, the function $v(.,t)$ varies regularly with index $e^{-\kappa t}$. From $\lim_{n\to\infty}w(n,t)=\infty$ it hence follows that $n/v(k,t)=v(w(n,t),t)/v(\lceil e^xw(n,t)\rceil,t)\to e^{-xe^{-\kappa t}}$ as $n\to\infty$. Theorem \ref{thm_main} implies that
	\begin{equation}
		\lim_{n\to\infty}\pr(Y_t^{(n)}\ge x)\ =\ \pr(S_t\le -xe^{-\kappa t})\ =\ \pr(Y_t\ge x)
	\label{eq_proof_fix_line_local_01}
	\end{equation}
	for $-xe^{-\kappa t}$ in the set $C_{{S_t}}$ of continuity points of $S_t$. From (\ref{eq_proof_fix_line_local_01}) we obtain the weak convergence of $Y_t^{(n)}$ to $Y_t$ as $n\to\infty$ for each $t\ge0$, since $-xe^{-\kappa t}\in C_{S_t}$ if and only if $x\in C_{Y_t}$.
	
	The time-space processes $\widetilde{Y}^{(n)}:=(t,Y_t^{(n)})_{t\ge0}$, $n\in\nz$, and $\widetilde{Y}:=(t,X_t)_{t\ge0}$ are time-homogeneous Markov processes with state spaces $\widetilde{E}_{n}=\{(s,x) : s\ge0,e^{x}w(n,s)\in\{n,n+1,\ldots\}\}$ and $\widetilde{E}=[0,\infty)\times\rz$. Set $k:=k(s,x,n):=e^xw(n,s)\in\{n,n+1,\ldots\}$ for $(s,x)\in\widetilde{E}_n$ and $n\in\nz$. The semigroups $(\widetilde{T}^{(n)}_t)_{t\ge0}$ and $(\widetilde{T}_t)_{t\ge0}$ of $\widetilde{Y}^{(n)}$ and $\widetilde{Y}$ are given by
	\begin{eqnarray*}
		\widetilde{T}_t^{(n)}f(s,x)& := &\me(f(s+t,Y_{s+t}^{(n)})|Y_s^{(n)}=x)\ =\ \me(f(s+t,\log L_t^{(k)}-\log w(n,s+t))\\& = &\me(f(s+t,\log(w(k,t)/w(n,s+t))+Y_t^{(k)})),\qquad (s,y)\in\widetilde{E}_{n},
	\end{eqnarray*}
	and
	\[
		\widetilde{T}_tf(s,x)\ :=\ \me(f(s+t,Y_{s+t})|Y_s=x)\ =\ \me(f(s+t,e^{\kappa t}x+Y_t)),\qquad (s,x)\in\widetilde{E},
	\]
	for $f\in B(\widetilde{E}),t\ge 0$ and $n\in\nz$. Fix $t>0$ and first let $f\in B(\widetilde{E})$ be of the form $f(s,x)=g(s)h(x)$, $(s,x)\in\widetilde{E}$, where $g\in B([0,\infty))$ and $h\in C_c(\rz)$. Clearly, $\widetilde{T}^{(n)}_tf(s,x)=g(s+t)\me(h(\log(w(k,t)/w(n,s+t))+Y_t^{(k)}))$, $(s,x)\in \widetilde{E}_n,n\in\nz$, and $\widetilde{T}_tf(s,x)=g(s+t)\me(h(e^{\kappa t}x+Y_t))$, $(s,x)\in\widetilde{E}$. If we are able to show that
	\begin{equation}
		\lim_{n\to\infty}\sup_{(s,x)\in\widetilde{E}_n}\big|\me\big(h(\log(w(k,t)/w(n,s+t))+Y_t^{(k)})\big)-\me(h(e^{bt}x+Y_t))\big|\ =\ 0,
	\label{eq_proof_fix_line_local_06}
	\end{equation}	
	then
	\begin{equation}
		\lim_{n\to\infty}\sup_{(s,x)\in\widetilde{E}_n}|\widetilde{T}_t^{(n)}f(s,x)-\widetilde{T}_tf(s,x)|\ =\ 0.
	\label{eq_Thm_4_local_4}	
	\end{equation}
	The algebra of functions $f\in B(\widetilde{E})$ of the form $f(s,x)=\sum_{i=1}^{l}g_i(s)h_i(x)$, $(s,x)\in\widetilde{E}$, where $l\in\nz,g_i\in B([0,\infty))$ and $h_i\in C_c(\rz)$, separates points and vanishes nowhere. According to the Stone--Weierstrass theorem for locally compact spaces (see e.g. \cite{debranges59}) it is a dense subset of $B(\widetilde{E})$ such that (\ref{eq_Thm_4_local_4}) holds for $f\in B(\widetilde{E})$. \cite[IV, Theorem 2.11]{ethierkurtz} states that
	$\widetilde{Y}^{(n)}\to \widetilde{Y}$ in $D_{\widetilde{E}}[0,\infty)$, hence $Y^{(n)}\to Y$ in $D_{\rz}[0,\infty)$ as $n\to\infty$. It remains to verify (\ref{eq_proof_fix_line_local_06}).
	
	From
	\[
		s+t\ =\ \int_x^{w(x,s)}\frac{{\rm d}u}{\gamma(u)}\ +\ \int_{w(x,s)}^{w(w(x,s),t)}\frac{{\rm d}u}{\gamma(u)}\ =\ \int_x^{w(w(x,s),t)}\frac{{\rm d}u}{\gamma(u)}
	\]
	it follows that $w(x,s+t)=w(w(x,s),t)$ for $(s,x)\in\widetilde{E}$. By Proposition \ref{prop_normalizing_fix_line}, there exists a slowly varying function $L_t^{\#}:[1,\infty)\to(0,\infty)$ such that $w(x,t)=x^{e^{\kappa t}}L_t^{\#}(x)$ for $x\ge1$. Applying Proposition \ref{prop_app_slow_variation} to the right-side of
	\[
		\frac{w(k,t)}{w(n,s+t)}\ =\ \frac{w(e^xw(n,s),t)}{w(w(n,s),t)}\ =\ e^{e^{\kappa t}x}\frac{L_t^{\#}(e^xw(n,s))}{L_t^{\#}(w(n,s))},\qquad (s,x)\in\widetilde{E}_n,n\in\nz,
	\]
	provides
	\[
		\lim_{x\to\infty}\inf_{s:(s,x)\in\widetilde{E}_n,n\in\nz}\log\frac{w(k,t)}{w(n,s+t)}\ =\ \infty,\quad \lim_{x\to-\infty}\sup_{s:(s,x)\in\widetilde{E}_n,n\in\nz}\log\frac{w(k,t)}{w(n,s+t)}\ =\ -\infty.
	\]
	The family $\{Y_t^{(k)}:k\in\nz\}$ is tight, due to the convergence $Y_t^{(k)}\to Y_t$ in distribution as $k\to\infty$ and Prokhorov's theorem. By dominated convergence and since $h$ has compact support,
	\begin{eqnarray}	\label{eq_proof_fix_line_local_03}
		&&\lim_{x\to\infty}\sup_{s:(s,x)\in\widetilde{E}_n,n\in\nz}\big|\me\big(h(\log(w(k,t)/w(n,s+t))+Y_t^{(k)})\big)\big|\\&&~~~~~\ =\ 	\lim_{x\to-\infty}\sup_{s:(s,x)\in\widetilde{E}_n,n\in\nz}\big|\me\big(h(\log(w(k,t)/w(n,s+t))+Y_t^{(k)})\big)\big|\ =\ 0,\nonumber
	\end{eqnarray}
	such as
	\begin{equation}
		\lim_{x\to\infty}\me(h(e^{\kappa t}x+Y_t))\ =\ \lim_{x\to-\infty}\me(h(e^{\kappa t}x+Y_t))\ =\ 0.
	\label{eq_proof_fix_line_local_04}
	\end{equation}
	For any compact interval $K\subset\rz$ we have that, by the uniform convergence theorem for slowly varying functions \cite[Theorem 1.5.2]{binghamgoldieteugels},	
	\[
		\lim_{n\to\infty}\sup_{(s,x)\in\widetilde{E}_n,x\in K}\bigg| \log\frac{w(k,t)}{w(n,s+t)}-e^{\kappa t}x\bigg|\ =\ \lim_{n\to\infty}\sup_{(s,x)\in\widetilde{E}_n,x\in K}\bigg|\log \frac{L_t^{\#}(e^xw(n,s))}{L_t^{\#}(w(n,s))}\bigg|\ =\ 0.
	\]
	The function $h$ is uniformly continuous. Note that $\lim_{n\to\infty}\inf_{(s,x)\in\widetilde{E}_n,x\in K}k(s,x,n)=\infty$. From the convergence $Y_t^{(k)}\to Y_t$ in distribution as $k\to\infty$ it hence follows that
	\begin{equation}
		\lim_{n\to\infty}\sup_{(s,x)\in\widetilde{E}_n,x\in K}\big|\me\big(h(\log(w(k,t)/w(n,s+t))+Y_t^{(k)})\big)-\me(h(e^{\kappa t}x+Y_t))\big|\ =\ 0.
	\label{eq_proof_fix_line_local_05}
	\end{equation}
	Finally, Eqs. (\ref{eq_proof_fix_line_local_03}), (\ref{eq_proof_fix_line_local_04}) and (\ref{eq_proof_fix_line_local_05}) imply (\ref{eq_proof_fix_line_local_06}). The proof is complete.\hfill$\Box$
\end{proof}

\subsection{Appendix}
We collect some fundamental results concerning the model described in Section \ref{sec_block_counting} involving an infinite number of urns. Let $u\in\Delta$. Recall that $X_i(n,u)$ denotes the number of balls in urn $i\in\nz_0$ after $n$ balls have been allocated. Let $K(n,u):=\sum_{i\ge 1}1_{\{X_i(n,u)>0\}}$ denote the number of occupied urns (disregarding urn $0$). The following law of large numbers result holds.
\begin{lemma}
   For all $u\in\Delta$, $K(n,u)/\me(K(n,u))\to 1$ almost surely as
   $n\to\infty$.
   \label{lem_conv_K}
\end{lemma}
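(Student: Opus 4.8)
The plan is to follow the classical route for infinite occupancy schemes: establish concentration of $K(n,u)$ around its mean through a variance bound, then upgrade to almost sure convergence by a subsequence argument combined with monotonicity. Throughout write $m_n:=\me(K(n,u))=\sum_{i\ge 1}(1-(1-u_i)^n)$, which is finite (indeed $m_n\le\sum_{i\ge 1}nu_i\le n$), nondecreasing in $n$, and has increments bounded by $1$, since $m_{n+1}-m_n=\sum_{i\ge 1}u_i(1-u_i)^n\le|u|\le 1$.

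First I would dispose of the degenerate case. If only finitely many of the $u_i$ are strictly positive, say $u_i>0$ exactly for $i\in\{1,\dots,\ell\}$ with $\ell\in\nz_0$, then $K(n,u)\le\ell$ for all $n$, $K(n,u)\uparrow\ell$ almost surely as $n\to\infty$ (each urn with $u_i>0$ receives a ball eventually), and likewise $m_n\uparrow\ell$; for $\ell\ge 1$ the ratio tends to $1$, and for $\ell=0$ (that is, $u=0$) the statement reduces to a trivial identity. Hence from now on assume infinitely many $u_i$ are positive, which forces $m_n\to\infty$ as $n\to\infty$.

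The key step is the variance estimate $\mathrm{Var}(K(n,u))\le m_n$. For $i\ne j$ a direct computation gives
\[
\mathrm{Cov}\big(1_{\{X_i(n,u)>0\}},1_{\{X_j(n,u)>0\}}\big)\ =\ (1-u_i-u_j)^n-\big((1-u_i)(1-u_j)\big)^n\ \le\ 0,
\]
since $1-u_i-u_j\le(1-u_i)(1-u_j)$. Therefore $\mathrm{Var}(K(n,u))\le\sum_{i\ge 1}\mathrm{Var}(1_{\{X_i(n,u)>0\}})\le\sum_{i\ge 1}\me(1_{\{X_i(n,u)>0\}})=m_n$, and Chebyshev's inequality yields $\pr(|K(n,u)-m_n|>\varepsilon m_n)\le 1/(\varepsilon^2 m_n)$ for every $\varepsilon>0$.

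Finally I would pass to a subsequence and interpolate. Put $n_k:=\inf\{n\in\nz:m_n\ge k^2\}$; because the increments of $(m_n)$ are at most $1$, one has $k^2\le m_{n_k}<k^2+1$, so in particular $m_{n_{k+1}}/m_{n_k}\to 1$. Summing the Chebyshev bound gives $\sum_k\pr(|K(n_k,u)-m_{n_k}|>\varepsilon m_{n_k})\le\sum_k 1/(\varepsilon^2k^2)<\infty$, whence, by the Borel--Cantelli lemma and letting $\varepsilon\downarrow 0$ along a countable sequence, $K(n_k,u)/m_{n_k}\to 1$ almost surely. For general $n$ with $n_k\le n\le n_{k+1}$, monotonicity of both $n\mapsto K(n,u)$ and $n\mapsto m_n$ gives
\[
\frac{K(n_k,u)}{m_{n_{k+1}}}\ \le\ \frac{K(n,u)}{m_n}\ \le\ \frac{K(n_{k+1},u)}{m_{n_k}},
\]
and both outer terms converge to $1$ almost surely since $m_{n_k}/m_{n_{k+1}}\to 1$; the squeeze gives $K(n,u)/m_n\to 1$ almost surely. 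The only steps requiring genuine care are the negative-correlation identity that powers the variance bound and the observation that $(m_n)$ has bounded increments, which is precisely what makes the subsequence-plus-monotonicity interpolation close up (a plain Azuma-type bound would be too weak when $m_n$ grows slowly); everything else is routine.
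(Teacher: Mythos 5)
Your proof is correct and follows essentially the same route as the paper's: a variance bound of order $\me(K(n,u))$, Chebyshev plus Borel--Cantelli along a subsequence $n_k$ with $\me(K(n_k,u))\approx k^2$, and a monotone sandwich to interpolate between subsequence points. The only (harmless) deviations are that you derive the variance bound from the negative correlation of the occupancy indicators rather than from the inequality $\mathrm{Var}(K(n))\le\Phi(2n)-\Phi(n)\le\Phi(n)$ used in the paper, and that you explicitly treat the degenerate case where only finitely many $u_i$ are positive (so that $\me(K(n,u))$ stays bounded and the subsequence construction is unavailable), a case the paper passes over silently.
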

\begin{proof}
    We proceed as in the proof of \cite[Theorem 1]{gnedinhansenpitman07}. Fix $u\in\Delta$ and write $K(n):=K(n,u)$ for convenience. Define $\Phi:[0,\infty)\to[0,\infty)$ via $\Phi(x):=\sum_{i\ge 1}(1-(1-u_i)^x)$, $x\ge 0$. Note that $\Phi(0)=0$, $\Phi(1)=|u|\le 1$ and $\me(K(n))=\Phi(n)$, $n\in\nz$. The function $\Phi$ is nondecreasing, concave and differentiable on $(0,\infty)$ with derivative $\Phi'(x)=\sum_{i\ge 1}(1-u_i)^x(-\log(1-u_i))$. In particular, for all $x\ge 1$,
    $\Phi'(x)\le\Phi'(1)=\sum_{i\ge 1}(1-u_i)(-\log(1-u_i))\le\sum_{i\ge 1}u_i\le 1$.
    Thus, for each $m\in\nz$ there exists $n_m\in\nz$ such that $m^2\le\Phi(n_m)\le m^2+1$. Tschebyscheff's inequality together with
    ${\rm Var}(K(n))\le\Phi(2n)-\Phi(n)\le\Phi(n)$ yields
    \[
    \pr\bigg(\bigg|\frac{K(n_m)}{\Phi(n_m)}-1\bigg|\ge\varepsilon\bigg)
    \ \le\ \frac{{\rm Var}(K(n_m))}{\varepsilon^2(\Phi(n_m))^2}
    \ \le\ \frac{1}{\varepsilon^2\Phi(n_m)}
    \ \le\ \frac{1}{\varepsilon^2 m^2}
    \]
    for all $m\in\nz$ and $\varepsilon>0$. Thus, $\sum_{m\ge 1}\pr(|K(n_m)/\Phi(n_m)-1|\ge\varepsilon)<\infty$ for all $\varepsilon>0$. By the Borel--Cantelli lemma it follows that $K(n_m)/\Phi(n_m)\to 1$ almost surely as $m\to\infty$.

    For $n\in\nz$ with $n_m\le n\le n_{m+1}$ the monotonicity inequalities $K(n_m)\le K(n)\le K(n_{m+1})$ and $\Phi(n_m)\le\Phi(n)\le\Phi(n_{m+1})$ hold, which allows to sandwich the fraction $K(n)/\Phi(n)$ via
    \[
    \frac{K(n_m)}{\Phi(n_{m+1})}\ \le\ \frac{K(n)}{\Phi(n)}\ \le\ \frac{K(n_{m+1})}{\Phi(n_m)},
    \]
    where both sides converge to $1$ almost surely, since $\Phi(n_m)/\Phi(n_{m+1})\to 1$.\hfill$\Box$
\end{proof}
The following two results deal with the random variables $Y(n,u):=X_0(n,u)+K(n,u)$ defined in (\ref{eq_y}). Lemma \ref{lem_conv_Y} concerns
the limiting behavior of $Y(n,u)/n$ as $n\to\infty$.
\begin{lemma}
    For all $u=(u_1,u_2,\ldots)\in\Delta$, $Y(n,u)/n\to u_0$ almost surely as $n\to\infty$,
    where $u_0:=1-|u|:=1-\sum_{i\ge 1}u_i$.
    \label{lem_conv_Y}
\end{lemma}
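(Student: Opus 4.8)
The plan is to split $Y(n,u)$ into its two natural constituents, $Y(n,u)=X_0(n,u)+K(n,u)$, and to show separately that $X_0(n,u)/n\to u_0$ and $K(n,u)/n\to 0$, both almost surely; adding the limits then gives the assertion. The degenerate case $u=0$ (equivalently $u_0=1$) is trivial, since then every ball lands in urn $0$, so $X_0(n,0)=n$ and $K(n,0)=0$ for all $n$, whence $Y(n,0)/n=1=u_0$. I may therefore assume $u\ne 0$ in what follows.

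For the first piece, note that $X_0(n,u)=\sum_{j=1}^n 1_{\{Z_j\in J_0\}}$ is a sum of i.i.d.\ Bernoulli variables with success probability $|J_0|=u_0$, so $X_0(n,u)$ is binomially distributed with parameters $n$ and $u_0$. The strong law of large numbers immediately yields $X_0(n,u)/n\to u_0$ almost surely as $n\to\infty$.

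For the second piece I would invoke Lemma \ref{lem_conv_K}, which gives $K(n,u)/\me(K(n,u))\to 1$ almost surely, where $\me(K(n,u))=\Phi(n)=\sum_{i\ge 1}(1-(1-u_i)^n)$; here $\Phi(n)>0$ for every $n\in\nz$ because $u\ne 0$. It then remains only to verify that $\Phi(n)/n\to 0$. Writing $\Phi(n)/n=\sum_{i\ge 1}(1-(1-u_i)^n)/n$ and using the elementary union-bound inequality $1-(1-x)^n\le nx$ for $x\in[0,1]$, each summand is dominated by $u_i$, and $\sum_{i\ge 1}u_i=|u|<\infty$; since each summand tends to $0$ as $n\to\infty$, dominated convergence gives $\Phi(n)/n\to 0$. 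Consequently $K(n,u)/n=(K(n,u)/\Phi(n))\cdot(\Phi(n)/n)\to 1\cdot 0=0$ almost surely.

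Combining the two pieces, $Y(n,u)/n=X_0(n,u)/n+K(n,u)/n\to u_0+0=u_0$ almost surely, which is the claim. There is no genuine obstacle here: the substantive input, namely the concentration of $K(n,u)$ about its mean, is exactly Lemma \ref{lem_conv_K}, and the only additional ingredient is the sublinear growth $\Phi(n)=o(n)$, which follows at once from the summable bound $1-(1-u_i)^n\le n u_i$.
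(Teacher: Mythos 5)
Your proof is correct and follows essentially the same route as the paper: the decomposition $Y(n,u)=X_0(n,u)+K(n,u)$, the strong law for the binomial count $X_0(n,u)$, Lemma \ref{lem_conv_K} for the concentration of $K(n,u)$ about $\Phi(n)=\me(K(n,u))$, and dominated convergence with the bound $1-(1-u_i)^n\le nu_i$ to get $\Phi(n)/n\to 0$. Your explicit treatment of the degenerate case $u=0$ (where $\Phi(n)=0$ and the ratio $K(n,u)/\me(K(n,u))$ is undefined) is a small point of extra care not spelled out in the paper.
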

\begin{proof}
    Fix $u\in\Delta$. We have $Y(n,u)=X_0(n,u)+K(n,u)$, $n\in\nz$. Clearly, $X_0(n,u)/n\to u_0$ almost surely as $n\to\infty$, since $X_0(n,u)$ has a binomial distribution with parameters $n$ and $u_0$. By Lemma \ref{lem_conv_K},
    $K(n,u)/\me(K(n,u))\to 1$ almost surely as $n\to\infty$. Moreover,
    \[
    \frac{\me(K(n,u))}{n}\ =\ \sum_{i\ge 1}\frac{1-(1-u_i)^n}{n}\ \to\ 0
    \]
    as $n\to\infty$ by dominated convergence, since $(1-(1-u_i)^n)/n\le 1/n\to 0$ and $(1-(1-u_i)^n)/n\le u_i$, where the dominating map $i\mapsto u_i$ is integrable with respect to the counting measure on $\nz$. Thus,
    \[
    \frac{K(n,u)}{n}\ =\ \frac{K(n,u)}{\me(K(n,u))}\frac{\me(K(n,u))}{n}\ \to\ 1\cdot 0\ =\ 0
    \]
    almost surely as $n\to\infty$. Therefore, $Y(n,u)/n\to u_0$ almost surely as $n\to\infty$.\hfill$\Box$
\end{proof}
The following result (Lemma \ref{lem_bound_Y}) is used in the proof of the main convergence theorem (Theorem \ref{thm_main}). It presents bounds for particular moments of the random variable $Y(n,u)$ defined in (\ref{eq_y}). Lemma 18 of \cite{limic10} provides similar bounds.
\begin{lemma}
	There exist constants $D_1,D_2\in\rz$ such that, for all $u\in\Delta$,
	\[
    \sup_{n\in\nz}\me\bigg(\bigg(\frac{Y(n,u)}{n}-(1-|u|)\bigg)^2\bigg)\ \le\ D_1|u|^2
    \quad\mbox{and}\quad
    \sup_{n\in\nz}\me\bigg(\bigg(\frac{Y(n,u)}{n}-1\bigg)^2\bigg)\ \le\ D_2|u|^2.
	\]
	\label{lem_bound_Y}
\end{lemma}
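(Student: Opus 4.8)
The plan is to split $Y(n,u)$ into a deterministic part and a ``collision'' part. Since every ball lands in exactly one urn, $X_0(n,u)+\sum_{i\ge 1}X_i(n,u)=n$, and hence, with $K(n,u)=\sum_{i\ge 1}1_{\{X_i(n,u)>0\}}$ and $N(n,u):=\sum_{i\ge 1}X_i(n,u)=n-X_0(n,u)$,
\[
Y(n,u)\ =\ X_0(n,u)+K(n,u)\ =\ n-M(n,u),\qquad M(n,u)\ :=\ N(n,u)-K(n,u)\ =\ \sum_{i\ge 1}\big(X_i(n,u)-1_{\{X_i(n,u)>0\}}\big).
\]
Each summand $X_i(n,u)-1_{\{X_i(n,u)>0\}}$ is a nonnegative integer, so $0\le M(n,u)\le N(n,u)$; since $N(n,u)$ is binomial with parameters $n$ and $|u|$, this gives $0\le\me(M(n,u))\le n|u|$. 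Writing the two quantities in bias--variance form, $\me((Y(n,u)/n-(1-|u|))^2)=n^{-2}\mathrm{Var}(M(n,u))+(|u|-n^{-1}\me(M(n,u)))^2$ and $\me((Y(n,u)/n-1)^2)=n^{-2}\mathrm{Var}(M(n,u))+n^{-2}\me(M(n,u))^2$. Because $|u|-n^{-1}\me(M(n,u))\in[0,|u|]$ and $n^{-1}\me(M(n,u))\in[0,|u|]$, both deterministic terms are at most $|u|^2$, so the whole lemma reduces to the uniform bound $\mathrm{Var}(M(n,u))\le n^2|u|^2$.

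For the variance I would exploit that the excess counts of distinct urns are negatively correlated. Fix $i\ne j$. Conditionally on $X_j(n,u)=m$ the law of $X_i(n,u)$ is binomial with parameters $n-m$ and $u_i/(1-u_j)$, hence stochastically nonincreasing in $m$; since $x\mapsto x-1_{\{x>0\}}$ is nondecreasing on $\nz_0$, the map $m\mapsto\me(X_i(n,u)-1_{\{X_i(n,u)>0\}}\mid X_j(n,u)=m)$ is nonincreasing, whereas $m\mapsto m-1_{\{m>0\}}$ is nondecreasing. Conditioning on $X_j(n,u)$ and applying the FKG (Chebyshev sum) inequality for two oppositely monotone functions of a single real random variable yields $\mathrm{Cov}(X_i(n,u)-1_{\{X_i(n,u)>0\}},\,X_j(n,u)-1_{\{X_j(n,u)>0\}})\le 0$; alternatively one may simply quote the negative association of multinomial vectors. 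Therefore
\[
\mathrm{Var}(M(n,u))\ \le\ \sum_{i\ge 1}\mathrm{Var}\big(X_i(n,u)-1_{\{X_i(n,u)>0\}}\big)\ \le\ \sum_{i\ge 1}\me\big((X_i(n,u)-1_{\{X_i(n,u)>0\}})^2\big).
\]
Using the elementary inequality $(x-1_{\{x>0\}})^2\le x(x-1)$ for $x\in\nz_0$ (for $x\ge 1$ the difference equals $x-1\ge 0$) together with the factorial moment $\me(X_i(n,u)(X_i(n,u)-1))=n(n-1)u_i^2$ of the $\mathrm{Bin}(n,u_i)$ variable $X_i(n,u)$, one obtains $\mathrm{Var}(M(n,u))\le n(n-1)\sum_{i\ge 1}u_i^2=n(n-1)(u,u)\le n^2|u|^2$, the last step using $(u,u)=\sum_{i\ge 1}u_i^2\le(\sum_{i\ge 1}u_i)^2=|u|^2$. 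Combined with the first paragraph, both suprema are bounded by $2|u|^2$, so the lemma holds with $D_1=D_2=2$.

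The step that genuinely requires the structure of the model is the negative correlation of the excess counts: a naive expansion of $\mathrm{Var}(M(n,u))$ into single- and cross-terms produces cross-terms of order $n^4(u,u)^2$, which is far larger than $n^2|u|^2$ whenever $|u|$ is not small compared with $1/n$, so the cancellation from negative dependence is essential rather than cosmetic. Everything else is routine bookkeeping with binomial moments; absolute convergence of the double series of covariances is automatic since $\sum_{i\ge 1}\me((X_i(n,u)-1_{\{X_i(n,u)>0\}})^2)\le n^2(u,u)<\infty$.
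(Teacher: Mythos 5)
Your proof is correct, but it takes a genuinely different route from the paper's. You reduce everything to the single random variable $M(n,u)=\sum_{i\ge 1}(X_i(n,u)-1_{\{X_i(n,u)>0\}})$ via $Y(n,u)=n-M(n,u)$, observe that both deterministic (bias) terms are at most $|u|^2$ because $0\le\me(M(n,u))\le n|u|$, and then bound $\mathrm{Var}(M(n,u))$ by discarding the cross terms through negative association of the multinomial occupancy counts and controlling the diagonal terms by the factorial moment $\me(X_i(X_i-1))=n(n-1)u_i^2$. The paper instead expands $\me((X_0/n-u_0+K/n)^2)$ directly, using the exact identities $\me(K)=\sum_i(1-(1-u_i)^n)$, $\me(X_0K)=nu_0\sum_i(1-(1-u_i)^{n-1})$ and $K^2=K+\sum_{i\ne j}1_{\{X_iX_j>0\}}$, and bounds each piece with Bernoulli-type inequalities, in particular $1-(1-u_i)^n-(1-u_j)^n+(1-u_i-u_j)^n\le n(n-1)u_iu_j$ for the cross terms; this keeps the argument entirely elementary at the cost of more bookkeeping, and yields $D_1=4$, $D_2=5$ versus your $D_1=D_2=2$. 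One remark: your closing claim that the negative dependence is \emph{essential} is overstated. Within your own decomposition, Cauchy--Schwarz together with $\me((X_i-1_{\{X_i>0\}})^2)\le n(n-1)u_i^2$ already gives $|\mathrm{Cov}(f(X_i),f(X_j))|\le n(n-1)u_iu_j$, whose sum over $i\ne j$ is at most $n^2|u|^2$ (not $n^4(u,u)^2$), so dropping the FKG step would only worsen the constant; and the paper's direct treatment of $\sum_{i\ne j}\pr(X_iX_j>0)$ shows the same. This does not affect the validity of your argument, but if you keep the negative-association step you should either cite negative association of (infinite) multinomial vectors or retain your conditioning argument, and note that the interchange $\mathrm{Var}(M)=\sum_{i,j}\mathrm{Cov}(\cdot,\cdot)$ is justified since $0\le M\le n$ and the double series converges absolutely.
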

\begin{proof}
	Fix $n\in\nz$ and $u\in\Delta$. Define $u_0:=1-|u|$. We omit the parameter $(n,u)$ and write (\ref{eq_y}) as $Y=X_0+K$, where $K:=K(n,u):=\sum_{i\ge 1}1_{\{X_i(n,u)>0\}}$ denotes the number of occupied urns (disregarding urn $0$). Furthermore, define  $\widetilde{Y}:=Y/n-u_0=X_0/n-u_0+K/n$. Calculations that are similar to the following (but come from a different motivation) are carried out in the proof of \cite[Lemma 6.1]{moehle21}. The formulas for $\me(K)$ and $\me(X_0K)$ can be found there. We have
	\[
		\widetilde{Y}^2\ =\ \bigg(\frac{X_0}{n}-u_0+\frac{K}{n}\bigg)^2\ =\ \bigg(\frac{X_0}{n}-u_0\bigg)^2\ +\ \frac{2X_0K}{n^2}\ -\ \frac{2u_0K}{n}\ +\ \frac{K^2}{n^2}.
	\]
	Recall that $X_0$ has a binomial distribution with parameters $n$ and $u_0$. In particular, $\me(X_0)=nu_0$ and $\me((X_0/n-u_0)^2)=n^{-2}\mathrm{Var}(X_0)=|u|(1-|u|)/n$. Together with $K^2=K+\sum_{i\ne j}1_{\{X_iX_j>0\}}$ it follows that
	\[ \me(\widetilde{Y}^2)\ =\ \frac{u_0(1-u_0)}{n}\ +\ \frac{2\me(X_0K)}{n^2}\ -\ \frac{2u_0\me(K)}{n}\ +\ \frac{\me(K)}{n^2}\ +\ \frac{1}{n^2}\sum_{i\ne j}\pr(X_iX_j>0).
\]
Adding and subtracting $u_0(1-u_0)/n=|u|(1-|u|)/n$ leads to
\[ \me(\widetilde{Y}^2)\ =\ 2\bigg(\frac{\me(X_0K)}{n^2}-\frac{u_0\me(K)}{n}+\frac{u_0(1-u_0)}{n}\bigg)\ +\ \frac{1}{n}\bigg(\frac{\me(K)}{n}-|u|\bigg)\ +\ \frac{|u|^2}{n}\ +\ \frac{1}{n^2}\sum_{i\ne j}\pr(X_iX_j>0).
	\]
	We have $\me(K)=\sum_{i\ge 1}(1-(1-u_i)^n)$ and $\me(X_0K)=nu_0\sum_{i\ge 1}(1-(1-u_i)^{n-1})$.
	Moreover, by Bernoulli's inequality, $1-(1-u_i)^{n-1}\le (n-1)u_i$ for $i\in\nz$. We conclude that
	\[	 \frac{\me(X_0K)}{n^2}\ -\ \frac{u_0\me(K)}{n}\ +\ \frac{u_0(1-u_0)}{n}\ =\ \frac{u_0}{n}\sum_{i\ge 1}u_i(1-(1-u_i)^{n-1})\ \le\    \frac{(n-1)u_0}{n}(u,u)\ \le\ |u|^2.
	\]
	Also, $n^{-1}\me(K)-|u|=n^{-1}\sum_{i\ge 1}(1-(1-u_i)^n-nu_i)\le 0$. From the generalized Bernoulli inequality $1-(1-u_i)^n-(1-u_j)^n+(1-u_i-u_j)^n\le n(n-1)u_iu_j$, $i,j\in\nz$, it follows that
	\[
		\frac{1}{n^2}\sum_{i\ne j}\pr(X_iX_j>0)\ =\ \frac{1}{n^2}\sum_{i\ne j}(1-(1-u_i)^n-(1-u_j)^n+(1-u_i-u_j)^n)\ \le\ \sum_{i,j\ge 1}u_iu_j\ =\ |u|^2.
	\]
    Collecting all bounds it follows that $\me(\widetilde{Y}^2)$ is bounded by $4|u|^2$, which shows that the first claim holds with $D_1:=4$. Concerning the second claim, note that
	\[
		0\ \le\ \me((Y/n-1)^2)\ =\ \me((\widetilde{Y}-|u|)^2) \ =\ \me(\widetilde{Y}^2)\ -\ 2|u|\me(\widetilde{Y})\ +\ |u|^2
	\ \le\ \me(\widetilde{Y}^2)\ +\ |u|^2,
	\]
	since $\me(\widetilde{Y})=n^{-1}\me(K)\ge 0$, showing that we can choose $D_2:=D_1+1=5$.
\hfill$\Box$\end{proof}
The following result is needed in the proof of Theorem \ref{thm_fix_line}.
\begin{proposition}
	Let $\alpha>0$ and $L:[1,\infty)\to(0,\infty)$ be slowly varying with $0<\inf_{y\in[1,K]}L(y)\le\sup_{y\in[1,K]}L(y)<\infty$ for any $K>1$. Then $\lim_{x\to\infty}\inf_{y\ge x^{-1}\vee1}x^{\alpha}L(xy)/L(y)=\infty$ and $\lim_{x\to0}\sup_{y\ge x^{-1}\vee1}x^{\alpha}L(xy)/L(y)=0$.
\label{prop_app_slow_variation}
\end{proposition}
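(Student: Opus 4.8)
The plan is to reduce the statement to a single uniform growth estimate for slowly varying functions (Potter's theorem) and to observe that the two displayed limits are equivalent after an elementary substitution. The hypotheses on $L$ are precisely that $L$ is slowly varying and bounded away from $0$ and $\infty$ on every compact subset of $[1,\infty)$, so Potter's bound \cite[Theorem 1.5.6 (ii)]{binghamgoldieteugels} is available (if one wants the theorem verbatim, first extend $L$ to $[0,\infty)$ by $L(y):=L(1)$ for $y\in[0,1)$, which changes neither slow variation nor local boundedness): for $\delta:=\alpha/2>0$ there is a constant $A\ge1$ such that $L(v)/L(u)\le A\max\{(v/u)^\delta,(v/u)^{-\delta}\}$ for all $u,v\ge1$.

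Using this, I would first treat the limit as $x\to\infty$, for which $x^{-1}\vee1=1$. Fix $x\ge1$ and $y\ge1$; then $xy\ge y\ge1$, and applying the bound with $u:=xy$, $v:=y$ gives $L(y)/L(xy)\le A\max\{x^{-\delta},x^{\delta}\}=Ax^{\delta}$, hence $x^{\alpha}L(xy)/L(y)\ge A^{-1}x^{\alpha-\delta}=A^{-1}x^{\alpha/2}$. Since this bound does not depend on $y$, taking the infimum over $y\ge1$ and letting $x\to\infty$ yields the first assertion. For the limit as $x\to0$, where $x^{-1}\vee1=x^{-1}$, I would argue symmetrically: for $0<x\le1$ and $y\ge x^{-1}$ one has $xy\ge1$ and $y\ge x^{-1}\ge1$, so the bound with $u:=y$, $v:=xy$ gives $L(xy)/L(y)\le A\max\{x^{\delta},x^{-\delta}\}=Ax^{-\delta}$, whence $x^{\alpha}L(xy)/L(y)\le Ax^{\alpha/2}$ uniformly in $y\ge x^{-1}$, which tends to $0$.

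I note that the second limit can alternatively be deduced from the first: substituting $x=1/z$ and $y=zw$ (under which the constraint $y\ge x^{-1}\vee1$ becomes $w\ge1$) one gets $x^{\alpha}L(xy)/L(y)=z^{-\alpha}L(w)/L(zw)=\bigl(z^{\alpha}L(zw)/L(w)\bigr)^{-1}$, so $\sup_{y\ge x^{-1}\vee1}x^{\alpha}L(xy)/L(y)=\bigl(\inf_{w\ge1}z^{\alpha}L(zw)/L(w)\bigr)^{-1}\to0$ as $z\to\infty$ by the first part. I do not anticipate any genuine difficulty; the only point requiring a little care is the alignment of the hypotheses of Potter's theorem with the domain $[1,\infty)$, which is handled by the trivial extension of $L$ mentioned above, and every remaining step is a one-line estimate.
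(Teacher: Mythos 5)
Your proof is correct. It reaches the same quantitative conclusion as the paper --- a uniform bound of the form $x^{\alpha}L(xy)/L(y)\ge A^{-1}x^{\alpha/2}$ (resp.\ $\le Ax^{\alpha/2}$) independent of $y$ --- but by a slightly different route: you invoke Potter's theorem \cite[Theorem 1.5.6 (ii)]{binghamgoldieteugels} as a black box, whereas the paper re-derives exactly such a bound from the Karamata representation theorem, writing $L(x)=\delta(x)\exp(\int_1^x \varepsilon(u)u^{-1}\,{\rm d}u)$ with $\|\varepsilon\|\le\alpha/2$ and then estimating $x^{\alpha}L(xy)/L(y)=(\delta(xy)/\delta(y))\exp(\int_1^x(\alpha+\varepsilon(uy))u^{-1}\,{\rm d}u)\ge x^{\alpha/2}\inf\delta(xy)/\delta(y)$. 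The two arguments are substantively the same (Potter's bounds are themselves proved from the representation theorem by the paper's computation), so what your version buys is brevity and a cleaner appeal to a named standard result; what the paper's version buys is self-containedness at the level of the representation theorem, which it already cites elsewhere. Your handling of the hypotheses is careful in the right place: the local boundedness assumption in the proposition is precisely what is needed for the uniform version (ii) of Potter's theorem, and the constant extension of $L$ to $[0,1)$ legitimately aligns the domains. The closing observation that the $x\to0$ limit follows from the $x\to\infty$ limit by the substitution $x=1/z$, $y=zw$ is a nice economy not present in the paper, which instead runs the symmetric estimate a second time.
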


\begin{proof}
	By the representation theorem for slowly varying functions \cite[Theorem 1.3.1]{binghamgoldieteugels}, there exist functions $\delta:[1,\infty)\to(0,\infty)$ and $\varepsilon:[1,\infty)\to\rz$ with $\lim_{x\to\infty}\delta(x)=:d\in(0,\infty)$ and $\lim_{x\to\infty}\varepsilon(x)=0$ such that $L(x)=\delta(x)\exp(\int_{1}^{x}\frac{\varepsilon(u)}{u}{\rm d}u)$, $x\ge1$. Furthermore we can choose $\varepsilon$ such that $\|\varepsilon\|\le\alpha/2$, if $\delta$ is adapted accordingly. By the additional boundary assumption for $L$ and the convergence of $\delta$ to $d\in(0,\infty)$, $0<\inf_{y\ge1}\delta(y)\le\sup_{y\ge1}\delta(y)<\infty$. Thus, $0<\inf_{y\ge x^{-1}\vee1}\delta(xy)/\delta(y)\le \sup_{y\ge x^{-1}\vee1}\delta(xy)/\delta(y)<\infty$. From
	\[
		x^{\alpha}\frac{L(xy)}{L(y)}\ =\ \frac{\delta(xy)}{\delta(y)}\exp\bigg(\int_{1}^{x}\frac{\alpha}{u}\,{\rm d}u+\int_{y}^{xy}\frac{\varepsilon(u)}{u}\,{\rm d}u\bigg)\ =\ \frac{\delta(xy)}{\delta(y)}\exp\bigg(\int_{1}^{x}\frac{\alpha+\varepsilon(uy)}{u}\,{\rm d}u\bigg)
	\]
	it follows that
	\[
		\inf_{y\ge x^{-1}\vee1}x^{\alpha}\frac{L(xy)}{L(y)}\ \ge\ \inf_{y\ge x^{-1}\vee1}\frac{\delta(xy)}{\delta(y)}\exp\bigg(\int_{1}^{x}\frac{\alpha}{2u}\,{\rm d}u\bigg)\ =\ x^{\alpha/2}\inf_{y\ge x^{-1}\vee1}\frac{\delta(xy)}{\delta(y)}\ \to\ \infty
	\]
	as $x\to\infty$ and
	\[
		\sup_{y\ge x^{-1}\vee1}x^{\alpha}\frac{L(xy)}{L(y)}\ \le\  x^{\alpha/2}\sup_{y\ge x^{-1}\vee1}\frac{\delta(xy)}{\delta(y)}\ \to\ 0
	\]
	as $x\to0$.
\hfill$\Box$\end{proof}

\begin{remark}
	The function $L_t^{\#}:[1,\infty)\to(0,\infty)$, defined via $L_t^{\#}(x):=w(x,t)/x^{e^{\kappa t}}$, $x\ge1$, is slowly varying. Due to $w(x,t)\ge x$, it holds that $L_t^{\#}(x)\ge x^{1-e^{\kappa t}}$ for $x\ge1$ and $t\ge0$, and since $L_t^{\#}$ is continuous, Proposition \ref{prop_app_slow_variation} applies.
\end{remark}


\footnotesize

\end{document}